\newcommand{\be}{\begin{equation}}
\newcommand{\ee}{\end{equation}}
\newcommand{\ben}{\begin{eqnarray*}}
\newcommand{\een}{\end{eqnarray*}}
\newcommand{\ds}{\displaystyle}
\newcommand{\R}{\mathbb R}
\newtheorem{theorem}{Theorem}[section]
\newtheorem{lemma}[theorem]{Lemma}
\newtheorem{proposition}[theorem]{Proposition}
\newtheorem{remark}[theorem]{Remark}
\definecolor{darkgreen}{rgb}{0.09, 0.45, 0.27}
\definecolor{debianred}{rgb}{0.84, 0.04, 0.33}
\numberwithin{equation}{section}
\begin{document}
\title{\vskip-0.3in Steady-states of the Gierer-Meinhardt system in exterior domains} 

\author[ ]{Marius Ghergu$^{1,2}$}

\author[ ]{Jack McNicholl$\,^{1}$}

\affil[ ]{$^1$School of Mathematics and Statistics}
\affil[ ]{University College Dublin }
\affil[ ]{Belfield Campus, Dublin 4, Ireland}
\affil[ ]{E-mail: {\tt marius.ghergu@ucd.ie}}
\affil[ ]{E-mail: {\tt jack.mcnicholl@ucdconnect.ie}}
\affil[ ]{}
\affil[ ]{$^2$Institute of Mathematics Simion Stoilow of the Romanian Academy}
\affil[ ]{21 Calea Grivitei St., 010702 Bucharest, Romania}

%\date{ 6 February, 2019 }

\maketitle

\begin{abstract} 
We discuss the existence and nonexistence of solutions to the steady-state Gierer-Meinhardt system
$$
\begin{cases}
\displaystyle   -\Delta u=\frac{u^p}{v^q}+\lambda \rho(x) \,, u>0 &\quad\mbox{ in }\mathbb{R}^N\setminus K,\\[0.1in]
\displaystyle   -\Delta v=\frac{u^m}{v^s}  \,, v>0 &\quad\mbox{ in }\mathbb{R}^N\setminus K,\\[0.1in]
\displaystyle \;\;\; \frac{\partial u}{\partial \nu}=\frac{\partial v}{\partial \nu}=0 &\quad\mbox{ on }\partial K,\\[0.1in]
\displaystyle \;\;\;  u(x), v(x)\to 0 &\quad\mbox{ as }|x|\to \infty,
\end{cases}
$$
where $K\subset \mathbb{R}^N$ $(N\geq 2)$ is a compact set, $\rho\in C^{0,\gamma}_{loc}(\overline{\mathbb{R}^N\setminus K})$, $\gamma\in (0,1)$, is a nonnegative function  and  $p,q,m,s, \lambda>0$.  Combining fixed point arguments with suitable barrier functions, we construct solutions with a prescribed asymptotic growth at infinity.  Our approach can be extended to many other classes of semilinear elliptic systems with various sign of exponents. 
\end{abstract}

\noindent{\bf Keywords: Gierer-Meinhardt system; steady-state solutions; existence and nonexistence} 

\medskip

\noindent{\bf 2020 AMS MSC: 35J47, 35B45, 35J75, 35B40} 
%\newpage

%\tableofcontents

\section{Introduction}

%CVPao \cite[Section 3.2]{P1992} 

Alan Turing (1912 –1954) was a British scientist credited for developing the first modern computer, artificial intelligence and for breaking the Enigma code during World War II. One of his scientific legacies pertains to the chemical basis of morphogenesis, that is, a biological process that causes a tissue or organ to develop its shape through the spatial distribution of its cells.
Turing's research \cite{T52} was related to  pattern formation due to the breakdown of symmetry and homogeneity in
initially homogeneous continuous media. He  concluded that small variances in chemical concentrations
can amplify and act as symmetry-breaking in the biological development process and thus, they may give rise to patterns. Mathematically, this process is described by two reaction-diffusion equations where the unknowns quantify the  concentration of two chemicals.  Let us point out that patterns may result from different processes such as cell polarity (Lawrence \cite{L66}) or the presence of two periodic
events featuring different wavelengths (Goodwin and Cohen \cite{GC69}).

Based on Turing’s idea,  Gierer and Meinhardt \cite{GM72} proposed in 1972 a mathematical model for pattern formation that emphasizes the existence of two antagonist chemicals: there is a short-range autocatalytic activator on one hand, and a long-range, cross-catalytic inhibitor on the other hand, see Figure 1. 

\begin{figure}[h!]
\begin{center}
\includegraphics[width=11cm]{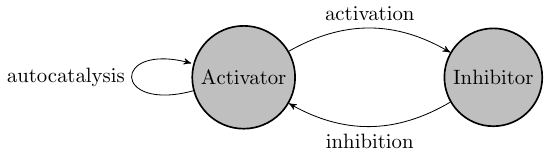}
\caption{The activator-inhibitor interdependence in the Gierer-Meinhardt model.}
\end{center}
\label{ai}
\end{figure}

Their mathematical model, also expressed as a reaction-diffusion system, reads as follows:
$$
\begin{cases}
\ds a_t-d_1\Delta a=-\mu_a a+\rho \frac{a^p}{h^q}+\rho_a,\\[0.4cm]
\ds h_t-d_2\Delta h=-\mu_h h +\rho \frac{a^m}{h^s},\\[0.2cm]
\end{cases}
$$
where:
\begin{itemize}
\item $a$ and $h$ represent the concentration of the activator and inhibitor respectively.
\item $a_t$ and $b_t$ account for the change of activator and inhibitor in time unit.
\item $\rho$ is the self-production rate of the activator.
\item the nonlinear terms $\ds \frac{a^p}{h^q}$ and $\ds \frac{a^m}{h^s}$ describe the production rates of the activator and inhibitor in the process of tissue formation. They depend on the nonlinear terms $a^p$ and $a^m$, being slowed down by the inhibitor input $\ds 1/h^q$ and $\ds 1/h^s$ respectively. It is assumed that
$p>1$ and $q,m, s>0$.

The above system is said to have a common source for activator and inhibitor if $(p,q)=(m,s)$.
\item $\mu_a$, $\mu_h$ represent the decay rates of the two chemicals.
\item $\rho_a$ is a small activator-independent production rate of the activator and is required to initiate the activator autocatalysis at very low activator concentration.

\item $d_1$ and $d_2$ are the diffusion rates of the activator and inhibitor respectively. The activator diffuses slowly while the inhibitor has a fast diffusion, so $d_1<<d_2$.

\end{itemize}
From mathematical point of view, the Gierer-Meinhardt system has been widely investigated in many settings and contexts \cite{DKZ21, GT14, G09, H24,  I20, J06, KBM21, KS17, KPW20, KPW21, K10, H15}. More recent results are related to the Gierer-Meinhardt system in unbounded domains \cite{DKW03, G23, G24, GDW24,  KWY13}. For instance, 
Kolokolnikov, Wei and  Yang \cite{KWY13} discussed the existence of positive solutions in $\R^3$ to the following  system:
\begin{equation}\label{WW1}
\begin{cases}
\displaystyle   -\varepsilon \Delta u+ u=\frac{u^2}{v}&\quad\mbox{ in }\R^3,\\[0.1in]
\displaystyle   -\Delta v+ v=u^2 &\quad\mbox{ in }\R^3,\\[0.1in]
u(x), v(x)\to 0& \mbox{ as }|x|\to \infty.
\end{cases}
\end{equation}
One of the main results of \cite{KWY13} is the existence of positive solutions to \eqref{WW1} for $\varepsilon>0$ sufficiently small. Such solutions are radial and  decay exponentially at infinity. Similar results in the plane are obtained  in \cite{DKW03}.
The following system related to \eqref{WW1} was studied in \cite{G23}:
$$
\begin{cases}
\displaystyle   -\Delta u+ \lambda u=\frac{u^p}{v^q}+\rho(x)&\quad\mbox{ in }\R^N, N\geq 3,\\[0.1in]
\displaystyle   -\Delta v+ \mu v=\frac{u^m}{v^s} &\quad\mbox{ in }\R^N,\\[0.1in]
u(x), v(x)\to 0& \mbox{ as }|x|\to \infty,
\end{cases}
$$
where $\rho\in C(\R^N)$, $\lambda, \mu\geq 0$ and $p,q,m,s>0$. Various type of solutions with either exponential or power type decay were obtained in \cite{G23}. This study was further continued in \cite{G24} where the Gierer-Meinhardt system is considered in the upper half-space. The approach in \cite{G23, G24} relies heavily on the integral representation formulae of solutions to some semilinear PDEs in $\R^N$ and on various integral estimates. 

In the present work we use a different approach to investigate qualitative properties of solutions to the steady-state Gierer-Meinhardt system in exterior domains. Precisely, we are concerned with the existence and nonexistence of positive solutions to the system 
\begin{equation}\label{GM0}
\begin{cases}
\displaystyle   -\Delta u=\frac{u^p}{v^q}+\lambda \rho(x) \,, u>0 &\quad\mbox{ in }\R^N\setminus K \, (N\geq 2),\\[0.1in]
\displaystyle   -\Delta v=\frac{u^m}{v^s}  \,, v>0 &\quad\mbox{ in }\R^N\setminus K,\\[0.1in]
\displaystyle \;\;\; \frac{\partial u}{\partial \nu}=\frac{\partial v}{\partial \nu}=0 &\quad\mbox{ on }\partial K,\\[0.1in]
\displaystyle \;\;\;  u(x), v(x)\to 0 &\quad\mbox{ as }|x|\to \infty,
\end{cases}
\end{equation}
where $\rho\in C^{0,\gamma}_{loc}(\overline{\R^N\setminus K})$ is a nonnegative function, $\gamma\in (0,1)$ and $p,q,m,s, \lambda>0$.  We are looking for positive classical solutions of \eqref{GM0}, that is, functions $u, v\in C^2(\overline{\R^N\setminus K})$ such that $u, v>0$ and satisfy \eqref{GM0} at every point of $\overline{\R^N\setminus K}$. 

In the above problem,  $K\subset \R^N$ $(N\geq 2)$ is a compact set that satisfies the following two conditions:

\begin{enumerate}
\item[{\rm (K1)}] $\R^N\setminus K$ is connected and there exists $r_0>0$ such that $B_{r_0}\subset K$.
\item[{\rm (K2)}] We have 
\begin{equation}\label{eqbd}
\nu(x)\cdot x\leq 0\quad\mbox{ for all }x\in \partial K,
\end{equation}
where $\nu(x)$ denotes the inner normal vector to $\partial K$ at the point $x\in \partial K$.
\end{enumerate}
As it can be easily seen, closed balls or ellipsoids whose interior  contains the origin satisfy the above condition \eqref{eqbd}. More generally, if $K$ is described geometrically by $K=\{x\in \R^N: f(x)\leq 0\}$ with a continuously differentiable function $f:\R^N\to \R$, then   \eqref{eqbd} holds provided that $\nabla f(x)\cdot x\leq 0$ for all $x\in \partial K$. 

From the biological point of view, the homogeneous Neumann conditions on $\partial K$ in \eqref{GM0} indicate that there is no exchange with the exterior of $\R^N\setminus K$. This means that the concentrations $u$ and $v$ have no effect on $K$ which is a region with a different structure, such as a hard tissue, e.g., a  bone.

\section{Main results}

Our first main result provides sufficient conditions for which \eqref{GM0} has no positive solutions. 
\begin{theorem}{\rm (Nonexistence)}\label{th1}

The system \eqref{GM0} has no positive solutions in any of the following situations:

\begin{enumerate}
\item[{\rm (i)}] $N=2$;

\item[{\rm (ii)}] $N\geq 3$ and $\ds m\leq \frac{2}{N-2}$;

\item[{\rm (iii)}] $N\geq 3$ and $\ds 0<p\leq \frac{N}{N-2}$.
\end{enumerate}
\end{theorem}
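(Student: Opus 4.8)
The plan is to exploit the Neumann boundary condition and condition (K2) to run comparison/integral arguments on exterior spherical shells, treating each non-existence case by deriving a contradiction from the decay $u,v\to 0$ at infinity. The unifying device is the following: because $\partial u/\partial\nu = 0$ on $\partial K$ and $-\Delta u = u^p/v^q + \lambda\rho \geq 0$, the function $u$ is a positive superharmonic function on $\R^N\setminus K$ with vanishing Neumann data on the compact boundary; the same holds for $v$. One then studies the spherical averages
$$
\bar u(r) = \frac{1}{|\partial B_r|}\int_{\partial B_r} u\,dS, \qquad \bar v(r) = \frac{1}{|\partial B_r|}\int_{\partial B_r} v\,dS,
$$
for $r$ large (say $r \geq R_0$ with $K \subset B_{R_0}$), where these integrals are over full spheres. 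Integrating $-\Delta u \geq 0$ over the shell $B_r \setminus K$ and using the divergence theorem together with (K2) (which forces the boundary flux term on $\partial K$ to have a favourable sign) shows that $r^{N-1}\bar u'(r)$ is monotone, so $\bar u(r)$ behaves like a superharmonic radial function: in dimension $N=2$ this already clashes with $\bar u(r)\to 0$, since a nonnegative superharmonic function on an exterior planar domain that is not identically zero cannot tend to $0$ at infinity (the logarithmic growth of the fundamental solution). This disposes of case (i).

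For cases (ii) and (iii) with $N\geq 3$, the strategy is to obtain a lower bound on the decay rate of $v$ (resp.\ $u$) and then feed it into the equation for the other component to reach a contradiction. First I would use the maximum principle / Green's representation on the exterior domain: since $-\Delta v = u^m/v^s > 0$ and $v\to 0$, the function $v$ is bounded below by a multiple of the exterior-domain Green's potential of $u^m/v^s$, which decays like $|x|^{2-N}$; hence there is $c>0$ with $v(x) \geq c\,|x|^{2-N}$ is \emph{not} automatic — rather one gets $v(x) \leq C|x|^{2-N}$ from superharmonicity plus decay, i.e.\ an \emph{upper} bound $v(x) \le C|x|^{2-N}$. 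Combined with $v>0$ and continuity, on $\partial B_r$ one has $\int_{\partial B_r} u^m/v^s \,dS \geq c\, r^{-(N-1)} \big(\int u\big)^m / \big(\sup v\big)^s$ after Jensen, and the key point in (ii) is that $m \leq 2/(N-2)$ makes the resulting integral $\int^\infty r^{N-1}\cdot r^{-m(N-2)}\,dr$ diverge, forcing $\bar v$ to blow up rather than decay — contradiction. Case (iii) is the mirror image: from $v \le C|x|^{2-N}$ we get $u^p/v^q \geq c\, u^p |x|^{q(N-2)}$, and combined with the upper bound $u(x)\le C|x|^{2-N}$ this makes $-\Delta u = u^p/v^q + \lambda\rho \geq c\,|x|^{-p(N-2)+q(N-2)} $ still too large to be integrable against $r^{N-1}$ when $p \leq N/(N-2)$ (note $p(N-2)\le N$), again contradicting the decay of $\bar u(r)$ via the monotonicity of $r^{N-1}\bar u'(r)$.

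Concretely, the backbone computation is: multiply the $u$-equation by a suitable test function, or more simply observe that $\phi(r) := r^{N-1}\bar u'(r)$ satisfies $\phi'(r) = -r^{N-1}\,\overline{\Delta u}(r) + (\text{boundary contribution on }\partial K \text{ with sign fixed by (K2)}) \le -r^{N-1}\overline{(u^p/v^q + \lambda\rho)}(r) \le 0$ once $r \ge R_0$, so $\phi$ is nonincreasing; integrating twice and using $\bar u(r)\to 0$ pins down $\bar u(r) \ge c\int_r^\infty s^{1-N}\big(\int_{R_0}^s t^{N-1}\overline{(u^p/v^q)}(t)\,dt\big)ds$, and the divergence of the relevant integral in each of cases (i)--(iii) contradicts $\bar u(r)\to 0$ (resp.\ $\bar v(r)\to 0$). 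The main obstacle I anticipate is making the boundary term on $\partial K$ genuinely harmless: the spherical shells $B_r\setminus B_{R_0}$ do not see $\partial K$, so one should instead integrate over $B_r \setminus K$ and carefully track that $\int_{\partial K}\partial_\nu u\,dS = 0$ (exactly, by the Neumann condition), which is clean — the real subtlety is rather the bootstrapping of the pointwise bounds $u, v \le C|x|^{2-N}$ from the mere hypothesis $u,v\to 0$, which requires a Harnack-type argument on dyadic annuli to convert spherical-average decay into pointwise decay, and then the self-improving estimate that turns the crude bound into the sharp exponent needed to land exactly at the thresholds $m = 2/(N-2)$ and $p = N/(N-2)$.
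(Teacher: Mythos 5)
Your overall strategy — reduce to a scalar differential inequality for the spherical average and integrate — is in the same spirit as the paper, which packages the spherical-average argument into its Lemma~\ref{a1} and, for case (i), uses the same superharmonicity estimate you describe (this is the paper's Lemma~\ref{lest}(i), cited from \cite{SZ2002}). Case (i) of your proposal is fine. However, cases (ii) and (iii) contain genuine gaps.

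For case (ii), the criterion you single out is off. The integral that must diverge for the contradiction is $\int^\infty r\, A(r)\,dr$, not $\int^\infty r^{N-1}A(r)\,dr$. With $A(r)=r^{-m(N-2)}$, your integral $\int^\infty r^{N-1-m(N-2)}\,dr$ diverges whenever $m\leq \frac{N}{N-2}$, but positive solutions do exist for $\frac{2}{N-2}<m$ (Theorem~\ref{th2}), so divergence of that integral cannot on its own force a contradiction. It is the criterion $\int^\infty r\,A(r)\,dr=\infty$, i.e.\ $m(N-2)\leq 2$, that is sharp; this is exactly what the paper encodes in Lemma~\ref{a1}(i). Your double-integral ``backbone'' at the end is actually the correct computation and does yield the right threshold after the outer integration, so this is a mislabelling rather than a fatal error, but the single integral you point to is not the decisive one.

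For case (iii) the gap is substantive. You assert an upper bound $v\leq C|x|^{2-N}$ ``from superharmonicity plus decay'', but superharmonicity gives a \emph{lower} bound, not an upper one; for example $v(x)=|x|^{-\varepsilon}$ with $\varepsilon$ small is positive, superharmonic on an exterior domain, and decays to zero, yet decays strictly slower than $|x|^{2-N}$. All that decay gives is $v\leq M$ for a constant $M$. Moreover, your conclusion $-\Delta u\geq c|x|^{-p(N-2)+q(N-2)}$ appears to lower-bound $u^p$ using an \emph{upper} bound on $u$, which goes the wrong way. With only $v\leq M$, the inequality becomes $-\Delta u\geq M^{-q}u^p$, and the crude bound $u\geq c|x|^{2-N}$ then yields $-\Delta u\geq c'|x|^{-p(N-2)}$, whose $\int r\,A(r)\,dr$ test only rules out $p\leq \frac{2}{N-2}$ — not the stated threshold $p\leq \frac{N}{N-2}$. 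To reach the sharp exponent one needs a self-improving bootstrap (or the Keller--Osserman-type machinery); you acknowledge this at the very end but do not carry it out, and the exponents you write never land at $\frac{N}{N-2}$. The paper circumvents the bootstrap entirely by observing $v\leq M$ reduces (iii) to the nonexistence of positive supersolutions of $-\Delta u\geq cu^p$ in an exterior domain, for which it cites \cite[Theorem 2.1]{AS11}.

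A minor point: condition (K2) plays no role in the divergence-theorem step, since the Neumann condition $\partial u/\partial\nu=0$ on $\partial K$ already annihilates the boundary flux exactly; (K2) is used elsewhere in the paper, in the construction of barriers (Proposition~\ref{a2}).
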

We next discuss the existence of a positive solution $(u,v)$ to \eqref{GM0} if $N\geq 3$. We have seen in Theorem \ref{th1}(iii) that \eqref{GM0} has no solutions if $N\geq 3$ and $\ds 0<p\leq \frac{N}{N-2}$. Thus, we shall assume $p> \frac{N}{N-2}$ 
and let us introduce the quantity 
$$
\sigma:=\frac{mq}{(p-1)(1+s)}>0
$$

To be more precise in our approach, we shall assume that $\rho(x)$ has the following behaviour at infinity:
\begin{equation}
\label{ro}
C_1 |x|^{-k}\leq \rho(x)\leq C_2 |x|^{-k} \quad\mbox{ in }\R^N\setminus K\quad\mbox{ for some }k>0,
\end{equation}
where $C_2>C_1>0$ are fixed constants. 

Given two positive functions $f,g\in C(\R^N\setminus K)$ we shall use the symbol $f(x)\simeq g(x)$ in $\R^N\setminus K$ to express the fact there exist $C>c>0$ such that 
$$
c\leq \frac{f(x)}{g(x)}\leq C\quad\mbox{ for all } x\in \R^N\setminus K.
$$
If $N\geq 3$ and $(u,v)$ is a positive solution of \eqref{GM0}, we have that both $u$ and $v$ are positive and superharmonic in $\R^N\setminus K$. Thus, by a known result, we have $u, v\geq c|x|^{2-N}$ in $\R^N\setminus K$, for some constant $c>0$ (see Lemma \ref{lest} below). Our first goal is to investigate the existence of positive solutions $(u,v)$ to \eqref{GM0} where the activator concentration $u(x)$ has a minimal growth at infinity, that is, 
\begin{equation}\label{beh1}
u(x)\simeq |x|^{2-N}\quad\mbox{ in }\R^N\setminus K.
\end{equation}

Our result reads as follows:
\begin{theorem}{\rm (Solutions with minimal growth in the activator concentration)}\label{th2}

Assume $N\geq 3$, $\sigma<1$ and $\rho(x)$ satisfies \eqref{ro}. If the system \eqref{GM0} has a positive solution $(u,v)$ with $u$ satisfying \eqref{beh1}, then $k>N$ and exactly one of the following conditions hold:

\begin{enumerate}
\item[{\rm (i)}]  $m\geq s+\frac{N}{N-2}$ and $p>q+\frac{N}{N-2}$;

\item[{\rm (ii)}] $m= s+\frac{N}{N-2}$, $p=q+\frac{N}{N-2}$ and $q>1+s$;

\item[{\rm (iii)}] $\frac{2}{N-2}<m< s+\frac{N}{N-2}$ and $p>\frac{q}{1+s}\Big(m-\frac{2}{N-2}\Big)+\frac{N}{N-2}$.
\end{enumerate}

Conversely, if $k>N$ and $p,q,m,s$ satisfy one of the above conditions (i)-(iii), then there exists $\lambda^*>0$ such that the system \eqref{GM0} has a positive solution $(u,v)$ for all $0<\lambda<\lambda^*$, where the activator's concentration $u(x)$ satisfies \eqref{beh1}
and the inhibitor's concentration satisfies 
\begin{equation}\label{beh2}
v(x)\simeq \begin{cases}
\ds |x|^{2-N} & \mbox{ if } m\geq s+\frac{N}{N-2} \mbox{ and } p>q+\frac{N}{N-2},\\[0.2cm]
\ds |x|^{2-N}\log^{\frac{1}{1+s}} \Big(\frac{|x|}{r_0}\Big)  & \mbox{ if } m= s+\frac{N}{N-2}\, , \; p=q+\frac{N}{N-2} \mbox{ and } q>1+s,\\[0.2cm]
\ds |x|^{-\frac{m(N-2)-2}{1+s}} & \mbox{ if }\frac{2}{N-2}<m< s+\frac{N}{N-2}\, ,\;  p>\frac{q}{1+s}\Big(m-\frac{2}{N-2}\Big)+\frac{N}{N-2}.
\end{cases}
\end{equation}
\end{theorem}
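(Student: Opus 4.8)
\textbf{Proof proposal for Theorem \ref{th2}.}

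The plan is to split the argument into the necessity part (deriving the dichotomy on $p,q,m,s$ and the constraint $k>N$ from the existence of a solution with $u\simeq|x|^{2-N}$) and the sufficiency part (constructing such a solution by a fixed point scheme with explicit barriers). For the necessity part, I would start from a putative positive solution $(u,v)$ with $u(x)\simeq|x|^{2-N}$. Feeding this into the second equation, $-\Delta v = u^m/v^s \simeq |x|^{-m(N-2)}v^{-s}$, and using the known lower bound $v\geq c|x|^{2-N}$ from Lemma \ref{lest}, one gets that $v$ is superharmonic with a controlled source, so by integrating the Laplacian against the fundamental solution on annuli $B_{2R}\setminus B_R$ one extracts the exact decay rate of $v$: either $v\simeq|x|^{2-N}$ (when the source is integrable at infinity against $|x|^{2-N}$, i.e.\ $m(N-2)+s(N-2)>N$, equivalently $m>s+\frac{N}{N-2}$), the borderline logarithmic case $m=s+\frac{N}{N-2}$, or the slowly-decaying regime $v\simeq|x|^{-\frac{m(N-2)-2}{1+s}}$ obtained by balancing $-\Delta v\simeq |x|^{-\beta}$ with $v\simeq|x|^{2-\beta}$ where $\beta = m(N-2)+s(N-2-\beta)$; the condition $m>\frac{2}{N-2}$ (already known from Theorem \ref{th1}(ii)) is exactly what makes this exponent admissible, and $\sigma<1$ guarantees the self-consistency of the iteration $v\mapsto u^m/v^s$. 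Plugging the resulting $v$-decay back into the first equation and demanding that the source $u^p/v^q$ decay fast enough for $-\Delta u\simeq|x|^{-N}$ to be consistent with $u\simeq|x|^{2-N}$ yields the three inequalities on $p$ in (i)--(iii); finally, the term $\lambda\rho(x)$ must not destroy this, which forces $\int_{|x|>1}\rho(x)\,|x|^{2-N}\,dx<\infty$, i.e.\ $k>N$ by \eqref{ro}.

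For the sufficiency part I would fix $k>N$ and one of the regimes (i)--(iii), and set up a fixed point argument on the cone of pairs $(u,v)$ of continuous positive functions on $\overline{\R^N\setminus K}$ trapped between explicit sub- and supersolutions. The barrier for $u$ is $\simeq|x|^{2-N}$ and the barrier for $v$ is the corresponding expression in \eqref{beh2}; the key computation is to verify that, for $w$ of the prescribed decay, the Newtonian-type potential
$$
(\mathcal{T}w)(x) = \int_{\R^N\setminus K} G(x,y)\, F(y,w(y))\,dy
$$
— with $G$ the Green function of $-\Delta$ on $\R^N\setminus K$ with homogeneous Neumann data on $\partial K$, whose existence and Neumann-compatible decay $G(x,y)\simeq|x-y|^{2-N}$ for large arguments follows from (K1)--(K2) (the condition $\nu(x)\cdot x\le0$ is exactly the sign condition that makes $|x|^{2-N}$ a subsolution respecting the Neumann boundary inequality) — again has the same decay. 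One then defines the map $(u,v)\mapsto(\bar u,\bar v)$ by solving the two equations successively via $G$, checks that it maps the order interval into itself using the decay bookkeeping from the necessity part (this is where the strict inequalities in (i)--(iii) and $k>N$ are used, to get room for the constant), checks monotonicity/compactness (Arzel\`a--Ascoli plus interior elliptic estimates, using $\rho\in C^{0,\gamma}_{loc}$ for $C^2$ regularity up to $\partial K$), and invokes Schauder's fixed point theorem. Smallness of $\lambda$ enters to keep the supersolution for $u$ valid: the $\lambda\rho$ contribution to $\mathcal{T}$ is $\lambda\int G(x,y)\rho(y)\,dy\simeq\lambda|x|^{2-N}$ since $k>N$, so choosing $\lambda<\lambda^*$ keeps the total below the chosen supersolution constant.

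The main obstacle I anticipate is the construction and control of the Green function $G$ for the exterior Neumann problem, together with the verification that the prescribed power (or power-times-log) profiles are genuine super/subsolutions \emph{including} the boundary inequality $\partial_\nu(\cdot)\leq0$ or $\geq0$ on $\partial K$: this is precisely where hypothesis (K2) does its work, since $\partial_\nu|x|^{2-N} = (2-N)|x|^{-N}\,(\nu(x)\cdot x)$ has the right sign exactly when $\nu(x)\cdot x\le0$. A secondary delicate point is the borderline case (ii), where the logarithmic correction must be tracked through both equations simultaneously and the condition $q>1+s$ is what keeps the feedback $u^p/v^q$ from overwhelming the $|x|^{-N}$ target; getting the exponent $\frac{1}{1+s}$ on the log right requires carefully integrating $-\Delta v\simeq|x|^{-N}v^{-s}$ with $v\simeq|x|^{2-N}\log^{\alpha}$ and solving $\alpha(1+s)=1$. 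The rest — interior Schauder estimates, Arzel\`a--Ascoli, and the passage from the integral equation to the classical PDE — is routine once the barriers are in place.
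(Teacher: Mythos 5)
Your necessity argument follows the same skeleton as the paper's: feed $u\simeq|x|^{2-N}$ into the second equation, extract the decay of $v$ via the singular elliptic problem $-\Delta v\simeq|x|^{-m(N-2)}v^{-s}$ (paper's Lemma \ref{a3}), and then feed back into the first equation to constrain $p,q$ and $k$. (A small slip: you wrote ``$m(N-2)+s(N-2)>N$'' as the threshold, whereas the correct condition from Lemma \ref{a3} is $m(N-2)>N+s(N-2)$; you recover $m>s+\tfrac N{N-2}$ anyway, so this reads as a typo.) The paper obtains the $v$-decay by comparison against explicit power and power-log barriers respecting the Neumann condition, not by pairing against the fundamental solution on annuli, but that is a stylistic difference.

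The sufficiency part is where you diverge genuinely, and where there is a gap. You propose to write the fixed-point map as a Newtonian-type potential with the Neumann Green function $G$ of $\R^N\setminus K$, and to rely on $G(x,y)\simeq|x-y|^{2-N}$ for large arguments. You assert that this estimate ``follows from (K1)--(K2),'' but this is unproved and is exactly the issue the paper deliberately avoids: the authors remark that the integral representation used in their earlier $\R^N$ and half-space work ``does not provide a convenient tool here due to the general boundary of the compact set $K$.'' Condition (K2) gives you the sign of $\partial_\nu|x|^{2-N}$ on $\partial K$, which is what you need to make $|x|^{2-N}$-type profiles super/subsolutions with the right Neumann inequality, but it does not by itself produce two-sided bounds on a Green function for a general compact $K$. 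The paper circumvents this by working entirely with sub/supersolutions and the comparison Lemmas \ref{com1}--\ref{com2}, and by building the auxiliary barriers in Proposition \ref{a2} (three preliminary steps: a supersolution from an ODE integral, a Dirichlet subsolution by monotone approximation, then a Neumann subsolution) before ever touching the fixed point argument. If you want to keep your Green-function route you would need to actually construct $G$ with the claimed decay for arbitrary $K$ satisfying (K1)--(K2), which is a nontrivial standalone task.

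A second, smaller gap is compactness for the Schauder argument. You say ``Arzelà--Ascoli plus interior elliptic estimates,'' but $\mathcal{H}$ acts on functions defined on the unbounded domain $\R^N\setminus K$, and the paper explicitly notes that Schauder cannot be applied there directly. The paper's fix is an exhaustion by $B_n\setminus K$: it defines truncated order intervals $\mathcal{A}_n$ and maps $\mathcal{H}_n$, proves $\mathcal{H}_n(\mathcal{A}_n)\subset\mathcal{A}_n$ and compactness of each $\mathcal{H}_n$, obtains a fixed point $(u_n,v_n)$ on each $B_n\setminus K$ by Schauder, and passes to a diagonal subsequence via uniform $W^{2,r}$ bounds. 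Your outline needs either this kind of exhaustion or an explicit argument that the decay bounds built into the order interval give the required precompactness in a weighted space; as written, the step is incomplete. Finally, the explicit $\lambda^*$ and the bookkeeping inequality $C_4(E^pF^{-q}+\lambda C_2)\le E$ (which uses $\sigma<1$ in an essential way) are where the hypothesis $\sigma<1$ actually enters on the existence side; your sketch gestures at ``smallness of $\lambda$'' but does not surface where $\sigma<1$ is used.
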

The asymptotic behaviour of $u$ and $v$ described in \eqref{beh1} and \eqref{beh2} reveals that if condition (ii) or (iii) in Theorem \ref{th2} holds, then $v(x)>u(x)$ for $|x|$ large. 

If condition (i) in Theorem \ref{th2} holds,  the asymptotics \eqref{beh1} and \eqref{beh2} yield $v(x)\simeq u(x)$ as $|x|\to \infty$. It will emerge from our proof of Theorem \ref{th2} that for $\lambda>0$ small, we have $u(x)>v(x)$ for $|x|$ large, see Remark \ref{remm}.

In the next result we construct positive solutions $(u,v)$ of \eqref{GM0} where $u(x)$ has a faster growth at  infinity than the fundamental solution $|x|^{2-N}$.

\newpage

\begin{theorem}{\rm (Solutions with faster growth  in the activator concentration)}\label{th3}

Assume $N\geq 3$, $\sigma<1$ and $\rho(x)$ satisfies \eqref{ro} with $2<k<N$. Let $a=k-2$, so that $0<a<N-2$.  Then, the system \eqref{GM0} has a positive solution $(u,v)$ with 
\begin{equation}\label{beh3}
u(x)\simeq |x|^{-a} \quad\mbox{ in }\R^N\setminus K,
\end{equation}
for some $\lambda>0$, if and only if  exactly one of the following conditions hold:

\begin{enumerate}
\item[{\rm (i)}] $m\geq \frac{N+s(N-2)}{a}$  and $p\geq q\frac{N-2}{a}+1+\frac{2}{a}$;

\item[{\rm (ii)}] $\frac{2}{a}<m< \frac{N+s(N-2)}{a}$ and $p\geq \frac{q}{1+s}\Big(m-\frac{2}{a}\Big)+1+\frac{2}{a}$.
\end{enumerate} 

Furthermore, if one of (i) or (ii) above holds and $(u,v)$ is a solution of \eqref{GM0} for some $\lambda>0$, then the inhibitor's concentration $v(x)$ satisfies
\begin{equation}\label{beh4}
v(x)\simeq \begin{cases}
\ds |x|^{2-N} & \quad \mbox{ if } m>\frac{N+s(N-2)}{a} \\[0.2cm]
\ds |x|^{2-N}\log^{\frac{1}{1+s}} \Big(\frac{|x|}{r_0}\Big)  &\quad \mbox{ if } m=\frac{N+s(N-2)}{a} \\[0.2cm]
\ds |x|^{-\frac{ma-2}{1+s}} &\quad \mbox{ if } \frac{2}{a}<m< \frac{N+s(N-2)}{a}
\end{cases}
\quad\mbox{ in }\R^N\setminus K.
\end{equation}
\end{theorem}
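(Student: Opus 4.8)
\medskip

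\noindent\textbf{Plan of proof.} I treat the two implications separately.

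\noindent\emph{Necessity and the formula \eqref{beh4}.} Let $(u,v)$ be a positive solution of \eqref{GM0} for some $\lambda>0$ with $u\simeq|x|^{-a}$. The first task is to determine $v$. Since $v$ is positive, superharmonic and $v(x)\to0$, Lemma \ref{lest} gives $v(x)\ge c\,|x|^{2-N}$. Feeding $u\simeq|x|^{-a}$ into the second equation gives the two-sided bound $c_1|x|^{-ma}v^{-s}\le-\Delta v\le c_2|x|^{-ma}v^{-s}$ on $\R^N\setminus K$. The relevant self-similar exponent is $b:=(ma-2)/(1+s)$, and $b\in(0,N-2)$ exactly when $2/a<m<(N+s(N-2))/a$. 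I would compare $v$ with super-solutions built from $|x|^{-b}$, $|x|^{2-N}$, or $|x|^{2-N}\log^{1/(1+s)}(|x|/r_0)$ (with suitable lower-order corrections in the last two, harmonic-dominated regimes), according to whether $ma$ is less than, greater than, or equal to $N+s(N-2)$. Condition (K2) is precisely what makes these admissible: it forces $\partial_\nu(|x|^{-\alpha})=-\alpha|x|^{-\alpha-2}(\nu\cdot x)\ge0$ on $\partial K$, the sign the comparison/Hopf argument needs for an upper barrier in the Neumann problem. This gives the upper bound in \eqref{beh4}; in the range $2/a<m<(N+s(N-2))/a$ a bootstrap recovers the matching lower bound, since inserting $v\le B|x|^{-b}$ into $-\Delta v\ge c_1|x|^{-ma}v^{-s}$ linearises the right-hand side to a constant times $|x|^{-(b+2)}$ and then $v$ is compared with an auxiliary solution of $-\Delta w=c'|x|^{-(b+2)}$. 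This already forces $m>2/a$, as $b\le0$ is incompatible with $v(x)\to0$.

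\noindent\emph{From \eqref{beh4} to conditions (i)--(ii).} Let $\beta$ be the decay exponent of $v$ from \eqref{beh4}, i.e.\ $\beta=N-2$ when $m\ge(N+s(N-2))/a$ (the logarithm being lower order) and $\beta=b$ otherwise. As $0<a<N-2$ we have $-\Delta u\simeq|x|^{-(a+2)}=|x|^{-k}$, while the first equation gives $u^pv^{-q}+\lambda\rho\simeq|x|^{-\mu}$ with $\mu=\min\{k,\;ap-\beta q\}$, using \eqref{ro}. Matching against \eqref{beh3} via the decay theory for $-\Delta w=h$, $w\to0$, on $\R^N\setminus K$ forces $\mu=k$, i.e.\ $ap-\beta q\ge a+2$. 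Putting $\beta=N-2$ yields condition (i) and $\beta=b$ yields condition (ii); the two $m$-ranges being disjoint, exactly one of them holds, and \eqref{beh4} is the formula just established.

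\noindent\emph{Sufficiency.} Now assume (i) or (ii), so in particular $p>1$. I would obtain $(u,v)$ from Schauder's fixed point theorem on the convex set $\mathcal{C}=\{u\in C(\overline{\R^N\setminus K}):\underline u\le u\le\overline u\}$ with the operator $T$ that maps $u$ to the solution $w$ of $-\Delta w=u^pv^{-q}+\lambda\rho$ (zero Neumann data on $\partial K$, $w\to0$), where $v=v(u)$ is the solution of $-\Delta v=u^mv^{-s}$ with the same boundary behaviour, solved for each admissible $u$ between ordered sub/super-solutions. Natural barriers are $\underline u\simeq\lambda|x|^{-a}$ --- take the solution of $-\Delta\underline u=\lambda C_1|x|^{-k}$ with zero Neumann data, so the lower sub-solution inequality is automatic from \eqref{ro} --- and $\overline u=A|x|^{-a}$, together with $\underline v=c\,\eta$, $\overline v=B\,\eta$ for a positive $\eta\simeq$ the right-hand side of \eqref{beh4} (again with lower-order corrections where $\eta$ is harmonic-dominated). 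After the $\simeq$-computations, the sub/super-solution inequalities for the coupled system reduce to the exponent relations encoded in \eqref{beh4}, the inequality $ap-\beta q\ge a+2$ from (i)/(ii), and a few constraints among $A,B,c,\lambda$; these last are simultaneously solvable precisely because $\sigma<1$, equivalently $(p-1)/q>m/(1+s)$, which opens the needed gap between sub- and super-solution constants and pins $\lambda$ to a nonempty range. Elliptic $C^{0,\gamma}_{loc}$-Schauder estimates make $T$ continuous and compact on $\mathcal{C}$, so $T$ has a fixed point $(u,v)$, a positive classical solution of \eqref{GM0}; it satisfies \eqref{beh3}--\eqref{beh4} by the sandwiching, the logarithmic regime being produced with barriers carrying the extra $\log^{1/(1+s)}$ factor.

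\noindent\emph{Main difficulty.} I expect the crux to be the barrier construction: producing sub- and super-solutions that are simultaneously compatible with the Neumann condition on $\partial K$ (where (K1)--(K2) enter), reproduce the exact three-regime behaviour of \eqref{beh4} including the logarithmic borderline, and close the coupled system of differential inequalities --- the matching of the numerical constants across the two equations being exactly where $\sigma<1$ and the threshold conditions (i)--(ii) get consumed. Once suitable barriers are available, the Schauder step and the necessity analysis are standard maximum-principle and elliptic-regularity arguments.
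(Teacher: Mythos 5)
Your proposal is correct and follows essentially the same strategy as the paper: for necessity you feed $u\simeq|x|^{-a}$ into the second equation and invoke the asymptotics of the singular sublinear problem (the paper's Lemma~\ref{a3}) to pin down $v$ and hence $\eqref{beh4}$, then plug back into the first equation and match decay exponents via Lemma~\ref{a4}/\ref{a5} to derive (i)/(ii); for sufficiency you set up a Schauder fixed point between power-type barriers, with $\sigma<1$ ensuring the constants close. The only cosmetic differences are that you iterate on $u$ alone (treating $v=v(u)$ as solved) whereas the paper iterates on the pair $(u,v)$, and that you handle the logarithmic borderline $m=(N+s(N-2))/a$ informally where the paper uses a small-$\varepsilon$ comparison; neither changes the substance, and the compactness issue on the unbounded exterior domain (which you gloss over with "Schauder estimates make $T$ compact") is resolved in the paper by the exhaustion $\mathcal{A}_n$ on $B_n\setminus K$ plus a diagonal argument, a step you would need to spell out to make the fixed point rigorous.
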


The remaining part of the article is organised as follows. In Section 3 we obtain various results related to the existence, nonexistence and asymptotic behaviour at infinity for the single equation 
$$
\begin{cases}
-\Delta w  = |x|^{-\alpha}  w^{-s}, w>0 & \quad\mbox{ in }\R^N\setminus K , \\[0.2cm]
\ds \;\;\; \frac{\partial w}{\partial \nu}  =0  & \quad\mbox{ on }\partial K , \\[0.2cm]
\;\;\; w(x)\to 0 &  \quad\mbox{ as }|x|\to \infty.
\end{cases}
$$
One crucial result in this sense is Proposition \ref{a2} which discusses the qualitative properties of the above problem in a more general setting. Unlike the approach in \cite{G23, G24} which relies heavily on integral representation of solutions and various integral estimates, we shall study \eqref{GM0} by constructing appropriate sub and supersolutions that fit into the geometry of the exterior domain $\R^N\setminus K$. Let us point out that integral representation of solutions which appeared in \cite{G23, G24} does not provide a convenient tool here due to the general boundary of the compact set $K$. The proofs of Theorems \ref{th1}-\ref{th3} are detailed separately in Sections 4, 5 and 6 respectively. One particular  advantage of our approach is that one can extend the study of \eqref{GM0} to include negative ranges of exponents $p,q,m,s$. This will be done in Section 7. Throughout this paper, by $c,C$ we denote generic positive constants whose values (unless otherwise stated) may change on every occurrence.

\section{Preliminaries}

In this section we collect various results needed in our approach. We start with the following known fact which provides basic estimates for positive superharmonic functions in exterior domains. 

\begin{lemma}{\rm (see \cite[Lemma 2.3]{SZ2002}) } \label{lest}
Let $\Omega\subset \R^N$ $(N\geq 2)$ be such that $\R^N\setminus B_{R}\subset \Omega$ for some $R>0$. Assume $u\in C^2(\overline \Omega)$ is positive and satisfies $-\Delta u\geq 0$ in $\Omega$. 
\begin{enumerate}
\item[{\rm (i)}] If $N=2$, then there exists $c=c(R, u)>0$ such that $u\geq c$ in $\Omega$;
\item[{\rm (ii)}] If $N\geq 3$, then there exists $C=C(N, R, u)>0$ such that $u\geq C|x|^{2-N}$ in $\Omega$.

\end{enumerate}
\end{lemma}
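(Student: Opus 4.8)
The plan is to prove this classical estimate by comparing $u$ from below with explicit radial harmonic barriers on annuli $\{R<|x|<\rho\}$ and then sending $\rho\to\infty$. First I would reduce to the exterior region: since $\{|x|\ge R\}\subset\Omega$ and $u\in C^2(\overline\Omega)$ is positive, the number $m_R:=\min_{|x|=R}u$ is well defined and strictly positive, and it suffices to prove the asserted bound on $\{|x|>R\}$. Indeed, the remaining set $\Omega\cap B_R$ is bounded, $u$ is continuous and strictly positive on its compact closure $\overline{\Omega\cap B_R}$ and hence bounded below there by a positive constant; since this set stays at a positive distance from the origin in the setting at hand (the applications of the lemma remove a ball $B_{r_0}$), the factor $|x|^{2-N}$ is also bounded on it, so the two pieces combine after shrinking the constant.

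For $N\ge3$, given $\rho>R$ I would take the harmonic function on $A_\rho:=\{R<|x|<\rho\}$ that equals $m_R$ on $\{|x|=R\}$ and $0$ on $\{|x|=\rho\}$, namely
\[
h_\rho(x)=m_R\,\frac{|x|^{2-N}-\rho^{2-N}}{R^{2-N}-\rho^{2-N}},
\]
which is nonnegative on $A_\rho$. On $\partial A_\rho$ one has $u\ge m_R=h_\rho$ on the inner sphere and $u>0=h_\rho$ on the outer sphere, while $u-h_\rho$ is superharmonic in $A_\rho$; the minimum principle then gives $u\ge h_\rho$ throughout $A_\rho$. Fixing $x$ with $|x|>R$ and letting $\rho\to\infty$, so that $\rho^{2-N}\to0$, yields $u(x)\ge m_R R^{N-2}|x|^{2-N}$, which is the desired inequality with $C=m_R R^{N-2}$ on $\{|x|>R\}$.

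For $N=2$ the only modification is that the radial harmonic function on $A_\rho$ with the same boundary values is logarithmic,
\[
h_\rho(x)=m_R\,\frac{\log(\rho/|x|)}{\log(\rho/R)}\ge0\quad\text{in }A_\rho,
\]
and the same comparison gives $u\ge h_\rho$ in $A_\rho$. Here, for fixed $x$ with $|x|>R$, one has $h_\rho(x)=m_R\,\frac{\log\rho-\log|x|}{\log\rho-\log R}\to m_R$ as $\rho\to\infty$, so $u(x)\ge m_R$; this is precisely where the non-decay of the two-dimensional fundamental solution forces a positive constant lower bound rather than a decaying one.

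The argument rests only on the minimum (comparison) principle for the superharmonic function $u-h_\rho$ on the annuli $A_\rho$, which is immediate since $u,h_\rho\in C^2(\overline{A_\rho})$, so I do not expect any real obstacle; the one structural point is choosing the correct radial barrier in each dimension, the dichotomy $N\ge3$ versus $N=2$ being exactly what produces the two different conclusions. Since the statement is quoted from \cite{SZ2002}, one could also simply cite it, but the barrier proof above is self-contained.
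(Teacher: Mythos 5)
The paper does not supply a proof of this lemma --- it is invoked as a known result from Serrin--Zou \cite[Lemma 2.3]{SZ2002} --- so there is no in-paper argument to compare against. Your barrier proof is correct and is the standard self-contained way to establish this fact: in each dimension you compare $u$ from below on the annulus $\{R<|x|<\rho\}$ with the radial harmonic function that equals $m_R:=\min_{|x|=R}u$ on the inner sphere and $0$ on the outer sphere, apply the minimum principle to the superharmonic difference $u-h_\rho$, and then let $\rho\to\infty$; the dichotomy between the constant lower bound for $N=2$ and the decaying bound $m_R R^{N-2}|x|^{2-N}$ for $N\ge 3$ falls out of the different fundamental solutions exactly as you describe. (The alternative route one often sees, including in Serrin--Zou, passes through the spherical average $\bar u(r)$ and the monotonicity of $r^{N-1}\bar u'(r)$; that is essentially equivalent, and your pointwise barrier version is, if anything, more transparent.) The one place you are rightly careful is the patch on $\Omega\cap B_R$: the conclusion $u\ge C|x|^{2-N}$ for $N\ge3$ is only meaningful when $\Omega$ stays away from the origin, which holds here because the paper's hypothesis (K1) forces $B_{r_0}\subset K$; your parenthetical acknowledges this, and with that caveat the reduction to $\{|x|>R\}$ is sound.
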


\begin{lemma}\label{a1}
Consider the inequality
\begin{equation}\label{ina1}
-\Delta w\geq A(|x|) g(w)\quad\mbox{ in }\R^N\setminus B_{r_0}, N\geq 2, r_0>0
\end{equation}
where $A, g\in C(0, \infty)$ are positive functions such that $g$ is convex.

Assume one of the following conditions hold:
\begin{enumerate}
\item[{\rm (i)}] $N\geq 3$, $\ds \lim_{t\to 0^+} g(t)>0$ and $\ds \int_1^\infty tA(t) dt=\infty$;  
\item[{\rm (ii)}] $N=2$, $g$ is nonincreasing\footnote{in the sense that $g(t_1)\geq g(t_2)$ for all $t_2>t_1>0$.} and for all $c>0$ we have 
$$
\ds \liminf_{t\to \infty} e^{2t} A(e^t)g(c(t+1))>0.
$$ 
\end{enumerate}
Then, \eqref{ina1} has no positive solutions $w\in C^2(\R^N\setminus B_{r_0})$.  

\end{lemma}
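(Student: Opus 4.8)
The plan is to argue by contradiction: suppose $w\in C^2(\R^N\setminus B_{r_0})$ is a positive solution of \eqref{ina1}, and derive a contradiction by comparing $w$ with the spherical average. First I would set $\bar w(r)=\fint_{\partial B_r} w\,dS$ for $r\ge r_0$, which is a positive $C^2$ function of $r$. Averaging the inequality \eqref{ina1} over spheres and using $\fint_{\partial B_r}\Delta w = \frac{1}{r^{N-1}}\big(r^{N-1}\bar w'(r)\big)'$ together with Jensen's inequality (this is where convexity of $g$ is needed, to get $\fint g(w)\ge g(\bar w)$), I obtain the radial differential inequality
$$
-\frac{1}{r^{N-1}}\big(r^{N-1}\bar w'(r)\big)'\ \ge\ A(r)\,g(\bar w(r))\qquad\text{for }r>r_0.
$$
Since the right-hand side is positive, $r^{N-1}\bar w'(r)$ is strictly decreasing; I would then show $\bar w'(r)<0$ for $r$ large (if $r^{N-1}\bar w'$ stayed nonnegative, integrating would force $\bar w$ to increase and contradict $w(x)\to 0$), so $\bar w$ is eventually decreasing and, being positive, has a finite limit $\ell\ge 0$ as $r\to\infty$; in fact $\ell=0$ by the boundary condition at infinity. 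Integrating $r^{N-1}\bar w'(r)$ once gives a lower bound for $-\bar w'$, and integrating again produces the asymptotic lower bounds that will clash with the divergence hypotheses.

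In case (i), $N\ge 3$: since $\bar w$ decreases to $0$ and $g$ is continuous with $\lim_{t\to 0^+}g(t)=:g_0>0$, there is $r_1\ge r_0$ with $g(\bar w(r))\ge g_0/2$ for $r\ge r_1$. Then $\big(r^{N-1}\bar w'(r)\big)'\le -\tfrac{g_0}{2}\,r^{N-1}A(r)$, and integrating from $r_1$ to $R$ gives $R^{N-1}\bar w'(R)\le r_1^{N-1}\bar w'(r_1)-\tfrac{g_0}{2}\int_{r_1}^R t^{N-1}A(t)\,dt$. I would like the integral $\int_1^\infty t^{N-1}A(t)\,dt$ to diverge, but the hypothesis only gives $\int_1^\infty tA(t)\,dt=\infty$; for $N\ge 3$ we have $t^{N-1}\ge t$ for $t\ge 1$, so $\int_1^\infty t^{N-1}A(t)\,dt=\infty$ as well. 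Hence $R^{N-1}\bar w'(R)\to-\infty$, so $\bar w'(R)\le -c\,R^{1-N}$ for large $R$ with $c>0$ growing without bound; more precisely, choosing $R_2$ so that $r_1^{N-1}\bar w'(r_1)-\tfrac{g_0}{2}\int_{r_1}^{R_2}t^{N-1}A(t)\,dt\le -1$, we get $\bar w'(R)\le -R^{1-N}$ for all $R\ge R_2$, and integrating from $R_2$ to $\infty$ gives $\bar w(R_2)-\lim_{R\to\infty}\bar w(R)=\bar w(R_2)\ge \int_{R_2}^\infty R^{1-N}\,dR$, which is finite for $N\ge 3$ — so this alone is not yet a contradiction. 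The sharper route: keep the full bound $\bar w'(R)\le R^{1-N}\big(r_1^{N-1}\bar w'(r_1)-\tfrac{g_0}{2}\int_{r_1}^R t^{N-1}A(t)\,dt\big)$ and integrate over $[R_2,\infty)$; the dominant term is $-\tfrac{g_0}{2}\int_{R_2}^\infty R^{1-N}\big(\int_{r_1}^R t^{N-1}A(t)\,dt\big)dR$, and by Tonelli (swap the order, $\int_{r_1}^\infty t^{N-1}A(t)\big(\int_{\max(t,R_2)}^\infty R^{1-N}dR\big)dt = \tfrac{1}{N-2}\int_{r_1}^\infty t^{N-1}A(t)\max(t,R_2)^{2-N}dt \ge \tfrac{1}{N-2}\int_{R_2}^\infty tA(t)\,dt=\infty$). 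This forces $\bar w(R_2)=+\infty$, the desired contradiction.

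In case (ii), $N=2$: now $g$ is positive, continuous and nonincreasing. The radial inequality reads $-\big(r\bar w'(r)\big)'\ge r A(r) g(\bar w(r))$. As before $r\bar w'(r)$ is decreasing and eventually negative; integrating, $\bar w(r)\ge c\log r$ would be the wrong direction, so instead I use that $\bar w$ is \emph{bounded above} (it tends to $0$) to get an upper bound $\bar w(r)\le c(t+1)$ is not quite it — rather, I would first establish the standard fact that a positive superharmonic $\bar w$ in the plane with $\bar w\to 0$ forces $\bar w(r)\le C/\log r$ eventually, but more usefully I integrate the inequality directly. From $-\big(r\bar w'(r)\big)'\ge rA(r)g(\bar w(r))$ and $r\bar w'(r)\to L\le 0$, integrating from $r$ to $\infty$: $r\bar w'(r)\ge L + \int_r^\infty sA(s)g(\bar w(s))\,ds$; since the integrand is positive this just says $r\bar w'(r)\ge L$, not helpful directly. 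The productive move is: because $\bar w'(r)<0$ eventually and $r\bar w'(r)$ is decreasing to some $L\le 0$, if $L<0$ then $\bar w'(r)\le L/2r$ for large $r$, so $\bar w(r)\le \bar w(r_1)+\tfrac{L}{2}\log(r/r_1)\to-\infty$, contradicting positivity; hence $L=0$, and then $-\bar w'(r)=\tfrac1r\int_r^\infty sA(s)g(\bar w(s))\,ds$. Integrating once more over $[r_1,\infty)$ and swapping order gives $\bar w(r_1)=\int_{r_1}^\infty \tfrac1r\Big(\int_r^\infty sA(s)g(\bar w(s))\,ds\Big)dr=\int_{r_1}^\infty sA(s)g(\bar w(s))\log(s/r_1)\,ds$. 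Substituting $s=e^t$, $\bar w(r_1)=\int_{\log r_1}^\infty e^{2t}A(e^t)g(\bar w(e^t))(t-\log r_1)\,dt$, and using $\liminf_{t\to\infty}e^{2t}A(e^t)g(c(t+1))>0$ with the monotone upper bound $\bar w(e^t)\le c_0(t+1)$ — which itself follows from $\bar w(e^t)=\bar w(r_1)-\int_{\log r_1}^t (e^\tau \bar w'(e^\tau))\,d\tau$ and... — gives $g(\bar w(e^t))\ge g(c_0(t+1))$ by monotonicity, hence the integrand is $\ge \delta(t-\log r_1)$ for large $t$, making the integral diverge: contradiction.

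The main obstacle I anticipate is the bookkeeping in case (ii): producing the right linear-in-$\log r$ upper bound $\bar w(r)\le c\log(r/r_0)$ (or $\bar w(e^t)\le c(t+1)$) for the superharmonic average so that the monotonicity of $g$ can be applied in the form $g(\bar w)\ge g(c(t+1))$, and then matching this with the $\liminf$ hypothesis. In case (i) the delicate point is passing from the weighted hypothesis $\int tA(t)\,dt=\infty$ to a genuine contradiction via Tonelli rather than stopping at the (insufficient) first integration. In both cases the convexity of $g$ enters only once, through Jensen's inequality to pass from \eqref{ina1} to its spherical average, and everything after that is a one-dimensional ODE-comparison argument.
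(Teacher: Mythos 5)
Your basic strategy — spherical average, Jensen via convexity of $g$, then one–dimensional ODE comparison — is exactly the paper's. But as written the argument has a genuine gap: you repeatedly invoke a boundary condition ``$w(x)\to 0$ as $|x|\to\infty$'' that is \emph{not} part of the hypotheses of Lemma \ref{a1}. The lemma rules out \emph{all} positive $C^2$ solutions on $\R^N\setminus B_{r_0}$, with no assumption on behaviour at infinity, so you may not conclude $\bar w\to 0$ or ``$\bar w'(r)<0$ for $r$ large'' from a decay condition.

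In case (i) this matters because you use $\bar w\to 0$ to get $g(\bar w(r))\ge g_0/2$ for large $r$. The paper obtains the needed uniform lower bound on $g(\bar w)$ by changing variables $t=r^{2-N}$, $W(t)=\bar w(r)$, observing that $-W''\ge 0$ forces $W$ to be concave and hence \emph{bounded} on $(0,t_0]$, and then using $\lim_{t\to 0^+}g(t)>0$ together with positivity and continuity of $g$. You can reach the same conclusion without the change of variable: $\big(r^{N-1}\bar w'\big)'\le 0$ gives $\bar w'(r)\le C r^{1-N}$, and $\int^\infty r^{1-N}dr<\infty$ for $N\ge 3$ yields boundedness of $\bar w$ — but this step is missing in your write-up. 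Once that is repaired, your Tonelli computation recovers precisely the paper's integration-by-parts identity and the contradiction with $\int_1^\infty tA(t)\,dt=\infty$, so the remainder of (i) is sound.

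Case (ii) is more seriously confused. Without the decay assumption, the correct conclusion from $L:=\lim r\bar w'(r)=0$ and $r\bar w'(r)\ge\int_r^\infty sA(s)g(\bar w(s))\,ds$ is that $\bar w'\ge 0$, the \emph{opposite} of your claim ``$\bar w'(r)<0$ eventually''; accordingly the identity $-\bar w'(r)=\tfrac1r\int_r^\infty\!\cdots$ has the wrong sign and is an inequality, not an equality, and your final integral representation for $\bar w(r_1)$ does not follow. What you actually need — and what the paper gets immediately from concavity of $W(t)=\bar w(e^t)$ — is only the linear upper bound $\bar w(r)\le c\,(1+\log r)$, which follows from $r\bar w'(r)$ being nonincreasing, hence $\bar w'(r)\le C/r$. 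With that in hand, $g$ nonincreasing gives $g(\bar w(e^t))\ge g(c(t+1))$ and the $\liminf$ hypothesis yields $-W''(t)\ge \delta>0$ for large $t$; integrating twice forces $W\to-\infty$, contradicting positivity. That clean route avoids the integral representation you attempted. In short: same strategy as the paper, but you should delete all appeals to decay at infinity, derive boundedness (case (i)) and the linear-in-$\log$ bound (case (ii)) from the decreasing flux $r^{N-1}\bar w'$ alone, and fix the sign error in case (ii).
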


\begin{proof}
Suppose $w\in C^2(\R^N\setminus B_{r_0})$ is a positive solution of \eqref{ina1} and denote by $\overline  w$ its spherical average, that is,
$$
\overline w(r)=\frac{1}{|\partial B_r| }\int_{\partial B_1 } w(y) dS(y) \quad\mbox{ for all }r\geq r_0,
$$
where  $dS$ stands for the surface area measure in $\R^N$. Taking the average in \eqref{ina1} and using Jensen's inequality (since $g$ is convex) we find
\begin{equation}\label{ina2}
-\overline w''(r)-\frac{N-1}{r} \overline w'(r)\geq A(r) g(\overline w(r))\quad\mbox{ for all }r\geq r_0.
\end{equation}

(i) Using the change of variable $t=r^{2-N}$ and $W(t)=\overline w(r)$, from \eqref{ina2} we find
$$
-W''(t)\geq \frac{1}{(N-2)^2} t^{-\frac{2N-2}{N-2}} A\left(t^{-\frac{1}{N-2}}\right) g(W(t))\quad\mbox{ for all }0<t\leq t_0:=r_0^{2-N}.
$$
From the above estimate we derive that $W$ is positive and concave on $(0, t_0]$, hence bounded. Thus, since $\lim_{t\to 0^+} g(t)>0$, we derive  $g(W(t))>c>0$ for some $c>0$ and then 
$$
-W''(t)\geq C t^{-\frac{2N-2}{N-2}} A\left(t^{-\frac{1}{N-2}}\right) \quad\mbox{ for all }0<t\leq t_0.
$$
Integrating twice over $[t, t_0]$ we find
$$
\begin{aligned}
W(t_0)& \geq W(t)+W'(t_0)(t_0-t)+\int_t^{t_0}\int_\rho^{t_0} \tau^{-\frac{2N-2}{N-2}} A\left(\tau^{-\frac{1}{N-2}}\right) d\tau d\rho\\
& \geq W'(t_0)(t_0-t)+\int_t^{t_0}\int_\rho^{t_0} \tau^{-\frac{2N-2}{N-2}} A\left(\tau^{-\frac{1}{N-2}}\right) d\tau d\rho
 \quad\mbox{ for all }0<t\leq t_0.
\end{aligned}
$$
Letting $t\to 0^+$ in the above estimate we derive
$$
\infty>W(t_0)\geq t_0 W'(t_0) +\int_0^{t_0}\int_\rho^{t_0} \tau^{-\frac{2N-2}{N-2}} A\left(\tau^{-\frac{1}{N-2}}\right) d\tau d\rho.
$$
An integration by parts in the above estimate yields
$$
\begin{aligned}
\infty&>\int_0^{t_0}\int_\rho^{t_0} \tau^{-\frac{2N-2}{N-2}} A\left(\tau^{-\frac{1}{N-2}}\right) d\tau d\rho\\[0.2cm]
&=\rho \int_\rho^{t_0} \tau^{-\frac{2N-2}{N-2}} A\left(\tau^{-\frac{1}{N-2}}\right) d\tau\Bigg|_{\rho=0}^{\rho=t_0}+\int_0^{t_0}\rho^{-\frac{N}{N-2}} A\left(\rho^{-\frac{1}{N-2}}\right) d\rho\\[0.2cm]
&=\int_0^{t_0}\rho^{-\frac{N}{N-2}} A\left(\rho^{-\frac{1}{N-2}}\right) d\rho\\[0.2cm]
&=(N-2)\int_{r_0}^\infty r A(r) dr,
\end{aligned}
$$
where, in the last equality, we use the change of variable $\rho^{-\frac{1}{N-2}}=r$. The above estimate raises a contradiction since $\ds \int_1^\infty tA(t) dt=\infty$. Thus, \eqref{ina1} has no positive solutions $w\in C^2(\R^N\setminus B_{r_0})$.  

(ii) Replacing in the following $r_0$ with $r_0+1$, we may assume $r_0>1$. With the change of variable $t=\log r$ and $W(t)=\overline w(r)$ we deduce from \eqref{ina2} that 
$$
-W''(t)\geq e^{2t}A(e^t)g(W(t)) \quad\mbox{ for all }t\geq t_0:=\log r_0.
$$
In particular $W$ is concave on $[t_0, \infty)$, so $W(t)\leq c(t+1)$ for all $t\geq t_0$, for some $c>0$. Since $g$ is nonincreasing we deduce 
$$
-W''(t)\geq e^{2t}A(e^t)g(c(t+1))\quad\mbox{ for all } t\geq t_0.
$$ 
Note that $\ds \liminf_{t\to \infty} e^{2t} A(e^t)g(c(t+1))>0$ implies the existence of $C>0$ and $\tau\geq t_0$ such that
$$
-W''(t)\geq C>0\quad\mbox{ for all } t\geq \tau.
$$ 
Integrating twice over $[\tau, t]$ we deduce
$$
-W(t)\geq \frac{C}{2}(t-\tau)^2-W'(\tau)(t-\tau)-W(\tau) \quad\mbox{ for all }t\geq \tau.
$$
This yields $\ds \lim_{t\to \infty}W(t)=-\infty$, which is a contradiction since $W$ is positive. 
\end{proof}

\begin{lemma}\label{com1}
Let $\Omega\subset \R^N$ be an open and bounded set and let $\Psi\in C(\Omega)$, $g\in C(0, \infty)$ be positive functions such that $g$ is nonincreasing.

Let also  $a,b\in C(\partial \Omega)$ be such that 
$$
a,b\geq 0, \quad (a(x), b(x))\neq (0,0)\quad\mbox{ for all }x\in \partial\Omega.
$$ 

Assume $w_1, w_2\in C^2(\Omega)\cap C^1(\overline \Omega)$ satisfy:
\begin{equation}\label{om1}
\Delta w_1+\Psi(x) g(w_1)\leq  \Delta w_2+\Psi(x) g(w_2)\quad\mbox{ in }\Omega,
\end{equation}
\begin{equation}\label{om2}
a(x) w_1+b(x)\frac{\partial w_1}{\partial \nu} \geq a(x) w_2+b(x)\frac{\partial w_2}{\partial \nu} \quad\mbox{ on }\partial\Omega,
\end{equation}
where $\nu$ denotes the outer unit normal vector at $\partial \Omega$. 
Then, $w_1\geq w_2$ in $\overline\Omega$.
\end{lemma}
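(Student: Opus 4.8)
The plan is to argue by contradiction and run the classical comparison scheme: maximum principle in the interior, Hopf's boundary lemma at a boundary extremum, and then the Robin-type boundary condition \eqref{om2} to close the loop. Set $w:=w_2-w_1\in C^2(\Omega)\cap C^1(\overline\Omega)$ and suppose $M:=\max_{\overline\Omega}w>0$ (the maximum exists since $w$ is continuous and $\overline\Omega$ is compact). Rewriting \eqref{om1} as $\Delta w\ge\Psi(x)\bigl(g(w_1)-g(w_2)\bigr)$ in $\Omega$ and observing that on the open set $P:=\{w>0\}$ one has $w_2>w_1$, hence $g(w_1)\ge g(w_2)$ by monotonicity of $g$, I obtain $\Delta w\ge 0$ in $P$; that is, $w$ is subharmonic wherever it is positive. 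This is the step that uses the sign structure of the system decisively.

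First I would show that $M$ cannot be attained in the interior. If $w(x_0)=M$ with $x_0\in\Omega$, then $x_0\in P$ and $w$, being subharmonic on $P$, attains its maximum over $P$ at the interior point $x_0$; the strong maximum principle forces $w\equiv M$ on the connected component $V$ of $P$ through $x_0$. On $V$ we then have $\Delta w_1=\Delta w_2$, so \eqref{om1} collapses to $g(w_1)\le g(w_2)$ on $V$ (using $\Psi>0$), while on $V$ we have $0<w_1<w_2$, contradicting the (strict) monotonicity of $g$. Hence $w<M$ throughout $\Omega$, and $M=w(x_1)$ for some $x_1\in\partial\Omega$. Next, since $w(x_1)=M>0$, continuity yields a ball $B\subset\Omega$ tangent to $\partial\Omega$ at $x_1$ on which $w>0$, so $\Delta w\ge 0$ in $B$ and $w<w(x_1)$ in $B$; Hopf's boundary point lemma then gives $\partial_\nu w(x_1)>0$, where $\nu$ is the outer unit normal. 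Finally, rewriting \eqref{om2} as $a(x)w(x)+b(x)\partial_\nu w(x)\le 0$ on $\partial\Omega$ and evaluating at $x_1$ gives $0\ge a(x_1)M+b(x_1)\partial_\nu w(x_1)\ge 0$, because $a(x_1),b(x_1)\ge 0$, $M>0$ and $\partial_\nu w(x_1)>0$; therefore $a(x_1)M=0$ and $b(x_1)\partial_\nu w(x_1)=0$, forcing $a(x_1)=b(x_1)=0$, which contradicts $(a(x_1),b(x_1))\ne(0,0)$. Thus $M\le 0$, i.e.\ $w_1\ge w_2$ in $\overline\Omega$.

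I expect the delicate point to be the boundary step: Hopf's lemma needs an interior ball at the boundary maximum $x_1$, so mild regularity of $\partial\Omega$ is implicitly used (harmless for the annular and exterior domains to which the lemma is later applied), and one must check that $w$ is genuinely subharmonic on that interior ball — which is again where positivity of $w$ near $x_1$, hence the monotonicity of $g$, re-enters. An alternative route sidestepping Hopf is the energy method: test \eqref{om1} with $w^+=\max(w,0)\in H^1(\Omega)\cap C(\overline\Omega)$ and integrate by parts; \eqref{om2} implies that on $\{w>0\}\cap\partial\Omega$ one necessarily has $b>0$ and $\partial_\nu w\le-\tfrac{a}{b}\,w\le 0$, so the boundary term is $\le 0$ and one gets $\int_\Omega|\nabla w^+|^2\le 0$, whence $w^+$ is constant on each component of $\Omega$ and one concludes as before using that $g$ is decreasing; this version trades the interior-ball hypothesis for enough boundary regularity to run the divergence theorem.
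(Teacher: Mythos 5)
Your proof follows the same scheme as the paper's: set $w=w_2-w_1$, observe that $w$ is subharmonic on $\{w>0\}$, rule out an interior maximum, and then close via Hopf's boundary point lemma combined with the Robin-type condition \eqref{om2} at a boundary maximum. You are in fact slightly more careful at the interior step---passing through the strong maximum principle rather than the pointwise second-derivative test---and you correctly flag that the resulting contradiction really needs $g$ strictly decreasing (the paper's proof implicitly assumes the same, writing a strict inequality in its key display that would otherwise only be nonstrict); this is harmless for every application of the lemma in the paper, since each has either $g(t)=t^{-s}$ or Dirichlet data on part of the boundary.
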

\begin{proof}
Suppose the set $\Sigma:=\{x\in \overline \Omega: w_2(x)>w_1(x)\}$ is nonempty and let $w=w_2-w_1$. Let us note first that $w$ is subharmonic on $\Sigma$ since by \eqref{om1} we have
\begin{equation}\label{om3}
-\Delta w=\Delta w_1-\Delta w_2\leq \Psi(x)\Big( g(w_2)- g(w_1)\Big)<0 \quad\mbox{ in }\Sigma.
\end{equation}
Since $\Omega$ (and thus $\Sigma$) is bounded, there exists $x_0\in \overline \Sigma$ such that $w(x_0)=\max_{\overline\Omega}w>0$. 

\noindent{\bf Case 1.} $x_0\in \Omega$. Then, $x_0$ is an interior point of $\Omega$ and thus $-\Delta w(x_0)\geq 0$. This contradicts \eqref{om3}.

\noindent{\bf Case 2.} $x_0\in \partial \Omega$. Let us note that $w=w_2-w_1$ is subharmonic in $\Sigma$ which achieves its maximum at the boundary point $x_0\in \partial \Sigma\cap \partial \Omega$. Thus, by the Hopf boundary point lemma (see \cite[Section 6.4.2]{E2010}) we have 
$$
\frac{\partial w}{\partial \nu} (x_0)>0\Longrightarrow \frac{\partial w_2}{\partial \nu} (x_0)>\frac{\partial w_1}{\partial \nu} (x_0).
$$
Now, the above inequality together with $w_2(x_0)>w_1(x_0)$ contradict \eqref{om2} at $x=x_0$.
Hence, in both the above cases we raised a contradiction, which proves that $\Sigma$ is empty and thus $w_1\geq w_2$ in $\Omega$. 
\end{proof}
The following result is a counterpart of Lemma \ref{com1} for the case of exterior domains.

\begin{lemma}\label{com2}
Let $K\subset \R^N$ be a compact set and let $\Psi\in C(\R^N\setminus K)$, $g\in C(0, \infty)$ be positive functions.
Let $a,b\in C(\partial K)$ be such that 
$$
a,b\geq 0, \quad (a(x), b(x))\neq (0,0)\quad\mbox{ for all }x\in \partial K.
$$ 
Assume $w_1, w_2\in C^2(\R^N\setminus K )\cap C^1(\overline{\R^N\setminus K})$ satisfy:
\begin{equation}\label{om5}
\Delta w_1+\Psi(x) g(w_1)\leq \Delta w_2+\Psi(x) g(w_2)\quad\mbox{ in }\R^N\setminus K,
\end{equation}
\begin{equation}\label{om6}
a(x) w_1+b(x)\frac{\partial w_1}{\partial \nu} \geq a(x) w_2+b(x)\frac{\partial w_2}{\partial \nu} \quad\mbox{ on }\partial K,
\end{equation}
where $\nu$ denotes the outer unit normal vector at $\partial (\R^N\setminus K)$ and 
\begin{equation}\label{om7}
\lim_{|x|\to \infty} w_1(x)\geq \lim_{|x|\to \infty} w_2(x). 
\end{equation}
Then, $w_1\geq w_2$ in $\R^N\setminus K$.
\end{lemma}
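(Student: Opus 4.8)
The plan is to reduce the claim to the bounded‑domain comparison principle of Lemma~\ref{com1} by exhausting $\R^N\setminus K$ with the bounded domains $\Omega_R:=B_R\setminus K$ and then letting $R\to\infty$; as in Lemma~\ref{com1}, this will use that $g$ is nonincreasing. Fix $R$ large enough that $K\subset B_R$, so that $\partial\Omega_R=\partial B_R\cup\partial K$ is a disjoint union, and set
$$
\varepsilon_R:=\max\Big\{0,\ \sup_{\partial B_R}\big(w_2-w_1\big)\Big\}\geq 0 .
$$
Since \eqref{om7} gives $\limsup_{|x|\to\infty}\big(w_2(x)-w_1(x)\big)\leq 0$, we have $\varepsilon_R\to 0$ as $R\to\infty$.

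The next step is to verify that the pair $\big(w_1+\varepsilon_R,\,w_2\big)$ satisfies the hypotheses of Lemma~\ref{com1} on $\Omega_R$, where we extend the coefficients $a,b$ to $\partial B_R$ by $a\equiv 1$, $b\equiv 0$ there; these remain continuous on the disjoint union $\partial\Omega_R$, nonnegative, and never both zero. Indeed, $\Delta(w_1+\varepsilon_R)=\Delta w_1$ and, since $g$ is nonincreasing and $\varepsilon_R\geq 0$, $g(w_1+\varepsilon_R)\leq g(w_1)$; together with $\Psi>0$ and \eqref{om5} this gives
$$
\Delta(w_1+\varepsilon_R)+\Psi\,g(w_1+\varepsilon_R)\leq \Delta w_1+\Psi\,g(w_1)\leq \Delta w_2+\Psi\,g(w_2)\quad\text{in }\Omega_R ,
$$
which is \eqref{om1}. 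For the boundary condition \eqref{om2}: on $\partial K$, using $a\geq 0$, $\varepsilon_R\geq 0$ and then \eqref{om6},
$$
a(w_1+\varepsilon_R)+b\,\frac{\partial (w_1+\varepsilon_R)}{\partial\nu}\geq a w_1+b\,\frac{\partial w_1}{\partial\nu}\geq a w_2+b\,\frac{\partial w_2}{\partial\nu} ;
$$
on $\partial B_R$ it reduces to $w_1+\varepsilon_R\geq w_2$, which holds by the definition of $\varepsilon_R$. The regularity requirements are met because $w_1,w_2\in C^2(\R^N\setminus K)\cap C^1(\overline{\R^N\setminus K})$ with $\overline{\Omega_R}\subset\overline{\R^N\setminus K}$ and $\partial B_R\subset\R^N\setminus K$, and $\Psi|_{\Omega_R}\in C(\Omega_R)$.

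Lemma~\ref{com1} then yields $w_1+\varepsilon_R\geq w_2$ on $\overline{\Omega_R}$ for every sufficiently large $R$. Fixing $x\in\R^N\setminus K$ and letting $R\to\infty$ (so $\varepsilon_R\to 0$) gives $w_1(x)\geq w_2(x)$, and by continuity this extends to $\overline{\R^N\setminus K}$, which is the assertion.

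I expect the only genuinely new ingredient, relative to Lemma~\ref{com1}, to be the control of $w_2-w_1$ near infinity — extracting $\varepsilon_R\to 0$ from \eqref{om7} — everything else being a faithful transcription of the bounded case. Alternatively, one could avoid the exhaustion and argue directly: with $\Sigma:=\{x\in\overline{\R^N\setminus K}:w_2(x)>w_1(x)\}$ assumed nonempty, \eqref{om7} forces $w_2-w_1$ to stay below $\tfrac12\sup_\Sigma(w_2-w_1)$ outside a large ball, so the positive supremum of $w:=w_2-w_1$ over $\Sigma$ is attained at a finite point $x_0$; since $w$ is subharmonic on $\Sigma$ by \eqref{om5} (again using that $g$ is nonincreasing), the cases $x_0\in\R^N\setminus K$ (interior maximum, excluded via the maximum principle together with the connectedness of $\R^N\setminus K$ in (K1)) and $x_0\in\partial K$ (Hopf's boundary point lemma against \eqref{om6}) are treated exactly as in the proof of Lemma~\ref{com1}.
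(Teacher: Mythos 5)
Your exhaustion argument is correct and essentially the same as the paper's: the paper fixes $\varepsilon>0$ first and then chooses $R$ large so that $w_1+\varepsilon\geq w_2$ outside $B_R$, whereas you let $\varepsilon_R$ depend on $R$ and show $\varepsilon_R\to 0$ from \eqref{om7}; both then invoke Lemma~\ref{com1} on $B_R\setminus K$ with the same extended coefficients $\widetilde a=1$, $\widetilde b=0$ on $\partial B_R$ and pass to the limit. You are right to flag that the step $g(w_1+\varepsilon_R)\le g(w_1)$ needs $g$ nonincreasing; that hypothesis is not written in the statement of Lemma~\ref{com2} but is implicitly in force (the paper's own proof uses it in exactly the same place, and every application of the lemma in the paper has $g$ nonincreasing).
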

\begin{proof} Let $\varepsilon\in (0,1)$. By \eqref{om7} we can find $R>0$ large enough such that $K\subset B_R$ and 
\begin{equation}\label{kkj}
w_1+\varepsilon\geq w_2 \quad\mbox{ in }\R^N\setminus B_R.
\end{equation} 
Let $\Omega=B_R\setminus K$. From \eqref{om5}, \eqref{om6} and \eqref{om7} we deduce that $\widetilde w_1=w_1+\varepsilon$ and $\widetilde w_2=w_2$ satisfy 
$$
\Delta \widetilde w_1+\Psi(x) g(\widetilde w_1)\leq  \Delta \widetilde w_2+\Psi(x) g(\widetilde w_2)\quad\mbox{ in }\Omega,
$$
and
$$
\widetilde a(x) \widetilde w_1(x)+\widetilde b(x)\frac{\partial \widetilde w_1}{\partial \nu} (x)\geq  \widetilde a(x) \widetilde w_2(x)+\widetilde b(x)\frac{\partial \widetilde w_2}{\partial \nu} (x)\quad\mbox{ on }\partial\Omega,
$$
where  $\widetilde a, \widetilde b\in C(\partial \Omega)$ are given by 
$$
\widetilde a(x)=
\begin{cases}
a(x)&\quad\mbox{ if }x\in \partial K\\[0.1cm]
1&\quad\mbox{ if }x\in \partial B_R
\end{cases}
\quad \mbox{ and }\quad 
\widetilde b(x)=
\begin{cases}
b(x)&\quad\mbox{ if }x\in \partial K\\[0.1cm]
0&\quad\mbox{ if }x\in \partial B_R
\end{cases}
.
$$
By Lemma \ref{com1} we deduce $\widetilde w_1\geq \widetilde w_2$ in $\Omega$ and thus, by the choice of $R>0$ in \eqref{kkj}, we have $w_1+\varepsilon\geq w_2$ in $\R^N\setminus K$. Passing to the limit with $\varepsilon\to 0$, we derive $w_1\geq w_2$ in $\R^N\setminus K$.
\end{proof}

\begin{proposition}\label{a2}

Let $K\subset \R^N$ $(N\geq 3)$  be a compact set which satisfies conditions $(K1)$ and $(K2)$.  Let also $\Psi\in C^{0,\gamma}_{loc}(\R^N\setminus \{0\})$, $\gamma\in (0,1)$, and $g\in C(0, \infty)$ be positive functions such that:
\begin{enumerate}
\item[{\rm (i)}] $g\in C^1(0,\infty)$ is nonincreasing; 
\item[{\rm (ii)}] $A(t):=\ds \sup_{|x|=t} \Psi(x)$ satisfies $\ds \int_1^\infty tA(t) dt<\infty$.
\end{enumerate}
Then, there exists a unique $w\in C^{2,\gamma}_{loc}(\overline{\R^N\setminus K})$  such that
\begin{equation}\label{ina3}
\begin{cases}
-\Delta w  = \Psi(x) g(w) & \quad\mbox{ in }\R^N\setminus K , \\[0.2cm]
\ds \;\;\; w>0 & \quad\mbox{ in }\overline{\R^N\setminus K} , \\[0.2cm]
\ds \;\;\; \frac{\partial w}{\partial \nu}  =0  & \quad\mbox{ on }\partial K , \\[0.2cm]
\;\;\; w(x)\to 0 &  \quad\mbox{ as }|x|\to \infty.
\end{cases}
\end{equation}

\end{proposition}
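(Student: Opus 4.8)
The plan is to construct $w$ by a sub- and supersolution argument combined with an exhaustion of $\R^N\setminus K$ by annuli, and to use Lemma~\ref{com2} for uniqueness. Set $A(t)=\sup_{|x|=t}\Psi(x)$ (a nonnegative, locally Hölder continuous function on $(0,\infty)$ which by (ii) satisfies $\int_{r_0}^\infty tA(t)\,dt<\infty$), fix $R_0>r_0$ with $K\subset B_{R_0}$, and define
$$
M(r)=\int_r^\infty t^{1-N}\Big(\int_{r_0}^t\tau^{N-1}A(\tau)\,d\tau\Big)dt,\qquad r\geq r_0.
$$
Interchanging the order of integration and using $\int_{r_0}^t\tau^{N-1}A(\tau)\,d\tau\leq t^{N-2}\int_{r_0}^\infty \tau A(\tau)\,d\tau$, one checks that $M$ is finite, positive, decreasing, of class $C^2$ on $(r_0,\infty)$, satisfies $-\Delta M=A(|x|)$, and $M(r)\to0$ as $r\to\infty$. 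Next put $\mathcal G(y)=\int_0^y ds/g(s)$: since $g$ is positive, $C^1$ and nonincreasing, $\mathcal G$ is a $C^2$ increasing bijection of $[0,\infty)$ onto itself, so $\Phi:=\mathcal G^{-1}\in C^2$ is increasing and concave (indeed $\Phi'=g(\Phi)$, hence $\Phi''=g'(\Phi)g(\Phi)\leq0$) with $\Phi(0)=0$. The claim is that $\overline w(x):=\Phi\big(M(|x|)\big)$ is a supersolution: it is positive, radial, decreasing and tends to $0$, a direct computation gives
$$
-\Delta\overline w=\Phi'(M)(-\Delta M)-\Phi''(M)|\nabla M|^2=g(\overline w)\,A(|x|)+|\Phi''(M)|\,|M'(|x|)|^2\ \geq\ g(\overline w)\,\Psi(x)\quad\text{in }\R^N\setminus K,
$$
and, since $\overline w$ is radial and decreasing while $\nu(x)\cdot x\leq0$ on $\partial K$ by (K2), $\partial\overline w/\partial\nu=\Phi'(M)\,M'(|x|)\,(x\cdot\nu)/|x|\geq0$ on $\partial K$.

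For a subsolution that decays at infinity and has the correct Neumann sign, fix $R_1>R_0$ and a radial $\theta\in C^2([r_0,\infty))$ with $0<\theta\leq1$, $\theta\equiv1$ on $[r_0,R_0]$, $\theta$ nonincreasing, and $\theta(t)=c_0t^{2-N}$ for $t\geq R_1$; set $\underline w(x)=\varepsilon\theta(|x|)$. Then $\underline w>0$, $\underline w\to0$, and $\partial\underline w/\partial\nu=0$ on $\partial K$ (because $\theta$ is constant near $\partial K\subset B_{R_0}$). Moreover $-\Delta\underline w=0$ off the collar $\{R_0\leq|x|\leq R_1\}$, while on that compact set $-\Delta\underline w\leq \varepsilon C_\theta$ and $\Psi g(\underline w)\geq c_1g(\varepsilon)$ with $c_1=\inf_{R_0\leq|x|\leq R_1}\Psi>0$; since $\varepsilon/g(\varepsilon)\to0$ as $\varepsilon\to0^+$ (whether or not $g(0^+)=+\infty$), choosing $\varepsilon$ small makes $\underline w$ a subsolution. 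By Lemma~\ref{com2} (applied with $w_1=\overline w$, $w_2=\underline w$, using $\partial_\nu\overline w\geq0=\partial_\nu\underline w$ and that both functions tend to $0$) we obtain $\underline w\leq\overline w$ in $\R^N\setminus K$.

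Now exhaust: for $R_n\to\infty$ with $K\subset B_{R_n}$ let $\Omega_n=B_{R_n}\setminus K$ and solve $-\Delta w_n=\Psi g(w_n)$ in $\Omega_n$, $\partial_\nu w_n=0$ on $\partial K$, $w_n=\overline w$ on $\partial B_{R_n}$. Here $\overline w$ and $\underline w$ are an ordered pair of super- and subsolutions of this mixed boundary value problem, and $(x,t)\mapsto\Psi(x)g(t)$ is continuous and nonincreasing in $t$ on $\overline\Omega_n\times[\min\underline w,\max\overline w]\subset\overline\Omega_n\times(0,\infty)$, so the (monotone) sub–supersolution scheme produces $w_n\in C^{2,\gamma}(\overline\Omega_n)$ with $\underline w\leq w_n\leq\overline w$, unique in this order interval (indeed globally, by Lemma~\ref{com1}). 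Comparing $w_n$ and $w_{n+1}$ on $\Omega_n$ via Lemma~\ref{com1} (they satisfy the same equation and Neumann datum, and $w_{n+1}\leq\overline w=w_n$ on $\partial B_{R_n}$) gives $w_{n+1}\leq w_n$ there, so $w_n\downarrow w$ pointwise on $\R^N\setminus K$ with $\underline w\leq w\leq\overline w$. On compact subsets of $\overline{\R^N\setminus K}$ the $w_n$ are bounded and bounded away from $0$, so $g(w_n)$ and hence $\Psi g(w_n)$ are bounded in $C^{0,\gamma}_{loc}$; interior and boundary (Neumann) Schauder estimates then upgrade the pointwise convergence to $C^2_{loc}$ convergence, whence $w\in C^{2,\gamma}_{loc}(\overline{\R^N\setminus K})$ solves $-\Delta w=\Psi g(w)$ with $\partial_\nu w=0$ on $\partial K$, and $0<\underline w\leq w\leq\overline w\to0$ forces $w>0$ and $w(x)\to0$. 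Finally, uniqueness follows at once by applying Lemma~\ref{com2} in both directions to any two solutions.

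I expect the main obstacle to be Step~1: exhibiting a supersolution that nonetheless decays at infinity, which is delicate precisely because $g(w)\to g(0^+)$ may be $+\infty$ as $w\to0$. The idea is to let the profile $\Phi=\mathcal G^{-1}$ absorb the singularity of $g$, so that $g(\overline w)A(|x|)$ becomes integrable in the radial sense, after which the concavity of $\Phi$ supplies the favourable extra term $|\Phi''(M)|\,|M'|^2\geq0$ and the supersolution inequality closes with room to spare. A secondary technical point is the elliptic regularity up to $\partial K$ needed to pass to the limit and to read off the Neumann condition, which requires $\partial K$ to be smooth enough, as is implicit in the paper's notion of a classical solution.
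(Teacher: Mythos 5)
Your proof is correct, and the supersolution construction is exactly the paper's: the function $\overline w=\Phi(M)$ with $\mathcal G(y)=\int_0^y ds/g(s)$, $\Phi=\mathcal G^{-1}$, and $M=Z$ the solution of $-\Delta Z=A(|x|)$, is precisely the paper's $W$ defined implicitly by $\int_0^{W}dt/g(t)=Z$.

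Where you genuinely diverge from the paper is the \emph{subsolution}. The paper's Steps 2 and 3 build a subsolution $V$ by a two-stage $\delta$-regularization: first a Dirichlet problem for $-\Delta U=\Psi\,g(U+\delta)$ (so that the zero subsolution is admissible despite the singularity of $g$ at the origin), then a Neumann problem for $-\Delta V=\Psi\,g(V+\delta)$ squeezed between $U$ and $W$, and only then is $V$ used in Step 4. You instead write down directly $\underline w=\varepsilon\theta(|x|)$, with $\theta$ constant near $\partial K$ (so $\partial_\nu\underline w=0$ there), positive, and harmonic outside a compact collar; the only thing to check is the subsolution inequality on the collar, which reduces to $\varepsilon C_\theta\le c_1 g(\varepsilon)$, valid for small $\varepsilon$ since $g$ is bounded below near $0$ by $g(1)$. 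This removes the $\delta$-regularization entirely and collapses the paper's four-step scheme to a single exhaustion; the order $\underline w\le\overline w$ comes for free from Lemma \ref{com2}, and from there the monotone iteration on $B_{R_n}\setminus K$, the decreasing limit, the elliptic estimates (uniform on compacta because $w_n\ge\underline w>0$ there), and the uniqueness via Lemma \ref{com2} all match the paper's Step 4. The paper's more elaborate route has the feature that the subsolution $V$ solves a bona fide auxiliary PDE, which could be convenient in settings where no explicit barrier is available, but for the present problem your explicit $\varepsilon\theta$ is simpler and equally rigorous. (One small presentational nit: the finiteness of $M$ does not follow from the pointwise bound $\int_{r_0}^t\tau^{N-1}A\,d\tau\le t^{N-2}\int_{r_0}^\infty\tau A\,d\tau$ alone, which would give a divergent $\int t^{-1}dt$; it is the Fubini interchange — or the paper's integration by parts — that closes the estimate, as you indicate but could state more carefully.)
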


\begin{proof} Our approach relies on a version of the sub- and supersolution method as described in Theorem 2.1 in \cite[Section 3.2]{P1992}. We divide the proof into four steps as follows:
\begin{itemize} 
\item Step 1 discusses the existence of a supersolution. 
\item Step 2 and Step 3 detail the construction of a subsolution to \eqref{ina3}. This is a more delicate issue due to the fact that $g$ is nonincreasing, and thus may be singular around the origin. Precisely, in Step 2 we construct a subsolution which satisfies a homogeneous Dirichlet boundary condition on $\partial K$. Then, in Step 3 below we construct a proper subsolution to \eqref{ina3} which is a $C^2$ function up to (and including) the boundary of $K$. 
\item In Step 4 we deduce the existence of a $C^{2,\gamma}_{loc}(\overline{\R^N\setminus K})$ solution to \eqref{ina3}.  
\end{itemize}

\medskip

\noindent {\bf Step 1:} There exists $W\in C^2(\overline{\R^N\setminus K})$ a positive supersolution of  \eqref{ina3}, that is, $W$ satisfies
\begin{equation}\label{ina4}
\begin{cases}
-\Delta W  \geq  \Psi(x) g(W) & \quad\mbox{ in }\R^N\setminus K , \\[0.1cm]
\ds \;\;\; W>0 & \quad\mbox{ in }\overline{\R^N\setminus K} , \\[0.1cm]
\ds \;\;\; \frac{\partial W}{\partial \nu}  \geq 0  & \quad\mbox{ on }\partial K , \\[0.2cm]
\;\;\; W(x)\to 0 &  \quad\mbox{ as }|x|\to \infty.
\end{cases}
\end{equation}

By $(K1)$ there exists $r_0>0$ such that $B_{r_0}\subset K$. Using the integral condition on $A$ and a direct integration by parts, for $R>r_0$ we have
$$
\begin{aligned}
\int_{r_0}^R  t^{1-N}\int_{r_0}^t \tau^{N-1} A(\tau) d\tau&=-\frac{t^{2-N}}{N-2}\int_{r_0}^t \tau^{N-1} A(\tau) d\tau \Bigg|_{t=r_0}^{t=R}+\frac{1}{N-2}\int_{r_0}^R tA(t) dt\\[0.2cm]
&\leq \frac{1}{N-2}\int_{r_0}^R tA(t) dt\\[0.2cm]
&\leq \frac{1}{N-2}\int_{r_0}^\infty tA(t) dt<\infty.
\end{aligned}
$$
Thus,  we may define
\begin{equation}\label{fii0}
Z(x)=\int_{|x|}^\infty t^{1-N}\int_{r_0}^t \tau^{N-1} A(\tau) d\tau\quad\mbox{ for all }x\in \R^N\setminus B_{r_0}.
\end{equation}
Also, $Z$ satisfies
\begin{equation}\label{ffi1}
\begin{cases}
-\Delta Z(x) =A(|x|) &\quad\mbox{ in } \R^N\setminus B_{r_0},\\[0.1cm]
 \;\; \; \;\; Z(x)\to 0 &\quad\mbox{ as } |x|\to \infty.
\end{cases}
\end{equation} 
We further define
$W:\R^N\setminus K\to \R$ by 
\begin{equation}\label{ffi01}
\int_0^{W(x)} \frac{1}{g(t)}dt=Z(x)\quad\mbox{ for all }x\in \R^N\setminus K.
\end{equation}
Then $W\in C^2(\overline{\R^N\setminus K})$, $W(x)\to 0$ as $|x|\to\infty$,  and 
\begin{equation}\label{ffi2}
\nabla W(x)=g(W(x))\nabla Z(x) \quad\mbox{ for all } x\in \overline{\R^N\setminus K}.
\end{equation} 
Using \eqref{ffi1}, \eqref{ffi2} and $g'\leq 0$, we have
$$
\begin{aligned}
-\Delta W(x)&=-{\rm div}(\nabla W(x))\\[0.1cm]
&=-g(W(x))\Delta Z(x)-g'(W(x))\nabla W(x)\cdot \nabla Z(x)\\[0.1cm]
&=A(|x|)g(W(x))-g'(W(x)) g(W(x))|\nabla Z(x)|^2\\[0.1cm]
&\geq A(|x|)g(W(x)) \quad \mbox{(since $g'\leq 0$)} \\[0.1cm]
&\geq \Psi(x) g(W(x)) \quad \mbox{ for all }x\in \R^N\setminus K.
\end{aligned}
$$
Finally, using \eqref{eqbd}, \eqref{fii0} and \eqref{ffi2}, for all $x\in \partial K$ we find
$$
\begin{aligned}
\frac{\partial W}{\partial \nu}(x)&=\nabla W(x)\cdot \nu(x)=g(W(x))\nabla Z(x) \cdot \nu(x)\\[0.1cm]
&=-|x|^{-N}\Big(\int^{|x|}_{r_0} t^{N-1}A(t) dt\Big) \nu(x)\cdot x\geq 0.
\end{aligned}
$$
Thus, $W$ defined by \eqref{ffi01} is a supersolution of \eqref{ina3}.

\noindent {\bf Step 2:} Fix $\delta>0$. There exists $U\in C^2(\overline{\R^N\setminus K})$ such that
\begin{equation}\label{ina5}
\begin{cases}
-\Delta U  = \Psi(x)  g(U+\delta) \; , \; U>0& \quad\mbox{ in }\R^N\setminus K , \\[0.1cm]
\ds \;\;\; U=0 & \quad\mbox{ on }\partial K , \\[0.1cm]
\ds \;\;\; \frac{\partial U}{\partial \nu}  < 0  & \quad\mbox{ on }\partial K , \\[0.2cm]
\;\;\; U(x)\to 0 &  \quad\mbox{ as }|x|\to \infty.
\end{cases}
\end{equation}

Let  $n_0\geq 1$ be a positive integer such that $K\subset B_{n_0}$. For all $n\geq n_0$ consider the problem
 \begin{equation}\label{ina6}
\begin{cases}
-\Delta u_n  = \Psi(x) g(u_n+\delta) \; , \; u_n>0& \quad\mbox{ in }B_n\setminus K , \\[0.1cm]
\ds \;\;\; u_n=0 & \quad\mbox{ on }\partial K , \\[0.1cm]
\ds \;\;\; u_n=W  & \quad\mbox{ on }\partial B_n.  
\end{cases}
\end{equation}
The existence of $u_n\in C^2(\overline{B_n\setminus K})$ follows from the sub and supersolution method once we observe that $\underline u_n\equiv 0$ and $\overline u_n=W$ are respectively sub and supersolutions to \eqref{ina6}. We should point out that $g$ may be singular around zero but the function $[0,\infty)\ni t\mapsto g(t+\delta)$ is of class $C^1$ and this yields the existence of a $C^2$-solution in the sub and supersolution process. Next, we apply Lemma \ref{com1} for 
$$
\Omega=B_n\setminus K, \quad a\equiv 1, \quad b\equiv 0, \quad w_1=u_{n+1},  \quad w_2=u_n,
$$
to deduce $u_{n+1}\leq u_n$ in $B_n\setminus K$. 
We extend $u_n=W$ in $\R^N\setminus B_n$ and thus we have
\begin{equation}\label{ina7}
0\leq u_{n+1}\leq u_n\leq W\quad\mbox{ in }\R^N\setminus K \, ,\mbox{ for all }n\geq n_0.
\end{equation}
By the standard elliptic arguments (see \cite{ADN1, ADN2}) we deduce that $U(x)=\lim_{n\to \infty}u_n(x)$ satisfies $U\in C^2(\overline{\R^N\setminus K})$ and $-\Delta U=\Psi(x) g(U+\delta)$ in $\R^N\setminus K$. Clearly $U$ is not constant and thus the maximum principle yields $U>0$ in $\R^N\setminus K$. Also, for all $x\in \partial K$ one has $U(x)=\lim_{n\to \infty}u_n(x)=0$. Thus, any $x\in \partial K$ is a minimum point of $U$ and thus, by the Hopf boundary point lemma (see \cite[Section 6.4.2]{E2010}) one has $\frac{\partial U}{\partial \nu}<0$ on $K$. Finally, from \eqref{ina7} we deduce $U\leq W$ in $\R^N\setminus K$, so $U(x)\to 0$ as $|x|\to \infty$. Hence, $U$ satisfies \eqref{ina5}.

\medskip

\noindent {\bf Step 3:} There exists $V\in C^2(\overline{\R^N\setminus K})$ such that
\begin{equation}\label{ina8}
\begin{cases}
-\Delta V  = \Psi(x) g(V+\delta) & \quad\mbox{ in }\R^N\setminus K , \\[0.1cm]
\ds \;\;\; V>0 & \quad\mbox{ in } \overline{\R^N\setminus K}, \\[0.1cm]
\ds \;\;\; \frac{\partial V}{\partial \nu}  = 0  & \quad\mbox{ on }\partial K , \\[0.2cm]
\;\;\; V(x)\to 0 &  \quad\mbox{ as }|x|\to \infty.
\end{cases}
\end{equation}

As before, let  $n_0\geq 1$ be a positive integer such that $K\subset B_{n_0}$. For all $n\geq n_0$ consider the problem
 \begin{equation}\label{ina9}
\begin{cases}
-\Delta v_n  = \Psi(x) g(v_n+\delta) \; , \; v_n>0& \quad\mbox{ in }B_n\setminus K , \\[0.1cm]
\ds \;\;\; \frac{\partial v_n}{\partial \nu}=0 & \quad\mbox{ on }\partial K , \\[0.1cm]
\ds \;\;\; v_n=W  & \quad\mbox{ on }\partial B_n.  
\end{cases}
\end{equation}
Notice that \eqref{ina9} is the counterpart of \eqref{ina6} in which the homogeneous Dirichlet boundary condition on $\partial K$ is replaced by a Neumann one. 
To check the existence of a solution to \eqref{ina9} we first observe that for all $n\geq n_0$, $\underline v_n=U$ (where $U$ satisfies \eqref{ina5}) is a subsolution to \eqref{ina9}  while $\overline v_n=W$ is a supersolution, where $W$ satisfies \eqref{ina4}. As in Step 2 above, we now derive the existence of a solution $v_n\in C^2(\overline{B_n\setminus K})$ to \eqref{ina9} such that $U\leq v_n\leq W$ in $B_n\setminus K$. By the maximum principle we have $v_n>0$ in $B_n\setminus K$. Apply now Lemma \ref{com1} with $\Omega=B_n\setminus K$ and $a(x)$, $b(x)$ given by
\begin{equation}\label{oo}
a(x)=
\begin{cases}
0&\quad\mbox{ if }x\in \partial K\\[0.1cm]
1&\quad\mbox{ if }x\in \partial B_n
\end{cases}
\quad \mbox{ and }\quad 
b(x)=
\begin{cases}
1&\quad\mbox{ if }x\in \partial K\\[0.1cm]
0&\quad\mbox{ if }x\in \partial B_n
\end{cases}
,
\end{equation} 
we find $v_{n+1}\leq v_n$ in $B_n\setminus K$. Extending $v_n=W$ on $\R^N\setminus B_n$, we have that 
\begin{equation}\label{ina10}
U \leq v_{n+1}\leq u_n\leq W\quad\mbox{ in }\R^N\setminus K\, ,\mbox{ for all }n\geq n_0.
\end{equation}
Set $V(x)=\lim_{n\to \infty}v_n(x)$. 
By standard elliptic arguments (see \cite{ADN1,ADN2}) we deduce that $\{v_n\}_{n\geq n_0}$ is bounded in $C^{2,\gamma}_{loc}(\overline{\R^N\setminus K})$ and thus  $V$ satisfies 
\begin{equation}\label{ina11}
-\Delta V  = \Psi(x) g(V+\delta)  \quad\mbox{ in }\R^N\setminus K\quad\mbox{ and }\quad 
\frac{\partial V}{\partial \nu}  = 0   \quad\mbox{ on }\partial K.
\end{equation}
Passing to the limit in \eqref{ina10} we find $U\leq V\leq W$, so $V>0$ in $\R^N\setminus K$ and $V(x)\to 0$ as $|x|\to \infty$. It remains to check that $V>0$ on $\partial K$ which can be done as in Step 2 above: if $x_0\in \partial K$ satisfies $V(x_0)=0$, then $x_0$ is a minimum point of $V$ at which, by Hopf boundary point lemma (see \cite[Section 6.4.2]{E2010}), one has $\frac{\partial V}{\partial \nu} (x_0) < 0$. This clearly contradicts the boundary condition in \eqref{ina11} and proves $V>0$ on $\partial K$. Thus, $V$ satisfies \eqref{ina8}.

\medskip

\noindent {\bf Step 4:} Construction of a solution to \eqref{ina3}.

For any $n\geq n_0$ consider the problem 
 \begin{equation}\label{ina12}
\begin{cases}
-\Delta w_n  =\Psi(x) g(w_n) \; , \; w_n>0& \quad\mbox{ in }B_n\setminus K , \\[0.1cm]
\ds \;\;\; \frac{\partial w_n}{\partial \nu}=0 & \quad\mbox{ on }\partial K , \\[0.1cm]
\ds \;\;\; w_n=W  & \quad\mbox{ on }\partial B_n.  
\end{cases}
\end{equation}
Then,  $\underline w_n=V$ (where $V$ satisfies \eqref{ina8}) is a subsolution to \eqref{ina12}  while $\overline w_n=W$ is a supersolution, where $W$ satisfies \eqref{ina4}. Thus, the sub and supersolution method yields the existence of a solution $w_n\in C^{2, \gamma}(\overline{B_n\setminus K})$ to \eqref{ina12} such that $V\leq w_n\leq W$ in $B_n\setminus K$. We should note at this stage that $V>0$ in $\overline{B_n\setminus K}$ yields $w_n\in C^{2, \gamma}(\overline{B_n\setminus K})$. By Lemma \ref{com1} for $\Omega=B_n\setminus K$ and $a(x)$, $b(x)$ given by \eqref{oo}, we deduce $w_n\leq w_{n+1}$ in $B_n\setminus K$. 
Hence, extending $w_n=W$ on $\R^N\setminus B_n$ we have
\begin{equation}\label{ina13}
V \leq w_{n+1}\leq w_n\leq W\quad\mbox{ in }\R^N\setminus K\, ,\mbox{ for all }n\geq n_0.
\end{equation}
Set $w(x)=\lim_{n\to \infty}w_n(x)$. From \eqref{ina13} one has $V\leq w\leq W$ in $\R^N\setminus K$, so $w>0$ in $\overline{\R^N\setminus K}$ and $w(x)\to 0$ as $|x|\to \infty$. Now, standard elliptic arguments allow us to conclude that $w\in C^{2,\gamma}_{loc}(\overline{\R^N\setminus K})$ is a solution to \eqref{ina3}. The uniqueness of the solution to \eqref{ina3} follows from Lemma \ref{com2}.
\end{proof}

\begin{lemma}\label{a3}

Let $s>0$ and $K\subset \R^N$ $(N\geq 3)$ be a compact set that satisfies $(K1)$ and $(K2)$. Assume $\phi\in C^{0,\gamma}_{loc}(\overline{\R^N\setminus K})$, $\gamma\in (0,1)$, is positive and $\phi(x)\simeq |x|^{-\alpha}$ for some $\alpha>0$. 
Then, the problem
\begin{equation}\label{inaa1}
\begin{cases}
-\Delta w  = \phi(x)   w^{-s} & \quad\mbox{ in }\R^N\setminus K , \\[0.2cm]
\ds \;\;\; w>0 & \quad\mbox{ in }\overline{\R^N\setminus K} , \\[0.2cm]
\ds \;\;\; \frac{\partial w}{\partial \nu}  =0  & \quad\mbox{ on }\partial K , \\[0.2cm]
\;\;\; w(x)\to 0 &  \quad\mbox{ as }|x|\to \infty,
\end{cases}
\end{equation}
has a solution $w\in C^{2}(\overline{\R^N\setminus K})$ if and only if $\alpha>2$. Moreover, we have:

\begin{enumerate}
\item[{\rm (i)}] If $\alpha>2$ then \eqref{inaa1} has a unique solution $w\in C^{2,\gamma}_{loc}(\overline{\R^N\setminus K})$.
\item[{\rm (ii)}] $w$ satisfies 
\begin{equation}\label{lueq1}
w(x)\simeq 
\begin{cases}
|x|^{-\frac{\alpha-2}{1+s}} &\quad\mbox{ if } 2<\alpha<N+s(N-2),\\[0.1in]
|x|^{2-N}\log^{\frac{1}{1+s}} &\quad\mbox{ if } \alpha=N+s(N-2),\\[0.1in]
|x|^{2-N} &\quad\mbox{ if } \alpha>N+s(N-2).
\end{cases}
\end{equation}
\end{enumerate}
\end{lemma}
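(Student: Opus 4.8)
\emph{Existence, uniqueness and nonexistence.} The plan is to read the solvability dichotomy off Proposition~\ref{a2} and Lemma~\ref{a1}, applied with $\Psi:=\phi$ and $g(t):=t^{-s}$. Here $g\in C^1(0,\infty)$ is positive, nonincreasing and convex, with $\lim_{t\to 0^+}g(t)=+\infty$, and since $\phi\simeq|x|^{-\alpha}$ the function $A(t)=\sup_{|x|=t}\phi(x)$ satisfies $A(t)\simeq t^{-\alpha}$, so $\int_1^\infty tA(t)\,dt<\infty$ exactly when $\alpha>2$. Thus, if $\alpha>2$, Proposition~\ref{a2} produces a unique $w\in C^{2,\gamma}_{loc}(\overline{\R^N\setminus K})$ solving \eqref{inaa1} (any $C^2$ solution being $C^{2,\gamma}_{loc}$ by elliptic regularity, hence equal to it), which proves the ``if'' direction and part~(i). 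If instead $\alpha\le 2$ and \eqref{inaa1} had a solution $w$, pick $R>0$ with $K\subset B_R$: on $\R^N\setminus B_R$ one has $-\Delta w=\phi w^{-s}\ge c|x|^{-\alpha}w^{-s}$, so $w$ restricts to a positive $C^2(\R^N\setminus B_R)$ solution of \eqref{ina1} with $A(t)=c\,t^{-\alpha}$; since $\lim_{t\to0^+}g(t)>0$ and $\int_1^\infty t\cdot t^{-\alpha}\,dt=\infty$ for $\alpha\le 2$, this contradicts Lemma~\ref{a1}(i).

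\emph{Reduction and the pure power regime.} It remains to prove the asymptotics \eqref{lueq1} when $\alpha>2$. Fix $R>0$ with $K\subset B_R$. On the compact set $\overline{B_R\setminus K}$ the solution $w$ is continuous and strictly positive (up to $\partial K$), hence $w\simeq 1$ there, and since $r_0\le|x|\le R$ on this set each right-hand side of \eqref{lueq1} is also bounded between positive constants there; so it suffices to prove \eqref{lueq1} on $\R^N\setminus\overline{B_R}$, where we record $0<m_0:=\min_{\partial B_R}w\le w\le M_0:=\max_{\partial B_R}w<\infty$. All the bounds below come from comparing $w$ on $\R^N\setminus\overline{B_R}$ with explicit radial barriers via Lemma~\ref{com2}, applied with the compact set $\overline{B_R}$ in place of $K$, weight $\phi$ (or a constant), nonlinearity $t^{-s}$ (or $\equiv1$), and Dirichlet data on $\partial B_R$ (i.e.\ $a\equiv1$, $b\equiv0$); the hypothesis at infinity of Lemma~\ref{com2} holds because every function in sight tends to $0$. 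If $2<\alpha<N+s(N-2)$, put $\beta:=\frac{\alpha-2}{1+s}\in(0,N-2)$; then $-\Delta|x|^{-\beta}=\beta(N-2-\beta)|x|^{-\beta-2}$ and $\phi(x)|x|^{s\beta}\simeq|x|^{-\beta-2}$, so for $m>0$ small and $M>0$ large $m|x|^{-\beta}$ and $M|x|^{-\beta}$ are respectively a sub- and a supersolution of $-\Delta w=\phi w^{-s}$ on $\R^N\setminus\overline{B_R}$, ordered correctly against $w$ on $\partial B_R$; Lemma~\ref{com2} gives the first line of \eqref{lueq1}.

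\emph{The logarithmic and the large-$\alpha$ regimes.} If $\alpha=N+s(N-2)$ then $\frac{\alpha-2}{1+s}=N-2$ and a logarithmic correction is forced: with $\theta:=\frac1{1+s}$ one computes, for $|x|>r_0$,
\[
-\Delta\bigl(|x|^{2-N}\log^{\theta}(|x|/r_0)\bigr)=\theta(N-2)\,|x|^{-N}\log^{\theta-1}(|x|/r_0)+\tfrac{s}{(1+s)^2}\,|x|^{-N}\log^{\theta-2}(|x|/r_0),
\]
both terms positive, while $\theta-1=-s\theta$ makes $\phi(x)\bigl(|x|^{2-N}\log^{\theta}(|x|/r_0)\bigr)^{-s}\simeq|x|^{-N}\log^{\theta-1}(|x|/r_0)$, so the powers of the logarithm cancel in the comparison. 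Choosing $R$ large enough that the lower-order term is absorbed into the leading one, and then $m$ small and $M$ large, the functions $m|x|^{2-N}\log^{\theta}(|x|/r_0)$ and $M|x|^{2-N}\log^{\theta}(|x|/r_0)$ are sub/supersolutions on $\R^N\setminus\overline{B_R}$, and Lemma~\ref{com2} yields the second line of \eqref{lueq1} (if $\partial K$ meets $\partial B_{r_0}$ one first replaces $\log(|x|/r_0)$ by the asymptotically equivalent $\log(e|x|/r_0)$, which stays bounded away from $0$). Finally, if $\alpha>N+s(N-2)$, the lower bound $w\ge c|x|^{2-N}$ in $\R^N\setminus K$ is immediate from Lemma~\ref{lest}(ii) since $w$ is superharmonic; feeding it back into the equation gives $-\Delta w=\phi w^{-s}\le C|x|^{-\mu}$ on $\R^N\setminus\overline{B_R}$ with $\mu:=\alpha-s(N-2)>N$. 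The radial solution $\overline z$ of $-\Delta\overline z=C|x|^{-\mu}$ on $\R^N\setminus\overline{B_R}$ with $\overline z=M_0$ on $\partial B_R$ and $\overline z(x)\to 0$ is $\overline z(x)=A|x|^{2-N}-B|x|^{2-\mu}$ with $A,B>0$, hence $\overline z\le A|x|^{2-N}$ (because $2-\mu<2-N$); Lemma~\ref{com2} with $g\equiv1$ and weight $C|x|^{-\mu}$ gives $w\le\overline z\le A|x|^{2-N}$, which is the third line of \eqref{lueq1}. Combining the three regimes with the trivial estimate on $\overline{B_R\setminus K}$ completes the proof of \eqref{lueq1}.

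\emph{Main difficulty.} The routine parts are the solvability dichotomy and the pure power regime. The two delicate points are: (i) the borderline case $\alpha=N+s(N-2)$, where one must pin down the sign and coefficient of the lower-order term in $-\Delta\bigl(|x|^{2-N}\log^{\theta}(|x|/r_0)\bigr)$, verify it ruins neither barrier inequality (it helps the supersolution but must be absorbed into the leading term, for $R$ large, to keep the subsolution), and handle the vanishing of the logarithm at $|x|=r_0$; and (ii) the case $\alpha>N+s(N-2)$, where no purely algebraic supersolution of the nonlinear problem with the sharp order $|x|^{2-N}$ exists, forcing one to bootstrap the a priori estimate of Lemma~\ref{lest} into the equation and to close the upper bound by a \emph{linear} comparison rather than a nonlinear one.
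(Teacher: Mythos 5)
Your argument is correct and follows the same high-level architecture as the paper's proof: Proposition~\ref{a2} together with Lemma~\ref{a1} for the solvability dichotomy, and explicit radial barriers combined with the comparison Lemma~\ref{com2} for the asymptotics \eqref{lueq1}. In the first two regimes your barriers are identical to the paper's; the paper compares on $\R^N\setminus K$ directly (choosing the constants so that the ordering holds on $\partial K$), while you truncate at $\partial B_R$ and use Dirichlet data there, which is a cosmetic difference.

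The one genuine, though small, divergence is in the case $\alpha>N+s(N-2)$. The paper constructs the single corrected barrier $z(x)=|x|^{2-N}-C_0|x|^{2+s(N-2)-\alpha}$, observes $-\Delta z\simeq |x|^{-\alpha}z^{-s}$, and applies Lemma~\ref{com2} with $g(t)=t^{-s}$ to get both the upper and lower bound in one nonlinear comparison. You instead obtain the lower bound $w\gtrsim|x|^{2-N}$ for free from superharmonicity via Lemma~\ref{lest}(ii), feed it back into the equation to reduce the nonlinearity to a linear inhomogeneous bound $-\Delta w\le C|x|^{-\mu}$ with $\mu>N$, and close the upper bound with a linear comparison against an explicit radial majorant. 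Your bootstrap-then-linearize route is slightly more modular and avoids checking the nonlinear subsolution inequality for $cz$; the paper's route has the virtue of being uniform across all three regimes. Both are sound. Your aside about replacing $\log(|x|/r_0)$ with $\log(e|x|/r_0)$ when $\partial K$ touches $\partial B_{r_0}$ is a legitimate observation; the paper implicitly takes $r_0$ small enough that $\overline{B_{r_0}}$ lies in the interior of $K$, so that $\log(|x|/r_0)$ is bounded away from zero on $\overline{\R^N\setminus K}$, and your patch serves the same purpose.
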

If $2<\alpha<N+s(N-2)$, Lemma \ref{a3} shows that the asymptotic behavior of the unique solution $w$ to \eqref{inaa1} depends on $\alpha$. Further, if $\alpha\geq N+s(N-2)$, then, the asymptotic behavior of the unique solution $w$ is independent on $\alpha$.

\begin{proof} (i) Let $A(t)=t^{-\alpha}$ and $g(t)=t^{-s}$. By Lemma \ref{a1} we deduce that \eqref{inaa1} has no positive solutions if $\alpha\le 2$. On the other hand, if $\alpha>2$, Proposition \ref{a2} yields  the existence of a solution $w\in C^{2,\gamma}_{loc}(\overline{\R^N\setminus K})$ to \eqref{inaa1}. This completes the proof of part (i).

(ii1) Assume $2<\alpha<N+s(N-2)$. The function $z(x)=|x|^{-\frac{\alpha-2}{1+s}}$ satisfies
$$
-\Delta z(x)=m |x|^{-\alpha} z^{-s} \quad\mbox{ in }\R^N\setminus K,
$$
where
$$
m=\frac{\alpha-2}{1+s}\Big(N-2-\frac{\alpha-2}{1+s}  \Big)>0.
$$
We now choose a small constant $0<c<m^{-\frac{1}{1+s}}$ so that $cz\leq w$ on $\partial K$. Thus, $w$ and $cz$ satisfy:
\begin{equation}\label{eqwq1}
\begin{aligned}
\Delta w +|x|^{-\alpha} w^{-s} &=0\leq \Delta (cz)+|x|^{-\alpha} (cz)^{-s}  \quad\mbox{ in }\R^N\setminus K,\\[0.1cm]
w&\geq cz  \quad\mbox{ on }\partial K,\\[0.1cm]
w(x), \, cz(x)&\to 0 \quad\mbox{ as } |x|\to \infty.
\end{aligned}
\end{equation}
We now apply Lemma \ref{com2} with $a\equiv 1$, $b\equiv 0$ on $\partial K$ to deduce $w\geq cz$ in $\R^N\setminus K$. This yields the lower bound in \eqref{lueq1}. For the upper bound in \eqref{lueq1}, let $C>m^{-\frac{1}{1+s}}$ be large enough such that $Cz\geq w$ on $\partial K$. Then:
\begin{equation}\label{eqwq2}
\begin{aligned}
\Delta w +|x|^{-\alpha} w^{-s} &=0\geq \Delta (Cz)+|x|^{-\alpha} (Cz)^{-s}  \quad\mbox{ in }\R^N\setminus K,\\[0.1cm]
w&\leq Cz  \quad\mbox{ on }\partial K,\\[0.1cm]
w(x), \, Cz(x)&\to 0 \quad \mbox{ as } |x|\to \infty.
\end{aligned}
\end{equation}
As above and with the help of Lemma \ref{com2} we find $Cz\geq w$ in $\R^N\setminus K$ and this concludes the proof of \eqref{lueq1} in the case $2<\alpha<N+s(N-2)$.

(ii2)  Assume $\alpha=N+s(N-2)$. The function $z(x)=|x|^{2-N}\log^{\frac{1}{1+s}}\Big(\frac{|x|}{r_0}\Big)$ satisfies
$$
-\Delta z(x)=\frac{N-2}{1+s} |x|^{-N}\log^{-\frac{s}{1+s}} \Big(\frac{|x|}{r_0}\Big)+\frac{s}{(1+s)^2} |x|^{-N}\log^{-\frac{1+2s}{1+s}} \Big(\frac{|x|}{r_0}\Big) \quad\mbox{ in }\R^N\setminus K.
$$
Hence,
$$
-\Delta z(x)\simeq  |x|^{-N}\log^{-\frac{s}{1+s}} \Big(\frac{|x|}{r_0}\Big)\simeq |x|^{-\alpha} z^{-s} \quad\mbox{ in }\R^N\setminus K.
$$
The above estimates show that $z$ satisfies \eqref{eqwq1} for some constant $c>0$ and then, by Lemma \ref{com2} with $a\equiv 1$, $b\equiv 0$ on $\partial K$, we find $w\geq cz$ in $\R^N\setminus K$. In a similar way, for some large constant $C>0$ we have $Cz\geq w$ in $\R^N\setminus K$ and this yields \eqref{lueq1} in the case $\alpha=N+s(N-2)$.

(ii3) Assume $\alpha>N+s(N-2)$ and let $C_0=r_0^{\alpha-N-s(N-2)}>0$. Define  
$$
z(x)=|x|^{2-N}-C_0|x|^{2+s(N-2)-\alpha}.
$$ 
Then $z>0$ in $\R^N\setminus K$ and
$$
\begin{aligned}
-\Delta z(x)&=C_0\big(\alpha-2-s(N-2)\big)\big(\alpha-N-s(N-2)\big)|x|^{s(N-2)-\alpha}\\
&\simeq |x|^{s(N-2)-\alpha}\\
&\simeq |x|^{-\alpha}z^{-s} \quad\mbox{ in }\R^N\setminus K.
\end{aligned}
$$
In particular, this means that for some $C>c>0$ we have
$$
\Delta (cz)+|x|^{-\alpha} (cz)^{-s} \geq 0\geq \Delta (Cz)+|x|^{-\alpha} (Cz)^{-s} \quad\mbox{ in }\R^N\setminus K.
$$
By taking $C>0$ sufficiently large, and $c>0$ sufficiently small, we have $Cz\geq w\geq cz$ on $\partial K$. Thus, $cz$ and $Cz$ satisfy \eqref{eqwq1} and \eqref{eqwq2} respectively. We may now invoke Lemma \ref{com2} to deduce $cz\leq w\leq Cz$ in $\R^N\setminus K$, which is equivalent to \eqref{lueq1}.
\end{proof}

\begin{lemma}\label{a4}
Let $K\subset \R^N$ $(N\geq 3)$  be a compact set that satisfies the conditions $(K1)$ and $(K2)$. Assume $h\in C^{0,\gamma}_{loc}(\overline{\R^N\setminus K})$, $\gamma\in (0,1)$, is positive and satisfies 
$$
h(x)\simeq |x|^{-\beta} \quad\mbox{ in }\R^N\setminus K\; \mbox{ for some }\beta>2.
$$
Then, there exists a unique function  $w\in C^{2,\gamma}_{loc}(\overline{\R^N\setminus K})$  such that
\begin{equation}
\label{corw1}
\begin{cases}
-\Delta w  = h(x)   & \quad\mbox{ in }\R^N\setminus K , \\[0.2cm]
\ds \;\;\; w>0 & \quad\mbox{ in }\overline{\R^N\setminus K} , \\[0.2cm]
\ds \;\;\; \frac{\partial w}{\partial \nu}  =0  & \quad\mbox{ on }\partial K , \\[0.2cm]
\;\;\; w(x)\to 0 &  \quad\mbox{ as }|x|\to \infty.
\end{cases}
\end{equation}
Furthermore, $w$ satisfies 
\begin{equation}
\label{corw2}
w(x)\simeq \begin{cases}
|x|^{2-\beta} &\quad \mbox{ if }2<\beta<N\\
|x|^{2-N}\log \Big(\frac{|x|}{r_0}\Big)  &\quad \mbox{ if } \beta=N\\
|x|^{2-N} &\quad \mbox{ if } \beta>N
\end{cases}
\quad\mbox{ in }\R^N\setminus K.
\end{equation}
\end{lemma}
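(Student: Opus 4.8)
The plan is to obtain both parts of Lemma~\ref{a4} from results already established, treating the linear equation $-\Delta w=h(x)$ as the ``$s=0$'' instance of Proposition~\ref{a2} and of Lemma~\ref{a3}.

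\textbf{Existence and uniqueness.} First I would apply Proposition~\ref{a2} with the nonlinearity $g\equiv 1$ and with $\Psi=h$ (extended, if one insists, to a positive function in $C^{0,\gamma}_{loc}(\R^N\setminus\{0\})$ with $\Psi\simeq|x|^{-\beta}$, which changes nothing since only the values on $\R^N\setminus K$ enter the argument). The hypotheses of Proposition~\ref{a2} hold: $g\equiv1$ is positive, lies in $C^1(0,\infty)$ and is (trivially) nonincreasing, while $A(t)=\sup_{|x|=t}\Psi(x)\simeq t^{-\beta}$ gives $\int_1^\infty tA(t)\,dt\simeq\int_1^\infty t^{1-\beta}\,dt<\infty$ precisely because $\beta>2$. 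Proposition~\ref{a2} then produces a unique $w\in C^{2,\gamma}_{loc}(\overline{\R^N\setminus K})$ with $-\Delta w=\Psi(x)g(w)=h(x)$ satisfying the boundary and decay conditions in \eqref{corw1}; uniqueness is part of the statement of Proposition~\ref{a2}, and in any case follows immediately from Lemma~\ref{com2} applied to two solutions with $a\equiv1$, $b\equiv0$ on $\partial K$.

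\textbf{Asymptotic behaviour.} For \eqref{corw2} I would exhibit explicit radial barriers and compare them with $w$ via Lemma~\ref{com2} (taking $g\equiv1$, $a\equiv1$, $b\equiv0$ on $\partial K$), exactly as in the proof of Lemma~\ref{a3}. If $2<\beta<N$, take $z(x)=|x|^{2-\beta}$, for which $-\Delta z=(\beta-2)(N-\beta)|x|^{-\beta}$ with a strictly positive constant, so $-\Delta z\simeq|x|^{-\beta}\simeq h(x)$ and $z\simeq|x|^{2-\beta}$. If $\beta=N$, take $z(x)=|x|^{2-N}\log(|x|/r_0)$; the logarithmic terms in $\Delta z$ cancel, leaving $-\Delta z=(N-2)|x|^{-N}\simeq h(x)$. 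If $\beta>N$, take $z(x)=|x|^{2-N}-C_0|x|^{2-\beta}$ with a small fixed $C_0>0$ (for instance $C_0<r_0^{\beta-N}$), which forces $z>0$ on $\overline{\R^N\setminus K}\subset\{|x|\ge r_0\}$ and yields $-\Delta z=C_0(\beta-2)(\beta-N)|x|^{-\beta}\simeq h(x)$ together with $z\simeq|x|^{2-N}$ since the correction term is of lower order. In each case, since $h\simeq|x|^{-\beta}$ while $-\Delta z$ is comparable to a fixed positive multiple of $|x|^{-\beta}$, one can choose $c>0$ small and $C>0$ large so that $\Delta(cz)+h\ge0\ge\Delta(Cz)+h$ in $\R^N\setminus K$; moreover, as $w$ is bounded below by a positive constant on the compact set $\partial K$ and $z$ is bounded there, the inequalities $cz\le w\le Cz$ on $\partial K$ can also be arranged, and $w$, $cz$, $Cz$ all tend to $0$ at infinity. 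Lemma~\ref{com2} then gives $cz\le w\le Cz$ in $\R^N\setminus K$, which is exactly \eqref{corw2}.

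\textbf{Main difficulty.} There is in fact very little to overcome here: the equation is linear and the whole argument is a transcription of Lemma~\ref{a3} with $s=0$. The only points that require a moment's attention are checking that the constant nonlinearity $g\equiv1$ meets the hypotheses of Proposition~\ref{a2} and of Lemma~\ref{com2}, verifying the cancellation of the logarithmic terms in the borderline case $\beta=N$, and choosing $C_0$ so that the barrier in the case $\beta>N$ remains positive all the way up to $\partial K$.
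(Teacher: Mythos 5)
Your proof is correct and follows essentially the same route as the paper: existence and uniqueness via Proposition~\ref{a2} with $g\equiv 1$, and the asymptotics via the same three radial barriers compared with $w$ through Lemma~\ref{com2}. The only (immaterial) divergence is your choice $C_0<r_0^{\beta-N}$ in the case $\beta>N$, which is slightly more cautious than the paper's $C_0=r_0^{\beta-N}$ and guarantees $z>0$ up to $\partial K$.
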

\begin{proof} The existence and uniqueness of a solution $w\in C^{2,\gamma}_{loc}(\overline{\R^N\setminus K})$ to \eqref{corw1} follows from Proposition \ref{a2} in which we take $g\equiv 1$ and $\Psi\equiv h$. We next focus on the behaviour of $w$ given in \eqref{corw2}.
Let 
$$
z(x)=
\begin{cases}
|x|^{2-\beta} &\quad \mbox{ if }2<\beta<N,\\[0.2cm]
|x|^{2-N}\log \Big(\frac{|x|}{r_0}\Big)  &\quad \mbox{ if } \beta=N,\\[0.3cm]
|x|^{2-N}-r_0^{\beta-N} |x|^{2-\beta} &\quad \mbox{ if } \beta>N.
\end{cases}
$$
Then $z>0$ in $\overline{\R^N\setminus K}$ and
$$
-\Delta z(x)=
\begin{cases}
(\beta-2)(N-\beta) |x|^{-\beta} &\quad \mbox{ if }2<\beta<N,\\[0.2cm]
(N-2) |x|^{-N}  &\quad \mbox{ if } \beta=N,\\[0.3cm]
r_0^{N-\beta} (\beta-2)(\beta-N) |x|^{-\beta} &\quad \mbox{ if } \beta>N.
\end{cases}
$$
We fix two constants $C>c>0$ such that $\underline w=c z$ and $\overline w=Cz$ satisfy 
\begin{equation}
\label{corw3}
\underline w\leq w\leq \overline w\quad\mbox{ on }\partial K
\end{equation}
and
\begin{equation}
\label{corw4}
\Delta \underline w+h(x)\leq 0=\Delta w +h(x)\leq \Delta \overline w +h(x) \quad\mbox{ in }\R^N\setminus K.
\end{equation}
By Lemma \ref{com2} in which we take $\Psi\equiv h$, $g\equiv 1$, $a\equiv 1$ and $b\equiv 0$ we deduce  
$\underline w(x)\leq w\leq \overline w$ in $\R^N\setminus K$. Hence, $w\simeq z$ in $\R^N\setminus K$, which proves \eqref{corw2}.
\end{proof}

A similar result to Lemma \ref{a4} is stated below.
\begin{lemma}\label{a5}
Let $K\subset \R^N$ $(N\geq 3)$  be a compact set that satisfies conditions $(K1)$ and $(K2)$. Assume $h\in C^{0,\gamma}_{loc}(\overline{\R^N\setminus K})$, $\gamma\in (0,1)$, is positive and  satisfies 
$$
h(x)\simeq |x|^{-N}\log^{-\theta}\Big(\frac{|x|}{r_0}\Big)  \quad\mbox{ in }\R^N\setminus K,\; \mbox{ for some }\theta>0.
$$
Then, there exists a unique function  $w\in C^{2,\gamma}_{loc}(\overline{\R^N\setminus K})$  that satisfies \eqref{corw1}. 
Furthermore, we have
\begin{equation}
\label{corw5}
w(x)\simeq \begin{cases}
|x|^{2-N} \log^{1-\theta}\Big(\frac{|x|}{r_0}\Big) &\quad \mbox{ if }0<\theta<1\\
|x|^{2-N}\log\bigg( C_0 \log\Big(\frac{|x|}{r_0}\Big)\bigg)  &\quad \mbox{ if } \theta=1\\
|x|^{2-N} &\quad \mbox{ if } \theta>1
\end{cases}
\quad\mbox{ in }\R^N\setminus K,
\end{equation}
where $C_0>1$ is a large constant such that $|x|>r_0e^{1/C_0}$ for all $x\in \R^N\setminus K$.
\end{lemma}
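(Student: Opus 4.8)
Here is how I would proceed.

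The existence and uniqueness of a solution $w\in C^{2,\gamma}_{loc}(\overline{\R^N\setminus K})$ of \eqref{corw1} is obtained exactly as in Lemma \ref{a4}, by applying Proposition \ref{a2} with $g\equiv 1$ (which is positive, $C^1$ and nonincreasing) and $\Psi\equiv h$. The only hypothesis to check is condition (ii) of Proposition \ref{a2}: with $A(t)=\sup_{|x|=t}\Psi(x)\simeq t^{-N}\log^{-\theta}(t/r_0)$ and $N\geq 3$ one has $tA(t)\simeq t^{1-N}\log^{-\theta}(t/r_0)\leq C t^{-2}$ for $t$ large, hence $\int_1^\infty tA(t)\,dt<\infty$ for every $\theta>0$; uniqueness then follows from Lemma \ref{com2} with $a\equiv 1$, $b\equiv 0$, precisely as in Lemma \ref{a4}. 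Thus the real content of the statement is the asymptotic estimate \eqref{corw5}.

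For \eqref{corw5}, the plan is to produce, in each of the three regimes, an explicit radial barrier $z>0$ on $\overline{\R^N\setminus K}$ with $z(x)\to 0$ as $|x|\to\infty$ and $-\Delta z\simeq h$ in $\R^N\setminus K$, and then to squeeze $w$ between $cz$ and $Cz$ by Lemma \ref{com2}. The computational backbone is the identity
\[
-\Delta\big(|x|^{2-N}\psi(\ell)\big)=|x|^{-N}\big[(N-2)\psi'(\ell)-\psi''(\ell)\big]\qquad\text{in }\R^N\setminus\{0\},\quad \ell:=\log(|x|/r_0),
\]
valid for any $C^2$ profile $\psi$ (the two remaining terms from differentiating $|x|^{2-N}$ cancel because $|x|^{2-N}$ is harmonic). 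So it is enough to choose $\psi$ with $(N-2)\psi'(\ell)-\psi''(\ell)\simeq\ell^{-\theta}$ on the range of $\ell$ realised on $\R^N\setminus K$. Here one uses that the hypotheses of the lemma — continuity of $h$ up to $\partial K$ together with the two-sided bound $h\simeq|x|^{-N}\log^{-\theta}(|x|/r_0)$ — force $\overline{\R^N\setminus K}$ to stay at positive distance from $\partial B_{r_0}$, so $\ell\geq\ell_1$ on $\R^N\setminus K$ for some $\ell_1>0$; this is exactly what makes all the lower-order terms below harmless.

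The three profiles are $\psi(\ell)=\ell^{1-\theta}$ when $0<\theta<1$; $\psi(\ell)=\log(C_0\ell)$ when $\theta=1$, with $C_0>1$ chosen so that $C_0\ell_1>1$ (this is precisely the constant $C_0$ appearing in the statement); and $\psi(\ell)=1-c_0\ell^{1-\theta}$ when $\theta>1$, with $c_0>0$ so small that $c_0\ell_1^{1-\theta}<1$. In each case $\psi$ is positive on $[\ell_1,\infty)$, $z=|x|^{2-N}\psi(\ell)$ is $C^2$ up to $\partial K$ and decays at infinity, and differentiating once shows that $(N-2)\psi'-\psi''$ is a sum of strictly positive terms whose leading one is a positive multiple of $\ell^{-\theta}$ and whose remainder is $O(\ell^{-\theta-1})$; since $\ell\geq\ell_1>0$ this gives $(N-2)\psi'-\psi''\simeq\ell^{-\theta}$, i.e. $-\Delta z\simeq|x|^{-N}\log^{-\theta}(|x|/r_0)\simeq h$. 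At the same time $z=|x|^{2-N}\psi(\ell)$ reproduces exactly the three right-hand sides of \eqref{corw5} (in the last case using $\delta_0\leq\psi\leq 1$ with $\delta_0:=1-c_0\ell_1^{1-\theta}>0$). Finally, writing $c_1 h\leq -\Delta z\leq c_2 h$ with $0<c_1\leq c_2$, fix $c\in(0,1/c_2)$ small and $C>1/c_1$ large enough that $cz\leq w\leq Cz$ on the compact set $\partial K$; then
\[
\Delta(Cz)+h\leq(1-Cc_1)h\leq 0=\Delta w+h\leq(1-cc_2)h\leq\Delta(cz)+h\quad\text{in }\R^N\setminus K,
\]
and $cz,w,Cz$ all vanish at infinity, so Lemma \ref{com2} (with $a\equiv 1$, $b\equiv 0$, $g\equiv 1$, $\Psi\equiv h$, applied first to $(Cz,w)$ and then to $(w,cz)$) yields $cz\leq w\leq Cz$ in $\R^N\setminus K$, which is \eqref{corw5}.

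The differentiation identity and the one-variable estimates on $\psi$ are routine; the step I expect to be the main obstacle is identifying the correct profile $\psi$ in each regime — in particular the iterated-logarithm $\log(C_0\ell)$ at the threshold $\theta=1$, and the observation that for $\theta>1$ one cannot use $z=|x|^{2-N}$ itself (it is harmonic, hence neither a sub- nor a supersolution of $-\Delta w=h>0$) but must subtract from it a small negative power of $\ell$ so that $-\Delta z$ picks up exactly the logarithmic decay of $h$.
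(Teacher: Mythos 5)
Your proposal is correct and follows essentially the same route as the paper: the same three barrier functions $z=|x|^{2-N}\psi(\ell)$ with $\psi(\ell)=\ell^{1-\theta}$, $\log(C_0\ell)$, and $1-c_0\ell^{1-\theta}$, combined with the comparison Lemma \ref{com2}. The unifying identity $-\Delta\big(|x|^{2-N}\psi(\ell)\big)=|x|^{-N}\big[(N-2)\psi'(\ell)-\psi''(\ell)\big]$ is a clean way to present the direct computations the paper carries out case by case, and your explicit remark that the two-sided bound on $h$ forces $\ell\geq\ell_1>0$ on $\overline{\R^N\setminus K}$ makes precise a point the paper leaves implicit in its choice of $C_0$.
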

\begin{proof} As above, the existence and uniqueness of a solution $w\in C^{2,\gamma}_{loc}(\overline{\R^N\setminus K})$ to \eqref{corw1} follows from Proposition \ref{a2}. We next derive the asymptotic behavior in \eqref{corw5}.
\medskip

\noindent{\bf Case 1:} $0<\theta <1$. Let $z(x)=|x|^{2-N} \log^{1-\theta}\Big(\frac{|x|}{r_0}\Big)$. Then, by a direct calculation we find
$$
-\Delta z(x)=(N-2)(1-\theta)|x|^{-N} \log^{-\theta}\Big(\frac{|x|}{r_0}\Big)+\theta(1-\theta)|x|^{-N} \log^{-1-\theta}\Big(\frac{|x|}{r_0}\Big)
$$
which shows that 
$$
-\Delta z(x)\simeq h(x)\quad \mbox{ in }\R^N\setminus K.
$$
Thus, for appropriate constants $C>c>0$ we have that $\underline w=cz$ and $\overline w=Cz$ satisfy \eqref{corw4}. A similar approach to that  in the proof of Lemma \ref{a4} now yields 
$$
w(x)\simeq z(x)\simeq |x|^{2-N} \log^{1-\theta}\Big(\frac{|x|}{r_0}\Big)\quad\mbox{ in } \R^N\setminus K.
$$

\noindent{\bf Case 2:} $\theta =1$. The function $z(x)=|x|^{2-N}\log\bigg( C_0 \log\Big(\frac{|x|}{r_0}\Big)\bigg)  $ satisfies
$$
-\Delta z(x)=(N-2) |x|^{-N}\log^{-1}\Big(\frac{|x|}{r_0}\Big)+|x|^{-N} \log^{-2}\Big(\frac{|x|}{r_0}\Big)\simeq h(x) 
\quad\mbox{ in } \R^N\setminus K.
$$
We proceed as above in order to deduce 
$$
w(x)\simeq z(x)\simeq |x|^{2-N}\log\bigg(C_0 \log\Big(\frac{|x|}{r_0}\Big)\bigg)  \quad\mbox{ in } \R^N\setminus K.
$$ 
\noindent{\bf Case 3:} $\theta >1$. Let 
$$
z(x)=|x|^{2-N}\bigg(1-C\log^{1-\theta}\Big(\frac{|x|}{r_0}\Big)\bigg),
$$ 
where $C>0$ is large enough so that $z>0$ in $\overline{\R^N\setminus K}$. Since $\theta>1$, we have 
$$
\begin{aligned}
-\Delta z(x)&=C(N-2)(\theta-1)|x|^{-N} \log^{-\theta}\Big(\frac{|x|}{r_0}\Big)+C\theta(\theta-1)|x|^{-N} \log^{-1-\theta}\Big(\frac{|x|}{r_0}\Big)\\[0.1cm]
&\simeq h(x) 
\quad\mbox{ in } \R^N\setminus K.
\end{aligned}
$$
With a similar argument to that in the proof of Lemma \ref{a4} we now deduce $
w(x)\simeq z(x)\simeq |x|^{2-N}$ in $\R^N\setminus K$. This concludes the proof of our Lemma.
\end{proof}

\section{Proof of Theorem \ref{th1}}
Assume $(u,v)$ is a positive solution of \eqref{GM0}.

(i) If $N=2$, by Lemma \ref{lest}(i) we have $u(x)\geq c$ in $\R^2\setminus B_R$, for some large $R>0$ so that $K\subset B_R$. Then, $v$ satisfies $-\Delta v\geq c^m v^{-s}$ in $\R^N\setminus B_R$. Now, by Lemma \ref{a1}(ii), this last inequality has no positive solutions.

(ii) If $N\geq 3$ and $m\leq \frac{2}{N-2}$, we apply Lemma \ref{lest}(ii) to obtain $u\geq C|x|^{2-N}$ in $\R^N\setminus B_R$, for some $R>0$. We use this estimate in the second equation of \eqref{GM0} to deduce that $v$ satisfies 
$$
-\Delta v\geq C^{m}|x|^{-m(N-2)}\quad\mbox{ in } \R^N\setminus B_R.
$$ 
We now apply Lemma \ref{a1}(i) with $A(x)=C^{m}|x|^{-m(N-2)}$ to raise a contradiction since $m(N-2)\leq 2$. 

(iii) Assume $N\geq 3$ and $0<p\leq \frac{N}{N-2}$.  Since $v(x)\to 0$ as $|x|\to \infty$, there exists $M>0$ such that $v(x)\leq M$ in $\R^N\setminus K$. Then, $u$ satisfies $-\Delta u\geq M^{-q} u^p$ in $\R^N\setminus K$. We now deduce from \cite[Theorem 2.1]{AS11} that this last inequality has no positive solutions if $0<p\leq \frac{N}{N-2}$.
\qed

\section{Proof of Theorem \ref{th2}}

Assume first that \eqref{GM0} has a positive solution with $u(x)\simeq |x|^{2-N}$ in $\R^N\setminus K$. By Theorem \ref{th1}(ii) we know that $m>\frac{2}{N-2}$.  Then, $v$ satisfies \eqref{inaa1} in Lemma \ref{a3}, where $\phi(x)=u(x)^m\simeq |x|^{-\alpha}$ and $\alpha=m(N-2)>2$. By this result, we have 
\begin{equation}\label{ps0}
v(x)\simeq 
\begin{cases}
\ds |x|^{2-N} & \quad \mbox{ if } m> s+\frac{N}{N-2} ,\\[0.2cm]
\ds |x|^{2-N}\log^{\frac{1}{1+s}} \Big(\frac{|x|}{r_0}\Big)  &\quad \mbox{ if } m= s+\frac{N}{N-2},\\[0.2cm]
\ds |x|^{-\frac{m(N-2)-2}{1+s}} &\quad \mbox{ if }\frac{2}{N-2}<m< s+\frac{N}{N-2}.
\end{cases}
\end{equation}
From the first equation of \eqref{GM0} we have
\begin{equation}\label{eqh1}
-\Delta u(x)\simeq u^pv^{-q}+|x|^{-k}\simeq |x|^{-p(N-2)}v(x)^{-q}+|x|^{-k} \quad \mbox{ in }\R^N\setminus K.
\end{equation}
\noindent{\bf Case 1:} $m>s+\frac{N}{N-2}$. From \eqref{ps0} and \eqref{eqh1} we have
\begin{equation}\label{eqh2}
-\Delta u(x)\simeq |x|^{-\beta} \quad \mbox{ in }\R^N\setminus K,
\end{equation}
where $\beta=\min\big\{ k, (p-q)(N-2)\big\}$. Using Lemma \ref{a4} we see now that $u(x)\simeq |x|^{2-N}$ in $\R^N\setminus K$  if and only if $\beta>N$, that is $k>N$ and $p>q+\frac{N}{N-2}$. We thus retrieve condition (i) in the statement of Theorem \ref{th1}.

\medskip

\noindent{\bf Case 2:} $\frac{2}{N-2}<m<s+\frac{N}{N-2}$. Then, from \eqref{ps0} and \eqref{eqh1} we have that $u$ satisfies \eqref{eqh2} where 
\begin{equation}\label{be}
\beta=\min\bigg\{ k, p(N-2)-q\frac{m(N-2)-2}{1+s}\bigg\}.
\end{equation}
Now, $u(x)\simeq |x|^{2-N}$ in $\R^N\setminus K$ and Lemma \ref{a4} yield $\beta>N$ and we retrieve condition (iii) in the statement of Theorem \ref{th1}.
\medskip

\noindent{\bf Case 3:} $m=s+\frac{N}{N-2}$. This case is more delicate as one has to handle the logarithmic terms in the asymptotic of $v(x)$ in \eqref{ps0}. Let $\varepsilon>0$. From \eqref{ps0} and \eqref{eqh1} we have 
\begin{equation}\label{eqh3}
-\Delta u\simeq |x|^{-(p-q)(N-2)} \log^{-\frac{q}{1+s}} \Big(\frac{|x|}{r_0}\Big)+ |x|^{-k}\quad\mbox{ in }\R^N\setminus K.
\end{equation}
In particular, since $\log^{-\frac{q}{1+s}}t\geq c_\varepsilon t^{-\varepsilon(N-2)}$ as $t\geq 1$, one has 
$$
-\Delta u(x)\geq C_\varepsilon  |x|^{-\beta} \quad \mbox{ in }\R^N\setminus K,
$$
where 
$$
\beta=\min\big\{ k, (p-q+\varepsilon)(N-2)\big\}.
$$
With the same method as in the proof of Lemma \ref{a4} we have 
$$
u(x)\geq C_\varepsilon
\begin{cases}
|x|^{2-\beta} &\quad \mbox{ if }2<\beta<N\\
|x|^{2-N}\log \Big(\frac{|x|}{r_0}\Big)  &\quad \mbox{ if } \beta=N\\
|x|^{2-N} &\quad \mbox{ if } \beta>N
\end{cases}
\quad\mbox{ in }\R^N\setminus K.
$$
Thus, $u(x)\simeq |x|^{2-N}$ in $\R^N\setminus K$ yields $\beta>N$ so that $k>N$ and $p-q+\varepsilon>\frac{N}{N-2}$. Since $\varepsilon>0$ was arbitrary, one must have $p\geq q+\frac{N}{N-2}$. 

If $p>q+\frac{N}{N-2}$, we retrieve condition (i) in the statement of Theorem \ref{th2}.
\smallskip

If $p=q+\frac{N}{N-2}$ we reassess the above estimates and from $k>N$ and \eqref{eqh3} we deduce
$$
-\Delta u\simeq |x|^{-N} \log^{-\frac{q}{1+s}} \Big(\frac{|x|}{r_0}\Big)\quad\mbox{ in }\R^N\setminus K.
$$
We combine the above asymptotic with the conclusion of Lemma \ref{a5} and $u(x)\simeq |x|^{2-N}$ in $\R^N\setminus K$ to derive $q>1+s$. This yields condition (ii) in the statement of Theorem \ref{th2}.

Conversely, assume now that one of conditions (i)-(iii) in the statement of Theorem \ref{th2} hold and let us prove the existence of a positive solution $(u,v)$ of \eqref{GM0} with $u(x)\simeq |x|^{2-N}$ in $\R^N\setminus K$. Define
\begin{equation}\label{aset}
\mathcal{A}=\Bigg\{(u, v)\in C^{0,\gamma}_{loc}(\overline{\R^N\setminus K})\times C^{0,\gamma}_{loc}(\overline{\R^N\setminus K}): \;
\begin{aligned}
D|x|^{2-N}&\leq u(x)\leq E |x|^{2-N}\\
F\psi(x)&\leq v(x)\leq G \psi(x)
\end{aligned} \Bigg\},
\end{equation}
where $\psi(x) $ is given by 
\begin{equation}\label{ps}
\psi(x)=
\begin{cases}
\ds |x|^{2-N} &\mbox{ if } m\geq s+\frac{N}{N-2} \mbox{ and } p>q+\frac{N}{N-2},\\[0.2cm]
\ds |x|^{2-N}\log^{\frac{1}{1+s}} \Big(\frac{|x|}{r_0}\Big)  &\mbox{ if } m= s+\frac{N}{N-2}\, , \; p=q+\frac{N}{N-2} \mbox{ and } q>1+s,\\[0.2cm]
\ds |x|^{-\frac{m(N-2)-2}{1+s}} &\mbox{ if }\frac{2}{N-2}<m< s+\frac{N}{N-2}\, , p>\frac{q}{1+s}\Big(m-\frac{2}{N-2}\Big)+\frac{N}{N-2}.
\end{cases}
\end{equation}
and $D,E,F,G>0$ are suitable fixed constants that will be made precise later. For any $(u,v)\in\mathcal{A}$, let $(Tu, Tv)$ be the unique solution of 
\begin{equation}\label{TGM}
\begin{cases}
\displaystyle   -\Delta (Tu)=\frac{u^p}{v^q}+\lambda \rho(x) \,, Tu>0 &\quad\mbox{ in }\R^N\setminus K,\\[0.1in]
\displaystyle   -\Delta (Tv)=u^m (Tv)^{-s}  \,, Tv>0 &\quad\mbox{ in }\R^N\setminus K,\\[0.1in]
\displaystyle \;\;\; \frac{\partial (Tu)}{\partial \nu}=\frac{\partial (Tv)}{\partial \nu}=0 &\quad\mbox{ on }\partial K,\\[0.1in]
\displaystyle \;\;\;  Tu(x), Tv(x)\to 0 &\quad\mbox{ as }|x|\to \infty.
\end{cases}
\end{equation}
Set next
$$
\mathcal{H}:\mathcal{A}\to C(\overline{\R^N\setminus K})\times C(\overline{\R^N\setminus K})\,, \quad \mathcal{H}[u,v]=(Tu, Tv).
$$
\medskip

\noindent{\bf Step 1:} The mapping $\mathcal{H}$ is well defined. 

We show that for each $(u,v)\in \mathcal{A}$, there exist unique  $Tu$ and $Tv$ in the space of twice continuously differentiable  functions $C^2(\overline{\R^N\setminus K})$ that satisfy \eqref{TGM}.  Indeed, let us note first that $(u,v)\in \mathcal{A}$ and \eqref{ro} yield
$$
\frac{u^p}{v^q}+\lambda \rho(x) \simeq |x|^{-p(N-2)}\psi(x)^{-q}+|x|^{-k}\quad\mbox{ in }\R^N\setminus K.
$$
Thus, if condition (i) or (iii) in Theorem \ref{th2} holds, then from \eqref{ps} we find
$$
\frac{u^p}{v^q}+\lambda \rho(x) \simeq |x|^{-\beta}\quad\mbox{ in }\R^N\setminus K,
$$
where
\begin{equation}\label{bb}
\beta=
\begin{cases}
\min\bigg\{ k, p(N-2)-q\frac{m(N-2)-2}{1+s}\bigg\}&\mbox{ if }
\begin{cases}
\frac{2}{N-2}<m<s+\frac{N}{N-2}\, ,\\[0.3cm]
p>\frac{q}{1+s}\Big(m-\frac{2}{N-2}\Big)+\frac{N}{N-2}\,,
\end{cases}\\[0.7cm]
\min\big\{ k, (p-q)(N-2)\big\}&\mbox{ if } 
\begin{cases}
m\geq s+\frac{N}{N-2}\, ,\\[0.3cm]
p>q+\frac{N}{N-2}.
\end{cases}
\end{cases}
\end{equation}
Due to conditions (i) and (iii) in Theorem \ref{th2} and since $k>N$, we have $\beta>N$. Thus, by Lemma \ref{a4} there exists a unique $Tu\in C^2(\overline{\R^N\setminus K})$ which satisfies \eqref{TGM}.

If condition (ii) in Theorem \ref{th2} holds,  then $m = s+\frac{N}{N-2}$, $p=q+\frac{N}{N-2}$ and $q>1+s$.  Thus,
$$
\frac{u^p}{v^q}+\lambda \rho(x) 
\simeq |x|^{-N} \log^{-\frac{q}{1+s}} \Big(\frac{|x|}{r_0}\Big)+ |x|^{-k}\simeq |x|^{-N} \log^{-\frac{q}{1+s}} \Big(\frac{|x|}{r_0}\Big)\quad\mbox{ in }\R^N\setminus K.
$$
We may now apply Lemma \ref{a5} to derive the existence and uniqueness of $Tu\in C^2(\overline{\R^N\setminus K})$ as a unique solution to \eqref{TGM}.

Since $u(x)\simeq |x|^{2-N}$ in $\R^N\setminus K$, we see that $Tv$ defined in \eqref{TGM} satisfies the problem \eqref{inaa1} with $\phi(x)=u^m\simeq |x|^{-m(N-2)}$ and $m(N-2)>2$. By Lemma \ref{a2}, there exists a unique solution $Tv\in C^2(\overline{\R^N\setminus K})$ to \eqref{TGM}. Thus, $\mathcal{H}$ is well defined.

\medskip

\noindent{\bf Step 2:} There exist positive constants $D,E,F,G>0$ and $\lambda^*>0$ such that 
\begin{equation}\label{hal}
\mathcal{H}(\mathcal{A})\subseteq \mathcal{A} \quad\mbox{ for all }0<\lambda\leq \lambda^*.
\end{equation} 

Let $\alpha=m(N-2)>2$.
\begin{lemma}\label{cc}
Let $\psi(x)$ be given by \eqref{ps} and $h(x)=|x|^{-p(N-2)}\psi(x)^{-q}+|x|^{-k}$. Then, there exist two positive constants $C_4>C_3>0$ depending on $N,p,q,m,s$ (but not on $\lambda$) such that:
\begin{enumerate}
\item[{\rm (i)}]  Any function $w\in C^2(\overline{\R^N\setminus K})$ which is a solution of 
\begin{equation}\label{in1}
\begin{cases}
-\Delta w  \geq   |x|^{-\alpha}   w^{-s} \, , w>0 & \quad\mbox{ in }\R^N\setminus K , \\[0.2cm]
\ds \;\;\; \frac{\partial w}{\partial \nu}  =0  & \quad\mbox{ on }\partial K , \\[0.2cm]
\;\;\; w(x)\to 0 &  \quad\mbox{ as }|x|\to \infty,
\end{cases}
\end{equation}
satisfies $w(x)\geq C_3 \psi(x)$ in $\R^N\setminus K$.
\item[{\rm (ii)}] Any function $w\in C^2(\overline{\R^N\setminus K})$ which is a solution of 
\begin{equation}\label{in2}
\begin{cases}
-\Delta w  \leq   |x|^{-\alpha}   w^{-s} \, , w>0 & \quad\mbox{ in }\R^N\setminus K , \\[0.2cm]
\ds \;\;\; \frac{\partial w}{\partial \nu}  =0  & \quad\mbox{ on }\partial K , \\[0.2cm]
\;\;\; w(x)\to 0 &  \quad\mbox{ as }|x|\to \infty,
\end{cases}
\end{equation}
satisfies $w(x)\leq C_4 \psi(x)$ in $\R^N\setminus K$.
\item[{\rm (iii)}] Any function $w\in C^2(\overline{\R^N\setminus K})$ which is a solution of 
\begin{equation}\label{in3}
\begin{cases}
-\Delta w  \geq   |x|^{-k} \, , w>0 & \quad\mbox{ in }\R^N\setminus K , \\[0.2cm]
\ds \;\;\; \frac{\partial w}{\partial \nu}  =0  & \quad\mbox{ on }\partial K , \\[0.2cm]
\;\;\; w(x)\to 0 &  \quad\mbox{ as }|x|\to \infty,
\end{cases}
\end{equation}
satisfies $w(x)\geq C_3 |x|^{2-N}$ in $\R^N\setminus K$.
\item[{\rm (iv)}] Any function $w\in C^2(\overline{\R^N\setminus K})$ which is a solution of 
\begin{equation}\label{in4}
\begin{cases}
-\Delta w  \leq   h(x) \, , w>0 & \quad\mbox{ in }\R^N\setminus K , \\[0.2cm]
\ds \;\;\; \frac{\partial w}{\partial \nu}  =0  & \quad\mbox{ on }\partial K , \\[0.2cm]
\;\;\; w(x)\to 0 &  \quad\mbox{ as }|x|\to \infty,
\end{cases}
\end{equation}
satisfies $w(x)\leq C_4 |x|^{2-N}$ in $\R^N\setminus K$.
\end{enumerate}
\end{lemma}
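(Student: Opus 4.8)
The plan is to obtain all four items from one comparison scheme. For each of the inequalities \eqref{in1}--\eqref{in4} I would compare the given $w$ with the \emph{exact} solution $z$ of the associated equality problem, carrying the same homogeneous Neumann condition on $\partial K$ and the same decay at infinity; such a $z$ is produced, and its sharp two-sided decay rate identified, by Lemma \ref{a3} for items (i)--(ii) and by Lemmas \ref{a4}--\ref{a5} for items (iii)--(iv). Crucially, $z$ is a single fixed function depending only on the data $N,p,q,m,s$ (and on $K$, $k$), not on $w$ or $\lambda$, so the constants $C_3,C_4$ will come out as the comparability constants between $z$ and $\psi$ (resp.\ $|x|^{2-N}$). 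The comparison itself is the exterior-domain principle Lemma \ref{com2} used with $a\equiv 0$, $b\equiv 1$: since $w$ and $z$ then share the \emph{same} Neumann data on $\partial K$ and have ordered (here equal) limits at infinity, Lemma \ref{com2} applies with \emph{no} a priori control of $w$ on $\partial K$ --- this ``boundary-free'' feature is exactly what makes $C_3,C_4$ independent of $\lambda$.

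I would handle (i) as follows. By Lemma \ref{a3} applied with $\phi(x)=|x|^{-\alpha}$ and $\alpha=m(N-2)>2$ there is a unique $z\in C^{2,\gamma}_{loc}(\overline{\R^N\setminus K})$ solving $-\Delta z=|x|^{-\alpha}z^{-s}$, $z>0$, $\partial_\nu z=0$ on $\partial K$, $z(x)\to 0$, with the decay \eqref{lueq1}; comparing \eqref{lueq1} (for $\alpha=m(N-2)$) branch by branch with \eqref{ps} gives $z\simeq\psi$, hence $z\ge c_0\psi$ for some $c_0=c_0(N,p,q,m,s)>0$. For any $w$ satisfying \eqref{in1} one has $\Delta w+|x|^{-\alpha}w^{-s}\le 0=\Delta z+|x|^{-\alpha}z^{-s}$ in $\R^N\setminus K$, $\partial_\nu w=0=\partial_\nu z$ on $\partial K$, and $w,z\to 0$, so Lemma \ref{com2} (with $\Psi=|x|^{-\alpha}$, $g(t)=t^{-s}$, $w_1=w$, $w_2=z$) gives $w\ge z\ge c_0\psi$. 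Item (ii) is the mirror image: for $w$ satisfying \eqref{in2}, $\Delta z+|x|^{-\alpha}z^{-s}=0\le\Delta w+|x|^{-\alpha}w^{-s}$, so taking $w_1=z$, $w_2=w$ in Lemma \ref{com2} gives $w\le z\le C_0\psi$ with $C_0=C_0(N,p,q,m,s)$.

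For (iii), the standing hypothesis $k>N$ of Theorem \ref{th2} and Lemma \ref{a4} (with $\beta=k$) give the unique solution $z$ of $-\Delta z=|x|^{-k}$ with the prescribed boundary behaviour and $z\simeq|x|^{2-N}$, whence $z\ge c_0|x|^{2-N}$; the same comparison ($g\equiv 1$, $\Psi=|x|^{-k}$, $w_1=w$ from \eqref{in3}, $w_2=z$) yields $w\ge z\ge c_0|x|^{2-N}$. For (iv) I would first pin down the decay of $h(x)=|x|^{-p(N-2)}\psi(x)^{-q}+|x|^{-k}$: inserting each branch of \eqref{ps} and using the lower bounds imposed on $p$ there together with $k>N$, one gets $h\simeq|x|^{-\beta}$ with $\beta>N$ in the first and third branches --- namely $\beta=\min\{k,(p-q)(N-2)\}$, resp.\ $\beta=\min\{k,\,p(N-2)-q\frac{m(N-2)-2}{1+s}\}$ --- and $h\simeq|x|^{-N}\log^{-q/(1+s)}(|x|/r_0)$ with $q/(1+s)>1$ in the second. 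In the first two cases Lemma \ref{a4}, and in the third Lemma \ref{a5} (with $\theta=q/(1+s)>1$), produces the unique solution $z$ of $-\Delta z=h$ with $z\simeq|x|^{2-N}$, so $z\le C_0|x|^{2-N}$; comparing $w$ from \eqref{in4} with $z$ (now $w_1=z$, $w_2=w$ in Lemma \ref{com2}) gives $w\le z\le C_0|x|^{2-N}$. One then sets $C_3$ to be the smallest, and $C_4$ the largest, of the finitely many constants obtained above.

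The conceptual point to get right is the one in the first paragraph --- that coincident Neumann data make Lemma \ref{com2} applicable without any boundary bound on $w$, which is what the later fixed-point step (Step 2 of the proof of Theorem \ref{th2}) needs. The bulk of the labour is book-keeping: matching \eqref{lueq1} to the branches of \eqref{ps} in (i)--(ii), and, in (iv), deciding in each branch whether $|x|^{-k}$ or the $\psi$-term is dominant and, in the logarithmic branch, checking that the exponent $q/(1+s)$ exceeds $1$ --- which is exactly the hypothesis $q>1+s$ of condition (ii) of Theorem \ref{th2}. The only genuinely delicate spot is the threshold $m=s+\frac{N}{N-2}$ in the first branch of \eqref{ps}, where the exact solution of $-\Delta z=|x|^{-\alpha}z^{-s}$ carries a factor $\log^{1/(1+s)}(|x|/r_0)$ that $\psi=|x|^{2-N}$ lacks; (i) remains true (the logarithm is bounded below on compacts and grows at infinity) but (ii) forces one to read that branch with the strict inequality $m>s+\frac{N}{N-2}$ --- equivalently, to carry the logarithm in $\psi$ at the threshold.
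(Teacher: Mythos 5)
Your proposal follows the same route as the paper: in each item you compare $w$ with the unique solution $z$ of the corresponding equality problem supplied by Lemma \ref{a3} (items (i)--(ii)), Lemma \ref{a4}, or Lemma \ref{a5} (items (iii)--(iv)), identify $z\simeq\psi$ or $z\simeq|x|^{2-N}$ branch by branch, and then invoke the exterior comparison Lemma \ref{com2} with pure Neumann data $a\equiv 0$, $b\equiv 1$ so that no boundary bound on $w$ is needed and the resulting constants are $\lambda$-independent; the paper's own proof is exactly this, only more compressed. Your closing observation about the threshold $m=s+\tfrac{N}{N-2}$ is a legitimate catch, not mere pedantry: in that borderline case Lemma \ref{a3} gives $z\simeq|x|^{2-N}\log^{1/(1+s)}(|x|/r_0)$, which is not $\lesssim |x|^{2-N}$, so (ii) genuinely fails if the first branch of \eqref{ps} is taken at face value with $m\geq s+\tfrac{N}{N-2}$; this is consistent with the paper writing the strict inequality $m>s+\tfrac{N}{N-2}$ in \eqref{ps0} but the non-strict one in \eqref{ps} and \eqref{beh2}, and your suggested fix (read the first branch with strict inequality, or carry the logarithm in $\psi$ at the threshold) is the right repair.
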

\begin{proof}
Part (i)-(ii) follow from Lemma \ref{a3} and the comparison principle in Lemma  \ref{com2}. Note also that
$$
\begin{aligned}
h(x)=|x|^{-k}+
\begin{cases}
\ds |x|^{-(N-2)(p-q)} & \mbox{if } 
\begin{cases}
m\geq s+\frac{N}{N-2}\, , \\[0.2cm]
p>q+\frac{N}{N-2}\, ,
\end{cases}\\[0.7cm]
\ds |x|^{-N}\log^{\frac{q}{1+s}} \Big(\frac{|x|}{r_0}\Big)  & \mbox{if } 
\begin{cases}
m= s+\frac{N}{N-2}\, , \\[0.2cm]
p=q+\frac{N}{N-2} \mbox{ and } q>1+s\, ,
\end{cases}\\[0.7cm]
\ds |x|^{-p(N-2)+q\frac{m(N-2)-2}{1+s}} &\mbox{if }
\begin{cases}
\frac{2}{N-2}<m< s+\frac{N}{N-2}\, ,\\[0.3cm]
p>\frac{q}{1+s}\Big(m-\frac{2}{N-2}\Big)+\frac{N}{N-2}.
\end{cases}
\end{cases}
\end{aligned}
$$

If condition (i) or (iii) in Theorem \ref{th2} holds, then $h(x)\simeq |x|^{-\beta}$ in $\R^N\setminus K$, where $\beta>0$ is given by \eqref{bb}. By conditions (i) and (iii) in Theorem \ref{th2} we have $\beta>N$. Now, by Lemma \ref{a4} and the comparison principle in Lemma  \ref{com2} we deduce the statements (iii) and (iv) in our Lemma \ref{cc}.

Finally, if condition (ii) in Theorem \ref{th2} holds, then $h(x)\simeq |x|^{-N}\log^{-\theta}$ in $\R^N\setminus K$, with $\theta=\frac{q}{1+s}>1$.  Part (iii)-(iv) follow now from Lemma \ref{a5} and Lemma \ref{com2}.
\end{proof}
We have introduced the constants $C_2>C_1>0$ in \eqref{ro} and $C_4>C_3>0$ in Lemma \ref{cc}. 
Let next define the new constant $C_5$ by
$$
C_5=C_1^{\frac{m}{1+s}} C_c^{1+\frac{m}{1+s}}>0.
$$
Set
\begin{equation}\label{la1}
\lambda^*=\Bigg( \frac{C_5^q}{(2C_4)^p C_2^{p-1}} \Bigg)^{\frac{1}{(p-1)(1-\sigma)}}>0
\end{equation}
and define
\begin{equation}\label{la2}
\begin{aligned}
D& =C_1C_3\lambda \, , \quad &E=2C_2C_4 \lambda\, , \\[0.2cm]
F& =\ds C_3D^{\frac{m}{1+s}} \, , \quad &G\ds =C_4 E^{\frac{m}{1+s}}.
\end{aligned}
\end{equation}
\medskip

\noindent{\bf Claim:} We have
\begin{equation}\label{la3}
C_4(E^p F^{-q} +\lambda C_2)\leq E \quad\mbox{ for all }0<\lambda\leq \lambda^*.
\end{equation}
Indeed, using \eqref{la2} we see that \eqref{la3} is equivalent to
\begin{equation}\label{la4}
C_4E^p F^{-q}\leq E -\lambda C_2C_4=\lambda C_2 C_4 \Longleftrightarrow E^pF^{-q}\leq C_2\lambda. 
\end{equation}
Using \eqref{la2} we have that \eqref{la4} is equivalent to
$$
(2C_2C_4)^{p} C_3^{-q}(C_1C_3)^{-\frac{mq}{1+s}} \lambda^{p-\frac{mq}{1+s}}\leq C_2\lambda,
$$
that is,
$$
\lambda^{p-1-\frac{mq}{1+s}}\leq \frac{C_5^q}{ (2C_4)^p C_2^{p-1}}.
$$
Since $p-1-\frac{mq}{1+s}=p-1-\sigma(p-1)=(p-1)(1-\sigma)>0$, we see that in light of \eqref{la1}, the above inequality is equivalent to $\lambda\leq \lambda^*$ and concludes the proof of \eqref{la3}.

With the constants $\lambda^*>0$ and $D,E,F,G>0$ defined in \eqref{la1} and \eqref{la2} respectively, we can now proceed to prove \eqref{hal}.

Let $(u, v)\in \mathcal{A}$. From \eqref{TGM} and \eqref{ro} we have
$$
-\Delta (Tu)\geq \lambda \rho(x)\geq C_1\lambda |x|^{-k}\quad\mbox{ in }\R^N\setminus K.
$$
Hence, $w=\frac{Tu}{C_1\lambda}$ satisfies \eqref{in3}. 
By Lemma \ref{cc}(iii) and \eqref{la2}, it follows that
\begin{equation}\label{la5}
w=\frac{Tu}{C_1\lambda}\geq C_3|x|^{2-N} \Longrightarrow Tu\geq  C_1 C_3 \lambda |x|^{2-N}=D|x|^{2-N} \quad\mbox{ in }\R^N\setminus K.
\end{equation}
From \eqref{TGM} and the definition of $\mathcal{A}$ in \eqref{aset}  we find
$$
\begin{aligned}
-\Delta (Tu)&\leq \frac{u^p}{v^q}+C_2\lambda |x|^{-k}\\[0.2cm]
&\leq E^p F^{-q}|x|^{-p(N-2)}\psi(x)^{-q}+C_2\lambda |x|^{-k}\\[0.2cm]
&\leq \big( E^p F^{-q}+C_2\lambda\big) h(x) \quad\mbox{ in }\R^N\setminus K,
\end{aligned}
$$
where $h(x)=|x|^{-p(N-2)}\psi(x)^{-q}+|x|^{-k}$ is defined in Lemma \ref{cc}. Thus, 
$$
w=\frac{Tu}{E^p F^{-q}+C_2\lambda} \quad\mbox{ satisfies \eqref{in4}}. 
$$
By Lemma \ref{cc}(iv), it follows that
$$
w=\frac{Tu}{E^p F^{-q}+C_2\lambda} \leq C_4 |x|^{2-N} \quad\mbox{ in }\R^N\setminus K.
$$
By \eqref{la3} and the above estimate we derive
\begin{equation}\label{la6}
Tu\leq C_4(E^p F^{-q}+C_2\lambda)|x|^{2-N}\leq E|x|^{2-N} \quad\mbox{ in }\R^N\setminus K.
\end{equation}
From \eqref{la5} and \eqref{la6} we now obtain $D|x|^{2-N}\leq Tu\leq E |x|^{2-N}$ in $\R^N\setminus K$.

We next show that $F\psi(x) \leq Tv\leq G\psi(x)$ in $\R^N\setminus K$. First, from $u\geq D|x|^{2-N}$ in $\R^N\setminus K$ and the definition of $Tv$ in \eqref{TGM} we have
$$
-\Delta (Tv)=u^m (Tv)^{-s}\geq D^m |x|^{-\alpha} (Tv)^{-s}\quad\mbox{ in }\R^N\setminus K,
$$
where $\alpha=m(N-2)>2$. A direct calculation  shows that $w=D^{-\frac{m}{1+s}} Tv$ satisfies \eqref{in1} and thus, by Lemma \ref{cc}(i) and \eqref{la2} we deduce
\begin{equation}\label{la7}
w=D^{-\frac{m}{1+s}} Tv\geq C_3 \psi(x)\Longrightarrow Tv\geq C_3 D^{\frac{m}{1+s}}\psi(x)=F\psi(x) \quad\mbox{ in }\R^N\setminus K.
\end{equation}
Finally, from $u\leq E|x|^{2-N}$ in $\R^N\setminus K$ and the definition of $Tv$ in \eqref{TGM} we obtain
$$
-\Delta (Tv)=u^m (Tv)^{-s}\leq E^m |x|^{-\alpha} (Tv)^{-s}\quad\mbox{ in }\R^N\setminus K.
$$
This implies that $w=E^{-\frac{m}{1+s}} Tv$ satisfies \eqref{in2} and  by Lemma \ref{cc}(ii) and \eqref{la2} we find
\begin{equation}\label{la8}
w=E^{-\frac{m}{1+s}} Tv\leq C_4 \psi(x)\Longrightarrow Tv\leq C_4 E^{\frac{m}{1+s}}\psi(x)=G\psi(x) \quad\mbox{ in }\R^N\setminus K.
\end{equation}
From \eqref{la7}-\eqref{la8} we deduce $ F\psi(x)\leq Tv\leq G\psi(x)$ in $\R^N\setminus K$, and this concludes our proof in Step 2.

\medskip

\noindent{\bf Step 3:}  Existence of a solution  $(u,v)\in \mathcal{A}$ to \eqref{GM0}.

We have seen that $\mathcal{H}(\mathcal{A}) \subset \mathcal{A}$. Due to the lack of a compactness argument, we cannot apply directly the Schauder Fixed Point Theorem on $\mathcal{A}$. To simplify our further notations, let us assume $K\subset B_1$.
For any $n\geq 1$ we introduce
$$
\mathcal{A}_n=\left\{(u,v)\in C^{0,\gamma}(\overline{B_n\setminus K})\times C^{0,\gamma}(\overline{B_n\setminus K}): 
\begin{aligned}
D|x|^{2-N} & \leq u\leq  E |x|^{2-N}\\
F\psi(x)  & \leq v\leq  G \psi(x)
\end{aligned} \; \mbox{ in } \overline{B_n \setminus K}\right\},
$$
where $E>D>0$ and $G>F>0$ are the constants that satisfy \eqref{la2}.
\smallskip

Let $(u,v)\in \mathcal{A}_n$. We fix $(\widetilde u, \widetilde v)\in \mathcal{A}$, a $C^{0,\gamma}_{loc}$-extension of $(u, v)$ to the whole $\overline{\R^N\setminus K}$.  Let $(U, V)=(T\widetilde u, T\widetilde v)$ be the solution of  \eqref{TGM} with the input data $(\widetilde u, \widetilde v)\in \mathcal{A}$.  We define
$$
\mathcal{H}_n:\mathcal{A}_n\to C(\overline{B_n\setminus K})\times C(\overline{B_n\setminus K})\quad\mbox{ by }\quad  
\mathcal{H}_n[u, v]:=\Big(U\!\mid_{\overline{B_n\setminus K}}, V\!\mid_{\overline{B_n\setminus K}}\Big).
$$
One can see that $\mathcal{H}_n$ is continuous  in the sense that if $\{(u_k, v_k)\}_{k\geq 1}$ converges to $(u, v)$ in $C^{0,\gamma}(\overline{B_n\setminus K})\times C^{0,\gamma}(\overline{B_n\setminus K})$ as $k\to \infty$, then, by the continuous dependence on data for semilinear PDEs, one has $T\widetilde u_k\to T\widetilde u$ and $T\widetilde v_k\to T\widetilde v$ in $C^{0,\gamma}(\overline{B_n\setminus K})$ as $k\to \infty$, so $\mathcal{H}_n$ is continuous.

Since $\mathcal{H}(\mathcal{A})\subset \mathcal{A}$ we have $\mathcal{H}_n(\mathcal{A}_n)\subset \mathcal{A}_n$ for all $n\geq 1$. Furthermore, by standard elliptic regularity, $\mathcal{H}_n$ maps $\mathcal{A}_n$ into $C^1(\overline{B_n\setminus K})\times C^1(\overline{B_n\setminus K})$ which is compactly embedded into $C^{0,\gamma}(\overline{B_n\setminus K})\times C^{0,\gamma}(\overline{B_n\setminus K})$ and thus into $\mathcal{A}_n$. This shows that  
$$
\mathcal{H}_n: \mathcal{A}_n\to \mathcal{A}_n\quad\mbox{ is compact.}
$$
Hence,  $\mathcal{H}_n$ fulfils all conditions in the Schauder Fixed Point Theorem  and thus, there exists $(u_n, v_n)\in \mathcal{A}_n$ a fixed point of $\mathcal{H}_n$. This yields 
\begin{equation}\label{TGMn}
\begin{cases}
\displaystyle   -\Delta u_n=\frac{u_n^p}{v_n^q}+\lambda \rho(x) \,, u_n>0 &\quad\mbox{ in }B_n\setminus K,\\[0.15in]
\displaystyle   -\Delta v_n=\frac{u_n^m}{v_n^s}   \,, v_n>0 &\quad\mbox{ in } B_n\setminus K,\\[0.15in]
\displaystyle \;\;\; \frac{\partial u_n}{\partial \nu}=\frac{\partial v_n}{\partial \nu}=0 &\quad\mbox{ on }\partial K.
\end{cases}
\end{equation}
From \eqref{TGMn} we have 
$$
-\Delta u_n=f_n(x) \quad\mbox{ and }\quad -\Delta v_n=h_n(x)\quad\mbox{ in }B_n\setminus K,
$$
where, by the definition of $\mathcal{A}_n$, for all $x\in B_n\setminus K$ we have
\begin{equation}\label{en}
\begin{cases}
\ds \; f_n(x)=\frac{u_n^p}{v_n^q}+\lambda \rho(x)\leq (E^pF^{-q}+C_2\lambda )(|x|^{-p(N-2)}\psi(x)^{-q}+|x|^{-k})\\[0.1in]
\ds \; h_n(x)=\frac{u_n^m}{v_n^s}\leq E^m F^{-s} |x|^{-m(N-2)}\psi(x)^{-s}.
\end{cases}
\end{equation}
The above coefficients $E,F,C_2>0$ are independent of $n$ and $f_n,h_n$ are bounded in $L^r(B_n\setminus K)$ for $r>1$ large. 
By standard elliptic estimates, we now obtain that $\{(u_n, v_n)\}_{n\geq 1}$ is bounded in $W^{2,r}(B_n\setminus K)\times W^{2,r}(B_n\setminus K)$ for all $r>1$ large. By the Sobolev embeddings, it follows that $\{(u_n, v_n)\}_{n\geq 1}$ is bounded in 
$C^{1,\gamma}(\overline{B_1\setminus K})\times C^{1,\gamma}(\overline{B_1\setminus K})$. Since the embedding $C^{1,\gamma}(\overline{B_1\setminus K}) \hookrightarrow C^{1}(\overline{B_1\setminus K})$ is compact, it follows that $\{(u_n, v_n)\}_{n\geq 1}$ has a convergent subsequence $\{(u^1_n, v^1_n)\}_{n\geq 1}$ in $C^{1}(\overline{B_1\setminus K})\times C^1(\overline{B_1\setminus K})$. Next, we proceed similarly with the sequence $\{(u^1_n, v^1_n)\}_{n\geq 2}$ which, by elliptic estimates, it is bounded in $W^{2,r}(\overline{B_2\setminus K})\times W^{2,r}(\overline{B_2\setminus K})$ for all $r>1$ large and thus, has a convergent subsequence $\{(u^2_n, v^2_n)\}_{n\geq 2}$ in $C^{1}(\overline{B_2\setminus K})\times C^{1}(\overline{B_2\setminus K})$. 

To summarise, for all $m\geq 1$ we may construct a sequence 
$$
\{(u^m_n, v^m_n)\}_{n\geq m}\subset \{(u^{m-1}_n, v^{m-1}_n)\}_{n\geq m}
$$ which is convergent in $C^{1}(\overline{B_m\setminus K})\times C^{1}(\overline{B_m\setminus K})$. Now, the diagonal sequence $\{(u^n_n, v^n_n)\}_{n\geq 1}$ will be convergent in $C^{1}_{loc}(\overline{\R^N\setminus K})\times C^{1}_{loc}(\overline{\R^N\setminus K})$. Its limit $(u,v)$ is a $C^1$-weak solution of \eqref{GM0}. Note that $(u_n, v_n)\in \mathcal{A}_n$ for all $n\geq 1$ implies $(u,v)\in \mathcal{A}$. In particular, $u$ satisfies \eqref{beh1} and $u(x), v(x)\to 0$ as $|x|\to \infty$. From \eqref{en} we have
$$
\begin{aligned}
\frac{u^p}{v^q}+\lambda \rho(x)& \leq (E^pF^{-q}+C_2\lambda )(|x|^{-p(N-2)}\psi(x)^{-q}+|x|^{-k})\\[0.1in]
\frac{u^m}{v^s} & \leq E^m F^{-s} |x|^{-m(N-2)}\psi(x)^{-s}
\end{aligned}
\quad\mbox{ in }\R^N\setminus K.
$$
By regularity theory (see \cite{ADN1,ADN2}), it follows that $(u, v)\in C^{2,\gamma}_{loc}(\overline{\R^N\setminus K})\times C^{2,\gamma}_{loc}(\overline{\R^N\setminus K})$. This concludes our proof.

\qed

\begin{remark}\label{remm}
{\rm From the asymptotic behaviour of solutions $u$ and $v$ which is given in \eqref{beh1} and \eqref{beh2} respectively, we can easily see that if condition (ii) or (iii) in Theorem \ref{th2} holds, then 
$$
v(x)>>u(x)\quad\mbox{ for } |x| \mbox{ large}.
$$

If condition (i) in Theorem \ref{th2} holds, we constructed the solution $(u,v)$ with the property $u(x)\simeq v(x)\simeq |x|^{2-N}$ as $|x|\to \infty$. Let us notice that $(u,v)\in \mathcal{A}$ (see \eqref{aset}) where the coefficients $D,E,F,G$ defined in \eqref{la2} satisfy
$$
D\simeq E\simeq \lambda\quad\mbox{ and }\quad F\simeq G\simeq \lambda^{\frac{m}{s+1}}\quad\mbox{ as }\lambda\to 0.
$$
Since condition (i) in Theorem \ref{th2} implies $m\geq   s+\frac{N}{N-2}>s+1$, it follows that 
$$
D\simeq \lambda>>\lambda^{\frac{m}{s+1}}\simeq G \quad\mbox{ as }\lambda\to 0,
$$
and thus, when $\lambda>0$ is small one has $u(x)>v(x)$ as $|x|$ is large.
}

\end{remark}

\section{Proof of Theorem \ref{th3}}

Assume first that $(u,v)$ is a positive solution to \eqref{GM0} and $u$ satisfies \eqref{beh3}. Then $v$ satisfies \eqref{inaa1} in Lemma \ref{a3} where $\phi(x)=u(x)^m\simeq |x|^{-am}$ in $\R^N\setminus K$. By Lemma \ref{a3} it follows that $am>2$ and that $v$ satisfies the asymptotic behaviour \eqref{beh4}. Let 
\begin{equation}\label{fi1}
\varphi(x)= \begin{cases}
\ds |x|^{2-N} & \quad \mbox{ if } m>\frac{N+s(N-2)}{a} \\[0.2cm]
\ds |x|^{2-N}\log^{\frac{1}{1+s}} \Big(\frac{|x|}{r_0}\Big)  &\quad \mbox{ if } m=\frac{N+s(N-2)}{a} \\[0.2cm]
\ds |x|^{-\frac{ma-2}{1+s}} &\quad \mbox{ if } \frac{2}{a}<m< \frac{N+s(N-2)}{a}
\end{cases}
\quad\mbox{ for all } x\in \R^N\setminus K.
\end{equation}
Now, \eqref{beh4} yields $v(x)\simeq \varphi(x)$ in $\R^N\setminus K$.
Using this fact in the first equation of \eqref{GM0} we derive that $u$ satisfies the problem \eqref{corw1} where
\begin{equation}\label{beh5}
h(x)\simeq |x|^{-ap}\varphi(x)^{-q}+|x|^{-a-2}\quad\mbox{ in }\R^N\setminus K.
\end{equation}
\noindent Assume first $m> \frac{N+s(N-2)}{a}$. Then by \eqref{fi1} and \eqref{beh5} we have $h(x)\simeq |x|^{-\beta}$ in $\R^N\setminus K$, where
$$
\beta=\min\Big\{ap-q(N-2), a+2  \Big\}.
$$
Since $u$ satisfies \eqref{beh3}, by Lemma \ref{a4} we deduce $\beta=a+2<N$. This yields  $ap-q(N-2)\geq a+2$, thus $p\geq q\frac{N-2}{a}+1+\frac{2}{a}$. Hence,  condition (i) in the statement of Theorem \ref{th3} holds.

\noindent Assume now $\frac{2}{a}<m<\frac{N+s(N-2)}{a}$. As before, we have $h(x)\simeq |x|^{-\beta}$ in $\R^N\setminus K$, where
$$
\beta=\min\Big\{ap-\frac{q}{1+s}\big(am-2\big), a+2  \Big\}.
$$
Further, \eqref{beh3} and Lemma \ref{a4} yield  $\beta=a+2<N$, so $p\geq \frac{q}{1+s}\Big(m-\frac{2}{a}\Big)+1+\frac{2}{a}$.
This shows that condition (ii) in the statement of Theorem \ref{th3} holds.

\noindent Finally, in the case $m=\frac{N+s(N-2)}{a}$ we have from \eqref{fi1} and \eqref{beh5}  that
$$
h(x)\simeq |x|^{-ap+q(N-2)}\log^{-\frac{q}{1+s}} \Big(\frac{|x|}{r_0}\Big)+|x|^{-a-2}\quad\mbox{ in }\R^N\setminus K.
$$
If $ap-q(N-2)\geq a+2$ then $h(x)\simeq |x|^{-a-2}$ in $\R^N\setminus K$. By Lemma \ref{a4} we deduce $u(x)\simeq |x|^{-a}$.

If $ap-q(N-2)< a+2$, then $h(x)\geq C_\varepsilon |x|^{-a-2+\varepsilon}$ in $\R^N\setminus K$, for some $\varepsilon>0$ small and $C_\varepsilon>0$. Similar to Lemma \ref{a4} we derive $u(x)\geq c|x|^{-a+\varepsilon}$ in $\R^N\setminus K$, which contradicts \eqref{beh3}. Hence, $ap-q(N-2)\geq a+2$, which yields $p\geq q\frac{N-2}{a}+1+\frac{2}{a}$ and thus condition (i) in Theorem \ref{th3} holds.

Assume now that one of conditions (i) and (ii) hold. The construction of a positive solution $(u,v)$ to \eqref{GM0} that satisfies \eqref{beh3} follows the same approach as in Theorem \ref{th2}. We define 
$$
\mathcal{A}=\Bigg\{(u, v)\in C^{0,\gamma}_{loc}(\overline{\R^N\setminus K})\times C^{0,\gamma}_{loc}(\overline{\R^N\setminus K}): \;
\begin{aligned}
D|x|^{-a}&\leq u(x)\leq E |x|^{-a}\\
F\varphi (x)&\leq v(x)\leq G \varphi (x)
\end{aligned} \; \mbox{ in }\R^N\setminus K\Bigg\},
$$
where $\varphi(x) $ is given by \eqref{fi1} and $D,E,F,G>0$ are suitable fixed constants. From now on we follow step by step the proof of Theorem \ref{th2}.
\qed

\section{Further extensions}

The approach we developed in this work can be  extended to more other systems with various sign of exponents.
Consider for instance the systems
\begin{equation}\label{GMe1}
\begin{cases}
\displaystyle   -\Delta u=\frac{1}{u^p v^q}+\lambda \rho(x) \,, u>0 &\quad\mbox{ in }\R^N\setminus K,\\[0.1in]
\displaystyle   -\Delta v=\frac{u^m}{v^s}  \,, v>0 &\quad\mbox{ in }\R^N\setminus K,\\[0.1in]
\displaystyle \;\;\; \frac{\partial u}{\partial \nu}=\frac{\partial v}{\partial \nu}=0 &\quad\mbox{ on }\partial K,\\[0.1in]
\displaystyle \;\;\;  u(x), v(x)\to 0 &\quad\mbox{ as }|x|\to \infty,
\end{cases}
\end{equation}
\begin{equation}\label{GMe2}
\begin{cases}
\displaystyle   -\Delta u=\frac{1}{u^p v^q}+\lambda \rho(x) \,, u>0 &\quad\mbox{ in }\R^N\setminus K,\\[0.1in]
\displaystyle   -\Delta v=\frac{1}{u^m v^s}  \,, v>0 &\quad\mbox{ in }\R^N\setminus K,\\[0.1in]
\displaystyle \;\;\; \frac{\partial u}{\partial \nu}=\frac{\partial v}{\partial \nu}=0 &\quad\mbox{ on }\partial K,\\[0.1in]
\displaystyle \;\;\;  u(x), v(x)\to 0 &\quad\mbox{ as }|x|\to \infty,
\end{cases}
\end{equation}
and
\begin{equation}\label{GMe3}
\begin{cases}
\displaystyle   -\Delta u=\frac{v^q}{u^p}+\lambda \rho(x) \,, u>0 &\quad\mbox{ in }\R^N\setminus K,\\[0.1in]
\displaystyle   -\Delta v=\frac{u^m}{v^s}  \,, v>0 &\quad\mbox{ in }\R^N\setminus K,\\[0.1in]
\displaystyle \;\;\; \frac{\partial u}{\partial \nu}=\frac{\partial v}{\partial \nu}=0 &\quad\mbox{ on }\partial K,\\[0.1in]
\displaystyle \;\;\;  u(x), v(x)\to 0 &\quad\mbox{ as }|x|\to \infty,
\end{cases}
\end{equation}
where $\rho\in C^{0,\gamma}_{loc}(\overline{\R^N\setminus K})$, $\gamma\in (0,1)$, is a nonnegative function and $p,q,m,s, \lambda>0$. 

Using an approach from dynamical systems, positive radial solutions of the following system 
$$
\begin{cases}
-\Delta u=|x|^{\alpha} u^a v^b \\[0.1cm]
-\Delta v=|x|^{\beta} u^c v^d
\end{cases}
\quad\mbox{ in }\Omega,
$$
were studied in \cite{BG10} for $\Omega=\R^N$ and in \cite{BR96} for $\Omega=\R^N\setminus\{0\}$ where $a,b,c,d, \alpha, \beta\in \R$ (see also \cite{BP01} for further extensions to quasilinear elliptic operators). Our systems \eqref{GMe1}-\eqref{GMe3} are posed in an exterior domain $\R^N\setminus K$ and complemented by Neumann boundary condition on $\partial K$. Thus, \eqref{GMe1}-\eqref{GMe3} are not suited for radial symmetric solutions due to the general boundary of $K$.

Using Lemma \ref{a1} we obtain the following nonexistence result.

\begin{theorem}{\rm (Nonexistence)}\label{the1}

Assume $N\geq 2$ and $p,q,m,s>0$, $\lambda\geq 0$. 
\begin{enumerate}
\item[{\rm (i)}] The systems \eqref{GMe1} and \eqref{GMe2} have no positive solutions.
\item[{\rm (ii)}] If one of the following conditions hold
\begin{enumerate}
\item[{\rm (ii1)}] $N=2$;
\item[{\rm (ii2)}] $N\geq 3$ and $\ds \min\{ q,m\}\leq \frac{2}{N-2}$;
\end{enumerate} 
 then \eqref{GMe3} has no positive solutions.
 \end{enumerate}
\end{theorem}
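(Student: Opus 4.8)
The plan is to reduce each case to the scalar nonexistence result of Lemma \ref{a1}: on a suitable exterior domain $\R^N\setminus B_R$ with $K\subset B_R$, I would produce for one of the two components a differential inequality of the form $-\Delta w\geq A(|x|)g(w)$ and then quote that lemma. Only the two PDEs and the decay at infinity enter; the Neumann condition and $(K2)$ are irrelevant here.

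For part (i) the key point is that in \eqref{GMe1} and \eqref{GMe2} the first equation has a \emph{bounded-below} source. Since the right-hand sides of both equations are nonnegative, $u$ and $v$ are positive, and being continuous on $\overline{\R^N\setminus K}$ and vanishing at infinity they are bounded, say $0<u,v\leq M$. Hence $-\Delta u=\dfrac{1}{u^{p}v^{q}}+\lambda\rho(x)\geq M^{-(p+q)}=:c_{0}>0$ on $\R^N\setminus B_R$. I would then apply Lemma \ref{a1} with the constant data $A\equiv c_{0}$, $g\equiv 1$: for $N\geq 3$ part (i) applies because $\int_{1}^{\infty}t\,c_{0}\,dt=\infty$ and $\lim_{t\to 0^{+}}g(t)=1>0$; for $N=2$ part (ii) applies because $g$ is nonincreasing and $\liminf_{t\to\infty}e^{2t}c_{0}>0$. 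Either way no such $u$ can exist, which proves (i). (For $N=2$ one may instead note directly that Lemma \ref{lest}(i) forces $u\geq c>0$ on $\R^{2}\setminus B_R$, contradicting $u(x)\to 0$.)

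For part (ii) I would work with \eqref{GMe3}, where both sources are nonnegative, so $u,v$ are positive superharmonic in $\R^N\setminus K$. If $N=2$, Lemma \ref{lest}(i) again gives $u\geq c>0$ on $\R^{2}\setminus B_R$ with $K\subset B_R$, contradicting $u(x)\to 0$. If $N\geq 3$ and $m\leq\frac{2}{N-2}$, Lemma \ref{lest}(ii) gives $u\geq C|x|^{2-N}$ on $\R^N\setminus B_R$, whence $-\Delta v=u^{m}v^{-s}\geq C^{m}|x|^{-m(N-2)}v^{-s}$ there; I then invoke Lemma \ref{a1}(i) with $A(t)=C^{m}t^{-m(N-2)}$ and the convex function $g(t)=t^{-s}$ (so $\lim_{t\to 0^{+}}g(t)=+\infty>0$), the hypothesis $\int_{1}^{\infty}tA(t)\,dt=\infty$ being precisely the condition $m(N-2)\leq 2$. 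The remaining sub-case $q\leq\frac{2}{N-2}$ is handled symmetrically from the first equation: Lemma \ref{lest}(ii) applied to $v$ yields $v\geq C|x|^{2-N}$, hence $-\Delta u\geq \dfrac{v^{q}}{u^{p}}\geq C^{q}|x|^{-q(N-2)}u^{-p}$ on $\R^N\setminus B_R$, and Lemma \ref{a1}(i) with $A(t)=C^{q}t^{-q(N-2)}$, $g(t)=t^{-p}$ closes the argument since $q(N-2)\leq 2$.

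The argument is largely bookkeeping once the reductions are spotted. The only genuinely delicate point I anticipate is lining up the decay rate of $A$ with the integral threshold $\int_{1}^{\infty}tA(t)\,dt=\infty$ in Lemma \ref{a1} — that is, checking that the exponent hypotheses $m(N-2)\leq 2$ and $q(N-2)\leq 2$ are exactly what make that integral diverge — together with the observation, for (i), that boundedness of $u$ and $v$ turns the singular term $1/(u^{p}v^{q})$ into a uniform positive lower bound for $-\Delta u$, which is the ``no positive function on an exterior domain can have a uniformly positive Laplacian lower bound'' obstruction underlying Lemma \ref{a1}.
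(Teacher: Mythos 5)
Your proof is correct and follows essentially the same route as the paper, reducing to the scalar nonexistence Lemma \ref{a1} via the superharmonicity estimates of Lemma \ref{lest}. You make two harmless simplifications: in part (i) you bound both $u$ and $v$ so as to apply Lemma \ref{a1} with the constant $g\equiv 1$ (the paper bounds only $v$ and keeps $g(t)=t^{-p}$), and in (ii1) you note that $u\geq c>0$ from Lemma \ref{lest}(i) already contradicts the prescribed decay $u(x)\to 0$, thereby bypassing the appeal to Lemma \ref{a1}(ii) that the paper makes.
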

\begin{proof}
(i) Suppose $(u,v)$ is a positive solution of either \eqref{GMe1} or \eqref{GMe2}. Since $v(x)\to 0$ as $|x|\to\infty$, there exists $M>0$ such that $v(x)\leq M$ for all $x\in \R^N\setminus K$. Then, from \eqref{GMe1} and \eqref{GMe2} we find $-\Delta u\geq M^{-q} u^{-p}$ in $\R^N\setminus K$.  Now by Lemma \ref{a1}(i)-(ii)  with $A(x)\equiv M^{-q}$ it follows that \eqref{GMe1} and \eqref{GMe2} have no positive solutions.

(ii) Suppose $(u,v)$ is a positive solution of \eqref{GMe3}.  Since $u,v$ are superharmonic, by Lemma \ref{lest}  it follows that
$$
\begin{cases}
u, v\geq c &\quad\mbox{ in }\R^N\setminus K,  \mbox{ if }N=2,\\
u, v\geq c|x|^{2-N} & \quad\mbox{ in }\R^N\setminus K,  \mbox{ if }N\geq 3,
\end{cases}
$$
where $c>0$ is a constant.

(ii1) If $N=2$,  we derive from Lemma \ref{a1}(ii) that there are no positive solutions of \eqref{GMe3}.

 (ii2) If $N\geq 3$ we deduce that $u$ and $v$ satisfy 
$$
\begin{cases}
-\Delta u\geq c^q |x|^{-q(N-2)} u^{-p} &\quad \mbox{ in }\R^N\setminus K,\\[0.2cm]
-\Delta v\geq c^m |x|^{-m(N-2)} v^{-s} &\quad \mbox{ in }\R^N\setminus K.
\end{cases}
$$
Now if either $q\leq \frac{2}{N-2}$ or $m\leq \frac{2}{N-2}$, Lemma \ref{a1}(i) yields a contradiction.
\end{proof}

We next discuss the existence of positive solutions to \eqref{GMe3}. We present here an optimal result related to the existence of positive solutions $(u,v)$ to \eqref{GMe3} with minimal decay at infinity, that is,
\begin{equation}\label{uvmin}
u(x)\simeq v(x)\simeq |x|^{2-N} \quad\mbox{ in }\R^N\setminus K.
\end{equation} 
The interested reader may explore other features of \eqref{GMe3} such as existence of solutions with different asymptotic behaviour at infinity, as we did in Theorem \ref{th3}. Our last result in this section is stated and proved below.
 
\begin{theorem}{\rm (Existence of solutions to \eqref{GMe3} with  minimal decay)}\label{the2}

Assume $N\geq 3$,  $p,q,m,s>0$ and $\rho(x)$ satisfies \eqref{ro}.  Then, for some $\lambda>0$ the system  \eqref{GMe3} has a positive solution $(u,v)$ which satisfies \eqref{uvmin} if and only if 
\begin{equation}\label{copt}
k>N,\quad \; q>p+\frac{N}{N-2}\quad\mbox{ and }\quad m>s+\frac{N}{N-2}.
\end{equation}
\end{theorem}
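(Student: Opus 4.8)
The plan is to follow, with the common profile $|x|^{2-N}$ for both unknowns, the fixed-point scheme developed for Theorems \ref{th2} and \ref{th3}: Lemma \ref{a3} governs the $v$-equation, Lemma \ref{a4} governs the $u$-equation, and the comparison principle of Lemma \ref{com2} (together with the Neumann condition on $\partial K$, which is what makes the relevant comparison constants universal) does the bookkeeping.

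\emph{Necessity.} Suppose $(u,v)$ solves \eqref{GMe3} for some $\lambda>0$ and satisfies \eqref{uvmin}. Regarding $u$ as given, $v$ is a positive solution of $-\Delta v=\phi(x)\,v^{-s}$ in $\R^N\setminus K$ with $\phi(x)=u(x)^m\simeq|x|^{-\alpha}$, $\alpha=m(N-2)$, so Lemma \ref{a3} identifies $v$ with its unique solution and delivers the asymptotics \eqref{lueq1}; comparison with $v(x)\simeq|x|^{2-N}$ then forces $\alpha>N+s(N-2)$, i.e. $m>s+\frac{N}{N-2}$. For the first equation, inserting \eqref{uvmin} and \eqref{ro} shows that $u$ is a positive solution of $-\Delta u=\widetilde h(x)$ in $\R^N\setminus K$ with $\widetilde h=v^qu^{-p}+\lambda\rho\simeq|x|^{(p-q)(N-2)}+|x|^{-k}$. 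One must have $q>p$, for otherwise $\widetilde h$ is bounded below by a positive constant near infinity and Lemma \ref{a1}(i) (with $A\equiv c>0$, $g\equiv 1$) excludes any positive solution. Hence $\widetilde h(x)\simeq|x|^{-\beta}$ with $\beta=\min\{(q-p)(N-2),\,k\}$, and since $u(x)\simeq|x|^{2-N}$, Lemma \ref{a4} forces $\beta>N$ (the cases $2<\beta\le N$ give $u\simeq|x|^{2-\beta}$ or a logarithmic correction, and $\beta\le2$ is impossible, again by Lemma \ref{a1}(i)); therefore $k>N$ and $q>p+\frac{N}{N-2}$, which is \eqref{copt}.

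\emph{Sufficiency.} Assume \eqref{copt}. Put
$$
\mathcal{A}=\Bigg\{(u,v)\in C^{0,\gamma}_{loc}(\overline{\R^N\setminus K})\times C^{0,\gamma}_{loc}(\overline{\R^N\setminus K}):\
\begin{aligned}
D|x|^{2-N}&\le u(x)\le E|x|^{2-N}\\
F|x|^{2-N}&\le v(x)\le G|x|^{2-N}
\end{aligned}\ \Bigg\},
$$
and for $(u,v)\in\mathcal{A}$ let $\mathcal{H}[u,v]=(Tu,Tv)$ be the unique solution of the decoupled problem
$$
-\Delta(Tu)=\frac{v^q}{u^p}+\lambda\rho(x),\qquad -\Delta(Tv)=u^m(Tv)^{-s}\quad\text{in }\R^N\setminus K,
$$
with homogeneous Neumann conditions on $\partial K$ and $Tu,Tv\to0$ at infinity. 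This map is well defined: by \eqref{copt} the right-hand side of the $Tu$-equation is $\simeq|x|^{-\beta}$ with $\beta=\min\{(q-p)(N-2),k\}>N$, so Lemma \ref{a4} applies; and $u^m\simeq|x|^{-\alpha}$ with $\alpha=m(N-2)>N+s(N-2)$, so Lemma \ref{a3} applies to the singular $Tv$-equation. Arguing as in Lemma \ref{cc}, one isolates constants $C_4>C_3>0$ depending only on $N,k,p,q,m,s$ (not on $\lambda$) that govern the relevant one-sided comparisons, and then sets
$$
D=C_1C_3\lambda,\qquad E=2C_2C_4\lambda,\qquad F=C_3D^{\frac{m}{1+s}},\qquad G=C_4E^{\frac{m}{1+s}}.
$$
Tracking the scaling $w\mapsto t\,w$ in the singular $Tv$-equation exactly as in Step~2 of the proof of Theorem \ref{th2}, three of the four inclusions defining $\mathcal{H}(\mathcal{A})\subseteq\mathcal{A}$ hold by construction, and the inclusion reduces to the single inequality $C_4\big(G^qD^{-p}+C_2\lambda\big)\le E$, i.e. $G^qD^{-p}\le C_2\lambda$, i.e. $\lambda^{\frac{mq}{1+s}-p-1}\le\mathrm{const}$; this holds for all $0<\lambda\le\lambda^*$ with $\lambda^*>0$ small, since \eqref{copt} yields $\frac{mq}{1+s}>p+1$ — indeed $mq>(s+\frac{N}{N-2})(p+\frac{N}{N-2})$ and a direct expansion gives $(s+\frac{N}{N-2})(p+\frac{N}{N-2})-(p+1)(1+s)=\frac{2(p+s)}{N-2}+\frac{4(N-1)}{(N-2)^2}>0$.

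Since $\R^N\setminus K$ is unbounded, $\mathcal{A}$ carries no compactness, so — as in Step~3 of the proof of Theorem \ref{th2} — one works on the truncations $\mathcal{A}_n$ over $B_n\setminus K$, obtains by the Schauder fixed point theorem a fixed point $(u_n,v_n)$ of the corresponding compact map $\mathcal{H}_n$, and passes to the limit through a diagonal argument and elliptic regularity; the limit $(u,v)\in\mathcal{A}$ solves \eqref{GMe3}, satisfies \eqref{uvmin}, and lies in $C^{2,\gamma}_{loc}(\overline{\R^N\setminus K})\times C^{2,\gamma}_{loc}(\overline{\R^N\setminus K})$ by Schauder estimates. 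The delicate point of the whole argument is precisely this closure $\mathcal{H}(\mathcal{A})\subseteq\mathcal{A}$: the constants $D,E,F,G$ and the threshold $\lambda^*$ must be chosen compatibly and the algebraic inequality above verified. Everything reduces to the elementary inequality $\frac{mq}{1+s}>p+1$, which \eqref{copt} guarantees; once this is secured, the well-definedness of $\mathcal{H}$, the Schauder step, and the diagonal limiting procedure are routine repetitions of the arguments in Sections~5 and~6.
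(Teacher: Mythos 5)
Your proof is correct and follows essentially the same strategy as the paper's: necessity via the asymptotics of Lemmas \ref{a3} and \ref{a4} (the paper's own argument treats the $u$-equation as the singular inequality $-\Delta u\geq v^qu^{-p}$ and invokes Lemma \ref{a3} directly, whereas you treat $-\Delta u=\widetilde h$ with the right-hand side having prescribed asymptotics and apply Lemma \ref{a4}; both routes are valid and rest on the same comparison machinery), and sufficiency via the same fixed-point scheme with the same box $\mathcal{A}$, the same scaled constants $D,E,F,G$, the same single closing inequality $C_4(G^qD^{-p}+C_2\lambda)\le E$, and the same elementary verification that \eqref{copt} forces $\frac{mq}{1+s}>p+1$.
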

\begin{proof} Let us first assume that $(u,v)$ is a positive solution of \eqref{GMe3} which satisfies \eqref{uvmin}. Then, by Theorem \ref{the1} we have $q(N-2)>2$ and $m(N-2)>2$. Also, 
$$
-\Delta u\geq \lambda \rho(x)\geq \lambda C_2|x|^{-k}\quad\mbox{ in }\R^N\setminus K.
$$
By \eqref{uvmin} and the asymptotic \eqref{corw2} in Lemma \ref{a4} it follows that $k>N$. 
Now, the first equation of \eqref{GMe3} together with \eqref{uvmin} yield 
$$
-\Delta u\geq \phi(x) u^{-p}\quad\mbox{  in } \R^N\setminus K,
$$ 
where $\phi(x)=v(x)^q\simeq |x|^{-q(N-2)}$ in $\R^N\setminus K$.  If $q(N-2)\leq N+p(N-2)$, then by the asymptotic behaviour \eqref{lueq1} in Lemma \ref{a3} together with the comparison principle in Lemma \ref{com2} we find
$$
v(x)\geq c
\begin{cases}
|x|^{-\frac{q(N-2)-2}{1+p}} &\quad\mbox{ if } 2<q(N-2)<N+p(N-2)\\[0.1in]
|x|^{2-N}\log^{\frac{1}{1+p}} &\quad\mbox{ if } q(N-2)=N+p(N-2)\\[0.1in]
\end{cases}
\quad\mbox{ in }\R^N\setminus K.
$$
This contradicts the asymptotic \eqref{uvmin}. Hence, $q(N-2)>N+p(N-2)$ which yields  $q>p+\frac{N}{N-2}$. Similarly, $v$ satisfies 
$$
-\Delta v=\psi(x) v^{-s}\quad\mbox{ in } \R^N\setminus K,
$$ 
where $\psi(x)=u(x)^m\simeq |x|^{-m(N-2)}$ in $\R^N\setminus K$. With a similar argument we deduce $m>s+\frac{N}{N-2}$.

\medskip

Conversely, assume now that $p,q,m,s,k$ satisfy \eqref{copt}. The following result is the counterpart of Lemma \ref{cc} which suits our system \eqref{GMe3}.

\begin{lemma}\label{cc1}
Assume \eqref{copt} holds. Then, there exist two positive constants $C_6>C_5>0$ depending on $N,p,q,m,s$ (but not on $\lambda$) such that:
\begin{enumerate}
\item[{\rm (i)}]  Any function $w\in C^2(\overline{\R^N\setminus K})$ which is a solution of 
$$
\begin{cases}
-\Delta w  \geq   |x|^{-m(N-2)}   w^{-s} \, , w>0 & \quad\mbox{ in }\R^N\setminus K , \\[0.2cm]
\ds \;\;\; \frac{\partial w}{\partial \nu}  =0  & \quad\mbox{ on }\partial K , \\[0.2cm]
\;\;\; w(x)\to 0 &  \quad\mbox{ as }|x|\to \infty,
\end{cases}
$$
satisfies $w(x)\geq C_5 |x|^{2-N} $ in $\R^N\setminus K$.
\item[{\rm (ii)}] Any function $w\in C^2(\overline{\R^N\setminus K})$ which is a solution of 
$$
\begin{cases}
-\Delta w  \leq   |x|^{-m(N-2)}   w^{-s} \, , w>0 & \quad\mbox{ in }\R^N\setminus K , \\[0.2cm]
\ds \;\;\; \frac{\partial w}{\partial \nu}  =0  & \quad\mbox{ on }\partial K , \\[0.2cm]
\;\;\; w(x)\to 0 &  \quad\mbox{ as }|x|\to \infty,
\end{cases}
$$
satisfies $w(x)\leq C_6 |x|^{2-N}$ in $\R^N\setminus K$.
\item[{\rm (iii)}] Any function $w\in C^2(\overline{\R^N\setminus K})$ which is a solution of 
$$
\begin{cases}
-\Delta w  \geq   |x|^{-k} \, , w>0 & \quad\mbox{ in }\R^N\setminus K , \\[0.2cm]
\ds \;\;\; \frac{\partial w}{\partial \nu}  =0  & \quad\mbox{ on }\partial K , \\[0.2cm]
\;\;\; w(x)\to 0 &  \quad\mbox{ as }|x|\to \infty,
\end{cases}
$$
satisfies $w(x)\geq C_3 |x|^{2-N}$ in $\R^N\setminus K$.
\item[{\rm (iv)}] Any function $w\in C^2(\overline{\R^N\setminus K})$ which is a solution of 
$$
\begin{cases}
-\Delta w  \leq  |x|^{-(q-p)(N-2)} +|x|^{-k} \, , w>0 & \quad\mbox{ in }\R^N\setminus K , \\[0.2cm]
\ds \;\;\; \frac{\partial w}{\partial \nu}  =0  & \quad\mbox{ on }\partial K , \\[0.2cm]
\;\;\; w(x)\to 0 &  \quad\mbox{ as }|x|\to \infty,
\end{cases}
$$
satisfies $w(x)\leq C_6 |x|^{2-N}$ in $\R^N\setminus K$.
\end{enumerate}
\end{lemma}
The proof of Lemma \ref{cc1} is similar to that of Lemma \ref{cc} and will be omitted here.

Let $C_2>C_1>0$ be the constants from \eqref{ro} and $C_6>C_5>0$ be the constants introduced in Lemma \ref{cc1}. Define
\begin{equation}\label{lac}
D=C_1C_5 \lambda \, ,\quad E= 2C_2C_6 \lambda   \, ,\quad F=D^{\frac{m}{1+s}} C_5 \, ,\quad G=E^{\frac{m}{1+s}} C_6.
\end{equation}
Then $D<E$ and $F<G$. Define next 
$$
\mathcal{A}=\Bigg\{(u, v)\in C^{0,\gamma}_{loc}(\overline{\R^N\setminus K})\times C^{0,\gamma}_{loc}(\overline{\R^N\setminus K}): \;
\begin{aligned}
D|x|^{2-N}&\leq u(x)\leq E |x|^{2-N}\\
F |x|^{2-N} &\leq v(x)\leq G |x|^{2-N}
\end{aligned} \;\mbox{ in }\R^N\setminus K\Bigg\}.
$$
For any $(u,v)\in\mathcal{A}$, let $(Tu, Tv)$ be the unique solution of 
$$
\begin{cases}
\displaystyle   -\Delta (Tu)=\frac{v^q}{u^p}+\lambda \rho(x) \,, Tu>0 &\quad\mbox{ in }\R^N\setminus K,\\[0.1in]
\displaystyle   -\Delta (Tv)=u^m (Tv)^{-s}  \,, Tv>0 &\quad\mbox{ in }\R^N\setminus K,\\[0.1in]
\displaystyle \;\;\; \frac{\partial (Tu)}{\partial \nu}=\frac{\partial (Tv)}{\partial \nu}=0 &\quad\mbox{ on }\partial K,\\[0.1in]
\displaystyle \;\;\;  Tu(x), Tv(x)\to 0 &\quad\mbox{ as }|x|\to \infty.
\end{cases}
$$
We further define
$$
\mathcal{H}:\mathcal{A}\to C(\overline{\R^N\setminus K})\times C(\overline{\R^N\setminus K})\,, \quad \mathcal{H}[u,v]=(Tu, Tv).
$$
Clearly, any fixed point of $\mathcal{H}$ is a solution to \eqref{GMe3}. Let 
$$
\lambda^{**}:=\left[\frac{C_2(C_1C_5)^p}{(2C_2C_6)^{\frac{mq}{1+s}}C_6^q} \right]^{\frac{1}{\frac{mq}{1+s}-(p+1)}}.
$$
From the definition of $D,E,F,G$ in \eqref{lac} we can check that for all $0<\lambda\leq \lambda^{**}$ we have
$$
C_6 (G^qD^{-p}+C_2\lambda)\leq E,
$$
which is the counterpart of \eqref{la3}. From now on, the approach follows the same steps as in the proof of Theorem \ref{th2} to deduce $\mathcal{H}(\mathcal{A})\subset \mathcal{A}$ and to obtain a solution to \eqref{GMe3}.
\end{proof}

\section*{Data availability statement}

No new data were created or analysed in this study.

\section*{Acknowledgements}

This publication has emanated from research conducted with the financial support of Science Foundation Ireland under Grant number 18/CRT/6049.

\section*{ORCID iDs}

\noindent Marius Ghergu: https://orcid.org/0000-0001-9104-5295

\noindent Jack McNichholl: https://orcid.org/0009-0002-3922-6130

\end{document}